\documentclass[english]{amsart}
\usepackage[utf8]{inputenc}
\usepackage{geometry}
\geometry{verbose,lmargin=3cm,rmargin=3cm}
\usepackage{babel}
\usepackage{mathtools}
\usepackage{multirow}
\usepackage{amstext}
\usepackage{amsthm}
\usepackage{amssymb}
\usepackage[unicode=true,pdfusetitle, 
bookmarks=true,bookmarksnumbered=false,bookmarksopen=false,
 breaklinks=false,pdfborder={0 0 1},backref=false,colorlinks=false]
 {hyperref}
 \usepackage{cleveref}
\makeatletter
\numberwithin{equation}{section}
\numberwithin{figure}{section}
\theoremstyle{plain}
\newtheorem{theorem}{\protect\theoremname}[section]
\theoremstyle{plain}
\newtheorem{lemma}[theorem]{\protect\lemmaname}
\theoremstyle{plain}
\newtheorem{cor}[theorem]{\protect\corollaryname}
\theoremstyle{plain}

\theoremstyle{plain}

\newtheorem{question}[theorem]{\protect\questionname}
\theoremstyle{plain}
\newtheorem{proposition}[theorem]{\protect\propositionname}
\theoremstyle{definition}
\newtheorem{definition}[theorem]{\protect\definitionname}
\theoremstyle{remark}

\theoremstyle{definition}
\newtheorem{example}[theorem]{\protect\examplename}
\newtheorem{thmx}{Theorem}

\usepackage{subcaption}
\usepackage{comment}
\usepackage{graphicx}
\usepackage{xcolor}
\usepackage{tikz}
\usepackage{pgfplots}
\usetikzlibrary{positioning}

\global\long\def\R{\mathbb{R}}%
\global\long\def\C{\mathbb{C}}%
\global\long\def\K{\mathbb{K}}%
\global\long\def\Z{\mathbb{Z}}%
\global\long\def\N{\mathbb{N}}%
\global\long\def\norm#1#2{\left\Vert #1\right\Vert _{#2}}%
\global\long\def\G{\mathbb{G}}%
\global\long\def\H{\mathbb{H}}%
\global\long\def\K{\mathbb{K}}%
\def \FF{\mathcal F}

\global\long\def\define#1{\textit{#1}}
\global\long\def\BS{\mathrm{BS}}
\makeatother

\global\long\def\Cay{\mathrm{Cay}}%
\global\long\def\Sch{\mathrm{Sch}}
\global\long\def\Stab{\mathrm{Stab}}

\providecommand{\corollaryname}{Corollary}
\providecommand{\definitionname}{Definition}
\providecommand{\examplename}{Example}
\providecommand{\lemmaname}{Lemma}
\providecommand{\propositionname}{Proposition}
\providecommand{\remarkname}{Remark}
\providecommand{\theoremname}{Theorem}
\providecommand{\questionname}{Question}

\usepackage{subcaption}
\usepackage{graphicx}
\usepackage{xcolor}
\usepackage{tikz}

\usetikzlibrary{shapes,arrows,automata,matrix,fit,patterns,positioning}
\usepackage{pgfplots}
\usepackage{stmaryrd}

\makeatother

\providecommand{\conjecturename}{Conjecture}
\providecommand{\corollaryname}{Corollary}
\providecommand{\definitionname}{Definition}
\providecommand{\examplename}{Example}
\providecommand{\lemmaname}{Lemma}
\providecommand{\propositionname}{Proposition}
\providecommand{\remarkname}{Remark}
\providecommand{\theoremname}{Theorem}

\usepackage{marvosym,fancybox}

\newcommand{\xConfig}[2]{%
	\begin{tikzpicture}[
		baseline=-\the\dimexpr\fontdimen22\textfont2\relax,ampersand replacement=\&]
		\matrix[
			matrix of math nodes,
			nodes={
				minimum size=1.2ex,text width=1.2ex,
				text height=1.2ex,inner sep=3pt,draw={gray!20},align=center,
				anchor=base
			}, row sep=1pt,column sep=1pt
		] (config) {#1};
		\node[draw,rectangle,dashed,help lines,fit=(config), inner sep=0.5ex] {};
		#2
	\end{tikzpicture}
}

\begin{document}

\title[Obstructions to self-simulability]{A geometric obstruction to self-simulation for groups}

\author{
	Sebasti\'an Barbieri, Kan\'eda Blot, Mathieu Sablik and Ville Salo
}

\newcommand{\Addresses}{{
		\bigskip

		\hskip-\parindent   S.~Barbieri, \textsc{DMCC, Universidad de Santiago de Chile.}\par\nopagebreak
		\textit{E-mail address}: \texttt{sebastian.barbieri@usach.cl}

        \medskip
		
		\hskip-\parindent   K.~Blot, \textsc{DER de math\'ematiques, ENS Paris-Saclay.}\par\nopagebreak
		\textit{E-mail address}: \texttt{caneda.blot@ens-paris-saclay.fr}
		
		\medskip
		
		\hskip-\parindent   M.~Sablik, \textsc{Universit\'e Paul Sabatier.}\par\nopagebreak
		\textit{E-mail address}: \texttt{mathieu.sablik@math.univ-toulouse.fr}
		
		\medskip
		
		\hskip-\parindent   V.~Salo, \textsc{University of Turku.}\par\nopagebreak
		\textit{E-mail address}: \texttt{vosalo@utu.fi}
}}

\begin{abstract}
We introduce a new quasi-isometry invariant for finitely generated groups and show that every group with this property admits a subshift which is effectively closed by patterns and that cannot be realized as the topological factor of any subshift of finite type. We provide several examples of groups with the property, such as amenable groups, multi-ended groups, generalized Baumslag-Solitar groups, fundamental groups of surfaces, and cocompact Fuchsian groups.
\medskip
		
		\noindent
		\emph{Keywords}: Symbolic dynamics, self-simulation, quasi-isometry, effective dynamics.
		
		\noindent
		\emph{MSC2020}: \textit{Primary:} 37B10. 
		\textit{Secondary:} 37B05,  
                20F65, 
                20F10. 

\end{abstract}

\maketitle
\tableofcontents
\section{Introduction}

Given a countable group $\G$ and a finite set $\Sigma$, a subshift is a closed subset of $\Sigma^\G$ under the prodiscrete topology that is invariant under the shift $\G$-action. Subshifts of finite type (SFTs) are an important class of subshifts which are defined through a finite number of local rules. Following the celebrated result of Hochman~\cite{Hochman-2009} that states that any computable $\Z^d$-action can be obtained as a factor of a subaction of a $\Z^{d+2}$-SFT, a number of results in the literature~\cite{Aubrun-Sablik-2010,Durand-Romashchenko-Shen-2010,Hochman-Meyerovitch-2010,Bar_2019_geometricsimulation,BarSabSal2025s-selfsimulablegroups} have explored how far are SFTs and their symbolic factors, called sofic subshifts, of encoding all computable dynamics of a given group.

In the setting of expansive actions, it is natural to ask which dynamics can be realized by sofic subshifts. One natural computability obstruction (for finitely generated and recursively presented groups) is that sofic subshifts are effective; that is, there exists a description of the subshift through a set of forbidden patterns that can be enumerated algorithmically. A series of studies have explored which dynamical properties of an effective $\Z^d$-subshift actually force it to become sofic, for instance, combinatorial constraints from $\Pi_1^0$-sets~\cite{Westrick-2017}, or very low pattern complexity~\cite{destombes2023algorithmiccomplexitysoficnessshifts}.
 
 Conversely, it is natural to explore which effective subshifts are not sofic. To the best of our knowledge, the most well-known example of an effective $\Z^2$-subshift that is not sofic is the mirror subshift described in~\cite{Hoch_chapter_CWSD}. This is the space of all $\Z^2$ configurations over $\{0,1,\star\}$ with the property that if one $\star$ occurs, it extends vertically and imposes a mirror symmetry on the remainder of the configuration, see~\Cref{fig:mirror}. The proof of non-soficity is based on a combinatorial argument that heavily relies in the amenability of the underlying group.

   \begin{figure}[ht!]
     \centering
     \begin{tikzpicture}[scale = 0.4]
         \foreach \i in {-1,...,11}{ 
            \foreach \j in {-7,...,7}{
                \draw[thick, fill=white] (\j,\i) rectangle (\j+1,\i+1);
                \node at (\j+0.5,\i+0.5) {$0$};
            }
         }
         
         \foreach \x/\y in{3/0,5/0,2/1,4/1,2/2,4/2,1/3,2/3,3/3,4/3,5/3,6/3,1/4,2/4,3/4,4/4,5/4,6/4,1/5,4/5,5/5,1/6,4/6,5/6,1/7,2/7,3/7,4/7,1/8,2/8,3/8,3/9,4/9,4/10}{
            \draw[thick, fill=black!30] (-\x,\y) rectangle (-\x+1,\y+1);
            \draw[thick, fill=black!30] (\x,\y) rectangle (\x+1,\y+1);
            \node at (\x+0.5,\y+0.5) {$1$};
            \node at (-\x+0.5,\y+0.5) {$1$};
        }
        \foreach \i in {-1,...,11}{ 
         \draw[thick, fill=black!10] (0,\i) rectangle (1,\i+1);
         \node at (0.5,\i+0.5) {$\star$};
         }
     \end{tikzpicture}
		 \caption{A portion of a configuration in the $\Z^2$-mirror shift.}
         \label{fig:mirror}
 \end{figure}
 
Variations of the argument used to prove non-soficity of the mirror shift have been used to demonstrate the non-soficity of a subshift corresponding to the superposition of Sturmian words of the same slope~\cite{Fernique-Sablik-2019} or minimal effective subshifts which have high entropy dimension~\cite{Gangloff-2022}. In the case of $\Z^d$, other sufficient conditions for non-soficity can be expressed in terms of extender sets~\cite{Kass-Maden-2013} and Kolmogorov complexity~\cite{Destombes-Romashchenko-2022}.
  
 In the context of finitely generated groups with decidable word problem, the mirror subshift argument has been used to construct effective subshifts that are not sofic when the group is amenable or multi-ended~\cite{AubBarSab_effectiveness_2017}. Hence a natural question is whether such a construction exists in all groups or if there are groups where all effective subshifts are sofic. In~\cite{BarSabSal2025s-selfsimulablegroups} the authors introduce the class of \define{self-simulable} groups, in which every effective subshift is sofic and show that it is non-trivial. We remark that while the definition of self-simulable is actually more general, see~\Cref{def:selfsimulable}, for finitely generated and recursively presented groups it is equivalent to the statement that all effective subshifts are sofic.

 In~\cite{BarSabSal2025s-selfsimulablegroups} it was shown that the direct product of any two finitely generated nonamenable groups is self-simulable. This, along with some stability properties of the class of self-simulable groups, was used to produce a list of interesting examples such as $\operatorname{SL}_n(\Z)$ for $n \geq 5$ and Thompson's $V$. It was also shown that Thompson's group $F$ is self-simulable if and only if it is nonamenable. 
 
 Two properties of a finitely generated group with decidable word problem that provide obstructions to being self-simulable are amenability and the property of having multiple ends. However, it was also shown in~\cite{BarSabSal2025s-selfsimulablegroups} that there exist groups which are nonamenable and $1$-ended that are not self-simulable, such as $F_2 \times \Z$. 
 
 The main purpose of this article is to present a new quasi-isometry invariant for finitely generated groups which provides an obstruction to self-simulability. Furthermore, all examples we are aware of that are not self-simulable satisfy this property. We call groups which satisfy this property \define{extraterrestrial} because their Cayley graphs admit structures which roughly resemble flying saucers.

Let us be more precise. Consider an infinite graph $G$ of bounded degree and non-negative integers $m,k,r$. An $(m,k,r)$-UFO is a triple of sets of vertices $(U,F,O)$ of $G$ which satisfies $|U| \geq m|F|$, that $U$ and $O$ can be matched by paths of length at most $k$, and that all paths from $U$ to $O$ which avoid vertices in $F$ have at least length $r$. A graph is called extraterrestrial if for all $m$ there is some $k$ such that for all $r$ then $G$ admits an $(m,k,r)$-UFO. In~\Cref{fig:ufo_in_the_grid} we represent a UFO in a Cayley graph of $\Z^2$.

\begin{figure}[ht!]
    \centering
    \begin{tikzpicture}[scale = 0.4]
        \draw[black!50] (0,0) grid (21,1);
        \draw[black!50] (7,1) grid (14,4);
        \draw[black!50] (7,-3) grid (14,0);
        \draw[black, very thick, fill = red!20, opacity = 0.3 ] (0,0) rectangle (21,1);
        \draw[black, very thick] (0,0) rectangle (21,1);
        \draw[black, very thick, fill = blue!20, opacity = 0.3] (7,1) rectangle (14,4);
         \draw[black, very thick] (7,1) rectangle (14,4);
        \draw[black, very thick, fill = green!20, opacity = 0.3] (7,-3) rectangle (14,0);
         \draw[black, very thick] (7,-3) rectangle (14,0);
        \node at (10.5,0.5) {$F$};
        \node at (10.5,2.5) {$O$};
        \node at (10.5,-1.5) {$U$};
    \end{tikzpicture}
    \caption{A $(1,4,18)$-UFO for the canonical Cayley graph of $\Z^2$.}
    \label{fig:ufo_in_the_grid}
\end{figure}
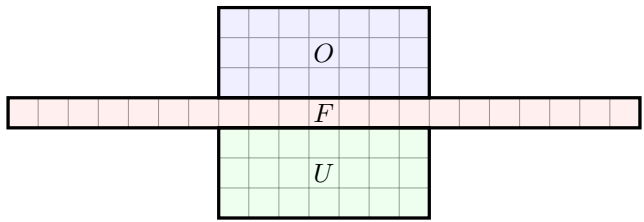

Intuitively, a graph $G$ is extraterrestrial if there are pairs of subsets of vertices $U,O$ of the same size, which are close (parametrized by $k$) in $G$, but that can be made to be arbitrarily far (parametrized by $r$) by removing a set of vertices $F$ which is small (parametrized by $m$) in comparison with $U$ and $O$.


Our first observation is that the property of being extraterrestrial is a geometric invariant:

\begin{thmx}\label{mainthm:property_ET_QI_invariant}
    Being extraterrestrial is a quasi-isometry invariant for graphs of bounded degree.
\end{thmx}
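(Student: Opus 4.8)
The plan is to prove the statement by transporting a UFO along a quasi-isometry. Since quasi-isometry is a symmetric relation — any quasi-isometry $\phi\colon G \to H$ admits a quasi-inverse $\bar\phi\colon H \to G$ which is again a quasi-isometry — it suffices to prove: if $G$ is extraterrestrial and $\phi\colon G \to H$ is a quasi-isometry between graphs of bounded degree, then $H$ is extraterrestrial. Fix quasi-isometry constants $\lambda \ge 1$, $C \ge 0$ for $\phi$ and $\lambda', C'$ for $\bar\phi$. Two consequences of bounded degree will be used throughout: (i) every fiber $\phi^{-1}(y)$ has at most $N$ elements, where $N$ bounds the number of vertices in a ball of radius $\lambda C$ in $G$ (indeed $\phi(x)=\phi(x')$ forces $d_G(x,x') \le \lambda C$); and (ii) for a fixed radius $D$, the $D$-neighborhood $N_D(F)$ of any set $F$ in $H$ has at most $M|F|$ vertices, where $M$ bounds the size of a ball of radius $D$ in $H$. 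The radius $D$ is a constant depending only on $\lambda, C, \lambda', C'$, chosen below.

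Given a target parameter $m'$, I would set $m := m' M (2N-1)$ and invoke extraterrestriality of $G$ to obtain the corresponding width $k$; the output width for $H$ is $k' := \lambda k + C$. For each $r'$ set $r := \lceil r'(\lambda'+C') + 2C' \rceil$, and let $(U,F,O)$ be an $(m,k,r)$-UFO in $G$ with matching $\psi\colon U \to O$, so $d_G(u,\psi(u)) \le k$. To build the UFO in $H$ I first correct for non-injectivity of $\phi$: form the auxiliary graph on $U$ joining $u_1 \ne u_2$ whenever $\phi(u_1)=\phi(u_2)$ or $\phi(\psi(u_1))=\phi(\psi(u_2))$; by (i) this graph has degree at most $2N-2$, hence admits an independent set $S$ with $|S| \ge |U|/(2N-1)$. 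Put $U' := \phi(S)$, $O' := \phi(\psi(S))$ and $F' := N_D(\phi(F))$. On $S$ both $\phi$ and $\phi\circ\psi$ are injective, so $\phi(s) \mapsto \phi(\psi(s))$ is a genuine bijection $U' \to O'$ realizing the matching with $d_H(\phi(s),\phi(\psi(s))) \le \lambda k + C = k'$, which is the matching condition. For the size condition, combine $|U'| = |S| \ge |U|/(2N-1) \ge m|F|/(2N-1)$ with $|F'| \le M|\phi(F)| \le M|F|$ to obtain $|U'| \ge m'|F'|$, as forced by the choice of $m$.

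The heart of the argument is the separation condition, which I would establish by contradiction. Suppose $y_0, \dots, y_\ell$ is a path in $H$ from $U'$ to $O'$ that avoids $F'$ and has length $\ell < r'$. Applying $\bar\phi$ and joining consecutive images $\bar\phi(y_i), \bar\phi(y_{i+1})$ (which lie at $G$-distance at most $\lambda'+C'$) by geodesics, then prepending and appending geodesics of length at most $C'$ to reach the preimages $s_0 \in U$, $t_0 \in O$ with $y_0 = \phi(s_0)$ and $y_\ell = \phi(t_0)$, yields a path in $G$ from $U$ to $O$ of length at most $\ell(\lambda'+C') + 2C' < r$. The crucial claim is that this path avoids $F$: if some $z \in F$ lay on it, then $z$ is within distance $\lambda'+C'$ of some $\bar\phi(y_i)$, whence $d_H(\phi(z), y_i) \le \lambda(\lambda'+C') + C + C' =: D$; since $\phi(z) \in \phi(F)$, this forces $y_i \in N_D(\phi(F)) = F'$, contradicting that the $H$-path avoids $F'$. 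Thus we have produced an $F$-avoiding path in $G$ from $U$ to $O$ of length strictly less than $r$, contradicting the defining property of the $(m,k,r)$-UFO. Hence every $F'$-avoiding path in $H$ from $U'$ to $O'$ has length at least $r'$, so $(U',F',O')$ is an $(m',k',r')$-UFO. Tracking the quantifiers, $k'$ depends only on $m'$ while $N, M, D, \lambda, C, \lambda', C'$ are independent of $r'$, so $H$ is extraterrestrial.

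I expect the main obstacle to be the separation condition, and specifically the tension between its two parameters there: the radius $D$ must be large enough that the thickening $F' = N_D(\phi(F))$ catches every vertex of $G$ a pulled-back path could traverse near $F$, yet $F'$ must inflate $|F|$ by only a bounded factor so that the size condition survives — this is precisely where the bounded degree hypothesis is indispensable. A secondary technical point is the passage to the independent set $S$, which is what turns the possibly non-injective pushforward of $\psi$ into an honest matching between sets of equal cardinality.
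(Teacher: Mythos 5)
Your proposal follows essentially the same route as the paper's proof: push the UFO forward along the quasi-isometry, thicken the image of $F$ by a constant radius determined by the quasi-isometry and degree constants, repair the non-injectivity of $\phi$ on the matched sets by a bounded-multiplicity argument, and pull an $F'$-avoiding path in $H$ back to an $F$-avoiding path in $G$ to establish the separation condition. The only cosmetic differences are that you invoke a quasi-inverse $\bar\phi$ where the paper uses relative density of $f(V)$ directly, and that you extract an independent set in an auxiliary graph of degree at most $2N-2$ where the paper takes a maximal $AB$-separated submatching (costing a factor $D^{2AB}$); these are interchangeable, and your independent-set bookkeeping is arguably cleaner. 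The quantifier tracking ($k'$ depending only on $m'$, with $r'$ free) is handled correctly, and the separation argument via the thickening radius $D = \lambda(\lambda'+C')+C+C'$ is sound.

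There is, however, one genuine omission: the definition of an $(m',k',r')$-UFO requires $U'$, $F'$, $O'$ to be \emph{pairwise disjoint}, and your triple need not be. A vertex $u \in U$ can be adjacent to $F$ in $G$, in which case $\phi(u) \in N_D(\phi(F)) = F'$, so $U' \cap F' \neq \varnothing$; similarly for $O' \cap F'$, and even $U' \cap O'$ can be nonempty, since if $u \in U$ and $o \in O$ are joined by a short path through $F$ then $\phi$ may identify them --- your auxiliary graph forbids only the collisions $\phi(u_1)=\phi(u_2)$ and $\phi(\psi(u_1))=\phi(\psi(u_2))$, not the cross-collisions $\phi(u_1)=\phi(\psi(u_2))$. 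The paper handles exactly this point by setting $U'' = f(U)\setminus F'$ and $O'' = f(O)\setminus F'$, keeping only matched pairs landing in $U''\times O''$, and paying for it with the estimate $|M'| \geq |M''| - 2|F'|$, which is then absorbed into the choice of $m$. Your argument admits the same patch: discard from $S$ the elements whose image under $\phi$ or $\phi\circ\psi$ lands in $F'$ (at most $2|F'|$ of them, each killing one matched pair), enlarge $D$ slightly so that for $r$ large the cross-collisions are also swallowed by $F'$ (harmless, since an $(m,k,r)$-UFO is an $(m,k,\tilde r)$-UFO for all $\tilde r \leq r$), and adjust $m$ accordingly. As written, though, the constructed triple can fail the disjointness clause of the definition, so this step needs the fix before the verification is complete.
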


In particular, given a finitely generated group $\G$, it follows that if for some finite generating set $S$ of $\G$ the Cayley graph $\Cay(\G,S)$ is extraterrestrial, then the property is preserved by changing the set of generators. In particular, being extraterrestrial is a property of the group.

Our main result is that being extraterrestrial is an obstruction to self-simulability. 

\begin{thmx}\label{mainthm:property_ET_groups_not_SS}
    Let $\G$ be a finitely generated extraterrestrial group, there exists a $\G$-subshift which is effectively closed by patterns and which is not the topological factor of any $\G$-SFT.
\end{thmx}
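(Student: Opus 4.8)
The plan is to transpose, into the abstract language of UFOs, the classical proof that the $\Z^2$ mirror shift is effectively closed by patterns but not sofic, replacing ``reflection across a wall'' by a ``copy across a bottleneck'' keyed to the set $F$. First I would fix a finite symmetric generating set $S$; by \Cref{mainthm:property_ET_QI_invariant} the choice is immaterial, so I work in $\Cay(\G,S)$, a graph of bounded degree $\Delta=|S|$.

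\emph{The construction.} I would build $X$ over an alphabet $\{0,1\}\times\mathcal{C}$, where the control layer $\mathcal{C}$ lets a configuration mark a separating structure (a ``wall''), while the $\{0,1\}$-layer carries data that is free on one side of the marked wall and forced to equal a copy on the other side. The decisive design requirement -- the one on which everything hinges -- is that the copy be \emph{action at a distance}: it must be detectable only by reading patterns of unbounded diameter, with no bounded-width channel physically transmitting the bit across the wall. This is exactly the mechanism of the mirror shift, where $\mathrm{content}(x,y)=\mathrm{content}(-x,y)$ is enforced by forbidding, for each $n$, the finitely many asymmetric patterns of width $2n+1$ centred on the wall. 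Channel-freeness buys two things at once: it lets $X$ realize all $2^{|U|}$ choices of the free data without having to route them through any small region, and it guarantees that no finite-window cover can maintain the copy locally. I would then verify that $X$ is effectively closed by patterns by exhibiting the recursively enumerable family of forbidden patterns, namely the finitely supported control-layer inconsistencies together with, for each finite support, the patterns displaying a marked wall and a copy mismatch.

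\emph{Non-soficity.} Suppose for contradiction that $X=\pi(Y)$ with $Y$ an SFT over an alphabet $B$ and $\pi$ a sliding-block code, both of window radius $w$. Set $C=8\Delta^{2w+1}\log_2|B|$, a constant depending only on $Y$ and the graph. Since $\G$ is extraterrestrial, apply the property to $m>C$ to obtain $k$, and then, choosing any $r>2w$, a UFO $(U,F,O)$ with $|U|\ge m|F|$. Two geometric facts drive the count. First, matching by paths of length $\le k$ forces $U,O\subseteq N_k(F)$, and since $|N_{2w}(F)|\le\Delta^{2w+1}|F|$ while $|U|\ge m|F|$ with $m$ large, a subset $U_{\mathrm{good}}\subseteq U$ with $|U_{\mathrm{good}}|\ge|U|/2$ sits at distance $>2w$ from $F$ and is matched to points of $O$ also at distance $>2w$ from $F$. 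Second, because every $U$--$O$ path avoiding $F$ has length $\ge r>2w$, the set $D=N_{2w}(F)$ separates these far points in the strong sense needed for gluing: no ball of radius $w$ can meet both sides outside $D$, since it would furnish an $F$-avoiding path of length $\le 2w<r$. Now take configurations of $X$ that mark this wall and range over the $2^{|U_{\mathrm{good}}|}$ free choices of $\{0,1\}$-data on $U_{\mathrm{good}}$, the partners being forced by the copy, and lift them to $Y$. If two lifts agreed on $D$, splicing the $U$-side of one with the $O$-side of the other would produce a valid point of $Y$ whose image still marks the wall yet exhibits a copy mismatch -- an unbounded forbidden pattern of $X$ that is invisible to every radius-$w$ window of $Y$ -- contradicting $\pi(Y)\subseteq X$. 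Hence the lifts are pairwise distinct on $D$, so $2^{|U_{\mathrm{good}}|}\le|B|^{|D|}\le|B|^{\Delta^{2w+1}|F|}$, whence $|U|\le(C/4)|F|$, contradicting $|U|\ge m|F|>C|F|$. Therefore $X$ is not sofic.

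\emph{The main obstacle.} The delicate part is the construction, not the counting. I must realize the channel-free copy uniformly in an arbitrary extraterrestrial group, which offers no reflection or product structure to imitate the mirror: the marked wall $F$ must canonically determine a correspondence between its two sides that is simultaneously enforceable by a recursively enumerable set of patterns, admits no bounded-radius transmission (so that gluing genuinely breaks it), and leaves the $\{0,1\}$-data on one side free so that the $2^{|U_{\mathrm{good}}|}$ configurations are actually present in $X$ for \emph{every} UFO. Manufacturing such a rich, free, yet locally untransmittable copy from nothing but the abstract separating set $F$ -- and checking that the control layer can mark the $F$ arising from an arbitrary UFO as a wall of the required type -- is where I expect the real work of the proof to lie.
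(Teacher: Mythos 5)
Your counting framework is essentially the paper's (the paper recodes to an $S$-nearest-neighbor SFT with a $1$-block factor map and takes $m>2\log_2 q$, where you keep a window radius $w$ and take $m>C$; both exploit $2^{\Theta(|U|)}>q^{|F|}$ to find two lifts agreeing on the bottleneck). But the construction you explicitly defer in your last paragraph is the actual substance of the theorem, and the paper resolves your ``main obstacle'' with two devices absent from your sketch. First, no wall is ever marked: the generalized mirror shift $X_{\texttt{GM}}$ uses symbols $\{\star,\mathfrak{u},\mathfrak{o}\}\times\{-,+\}$, and the copy correspondence is \emph{not} keyed to $F$ at all. Instead it is a canonical greedy matching, built deterministically from a lexicographic order on $S^*$, pairing $\mathfrak{u}$-sites to $\mathfrak{o}$-sites by paths of length at most $k$ (note how this uses the quantifier order of the extraterrestrial property: $k$ is fixed before $r$); matched sites must carry equal $\pm$ bits. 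Freeness of the data then comes for free: greedy maximality gives $2|M_{r,k}|\geq |U_r|$, hence at least $2^{|U_r|/2}$ valid configurations, with no need to ``mark the $F$ arising from an arbitrary UFO.'' Second, effectiveness for arbitrary finitely generated $\G$ (possibly not recursively presented) cannot be handled by ``exhibiting the r.e.\ family of forbidden patterns,'' because equality of words in $\G$ is then undecidable and one cannot even recognize which pattern codings describe the same cells. The paper states all rules over the free group $F(S)$ and adds an auxiliary $\{0,1\}$-layer with a \emph{coherence rule}; \Cref{lemma:distinct_nbhds} supplies a configuration $\xi$ whose translates distinguish elements at linearly bounded radius, so this layer consistently encodes the word problem, and \Cref{prop_caractES} pushes the effective $F(S)$-subshift forward to an effectively-closed-by-patterns $\G$-subshift. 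Your proposal contains no analogue of either device.

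There is also a step in your non-soficity argument that would fail as written: the splicing at a finite stage with $r>2w$. A UFO guarantees only that every $U$--$O$ path avoiding $F$ has length at least $r$; it does \emph{not} guarantee that $U$ and $O$ lie in different connected components of $\Cay(\G,S)$ after removing $F$ (or your thickening $D=N_{2w}(F)$). Consequently, after splicing along the agreement set, a matched pair $(g,g')$ may well lie in the \emph{same} component, connected around the wall by a path of length $\geq r$; then the spliced configuration splits no matched pair and yields no forbidden mismatch, so no contradiction arises, no matter how large $r$ is chosen relative to $w$. The paper repairs exactly this with a compactness step: it shifts the pairs so the distinguished matched pair sits in $B_k$, takes a limit point $(y,y')$ of $(y_r,y'_r)$ as $r\to\infty$, defines $F$ as the full agreement set of $y$ and $y'$ on $\star$-positions, and shows any finite path from $\mathcal{U}(\phi(y))$ to $\mathcal{O}(\phi(y))$ avoiding this $F$ would, for large $r$, produce a path of bounded length $t$ avoiding $F_r$, contradicting the $(m,k,r)$-UFO property once $r>t$. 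Without this limit argument (or some other mechanism converting ``arbitrarily far'' into ``genuinely disconnected''), your contradiction does not close.
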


Intuitively, a subshift is effectively closed by patterns if it can be described by a list of forbidden cylinders that are produced by a Turing machine. In the case where $\G$ is recursively presented, every subshift which is effectively closed by patterns is topologically conjugate to an expansive action by computable maps on a $\Pi_1^0$ subset of the Cantor space, thus in particular these groups cannot be self-simulable. In the case of groups which are non-recursively presented, this provides an obstruction to the natural relativization of self-simulability, which we call strong self-simulability, see~\Cref{subsec_computability_shift}.

The proof of~\Cref{mainthm:property_ET_groups_not_SS} relies on a construction that in a very abstract way generalizes the mirror shift. More precisely, we replace the algebraic global symmetry with respect to mirror symbols by global symmetry through matchings in the group induced by a lexicographic ordering. The property of being extraterrestrial provides the precise geometrical structure in the group that makes this construction work.

The last part of the article is dedicated to producing examples of extraterrestrial groups and tools to prove that a group is extraterrestrial. The following theorem summarizes the most relevant examples.

\begin{thmx}\label{mainthm:groups-with-ET}
    The following classes of finitely generated groups are extraterrestrial:
    \begin{itemize}
        \item Infinite amenable groups (\Cref{prop:AmenableET}).
        \item Multi-ended groups (\Cref{prop:MultiEndedET}).
        \item Amalgamated free products relative to an amenable subgroup with strict embeddings (\Cref{prop:amalgamated_free_ET}).
        \item HNN-extensions relative to an amenable subgroup (\Cref{prop:HNN_ET}).
        \item Generalized Baumslag-Solitar groups (\Cref{cor:BS_has_ET}).
        \item Cocompact Fuchsian groups (\Cref{cor:Fuchsian_ET}).
        \item Fundamental groups of surfaces (\Cref{cor:surfaces}).
    \end{itemize}
\end{thmx}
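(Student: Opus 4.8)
The statement is a compendium, so I would prove each class in turn, but arrange the argument so that two base constructions and one combination tool feed all the rest, with \Cref{mainthm:property_ET_QI_invariant} used freely to pass to a convenient Cayley graph. The guiding picture throughout is the UFO of \Cref{fig:ufo_in_the_grid}: a \emph{thin} set $F$ with two \emph{thick} sides $U,O$ that are matched across $F$ in a bounded number $k$ of steps (the matching paths are allowed to pass through $F$), while every $F$-avoiding connection either does not exist or must detour around the ends of $F$ and so has length $\ge r$. In every class the goal is reduced to producing, for each $m$, one width $k=k(m)$ and, for each $r$, such a configuration with $|U|\ge m|F|$.

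For the two base cases I would argue directly. For an infinite amenable group I surround a large Følner set $A$ by its boundary $F=\partial A$; since $F$ separates the finite region $A$ from its infinite complement, $U$ and $O$ land in different components, every $F$-avoiding path is infinite, and the condition on $r$ holds vacuously. Taking $U$ to be an inner collar of $A$ of width $k=k(m)$ and $O$ the matching outer collar, and matching each point of $U$ to the point of $O$ reached by crossing $F$, the isoperimetric smallness of $\partial A$ (for a suitable Følner set and a width $k$ chosen in terms of $m$) gives $|U|\ge m|F|$; ensuring the collar is thick enough relative to the boundary is the one delicate point, and is exactly where amenability enters. For multi-ended groups there is instead a genuine finite separator $F$ (a coset of the finite subgroup over which the group splits, by Stallings) whose removal leaves at least two infinite components; since each infinite component meets $B_k(F)$ in arbitrarily many vertices as $k$ grows, for $k=k(m)$ large enough one places $U$ and $O$ in two different components within $B_k(F)$ and matches them across $F$ by a Hall-type argument, the disconnection again making the $r$-condition automatic.

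The combination tool is the amalgamated/HNN case, and this is where I expect the real difficulty. For $\G=A\ast_C B$ (or the HNN analogue) with $C$ amenable and strictly embedded, the Bass–Serre tree forces every path between the two factors to pass through a coset of $C$, so such a coset plays the role of the wall. I would take $F$ to be a large Følner set of $C$, take $U$ and $O$ to be slabs inside the two factors sitting over the deep interior of $F$—thick because the embeddings are strict, so each factor has room transverse to $C$—and match them across $C$. Amenability of $C$ keeps $F$ thin relative to the slabs, while any $F$-avoiding path must recross $C$ only in $C\setminus F$, i.e. near $\partial F$, and therefore detours by at least $r$. The hard part is the metric control: one must show that reaching $C\setminus F$ from a slab genuinely costs $\ge r$ steps, with no shortcut through the ambient (nonamenable) group, and that the slab volumes dominate $m|F|$ uniformly. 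This is the technical core, and it is precisely what the strict-embedding and Følner hypotheses are designed to supply.

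The remaining classes then follow as deductions. Generalized Baumslag–Solitar groups are fundamental groups of graphs of groups with all edge and vertex groups infinite cyclic, hence amenable; those not already amenable carry a splitting over $\Z$ with a strict embedding and are covered by the amalgamated/HNN case. Fundamental groups of closed surfaces split over $\Z$—cutting along a simple closed curve gives an amalgamated product or an HNN extension over an infinite-index, hence strictly embedded, cyclic subgroup—with the torus handled by the amenable case. Finally, cocompact Fuchsian groups are quasi-isometric to surface groups (both to $\H^2$), so \Cref{mainthm:property_ET_QI_invariant} transfers the property directly; alternatively one uses their orbifold splittings over finite or cyclic, and hence amenable, subgroups. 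In each deduction quasi-isometry invariance also lets me replace the group by whichever Cayley graph makes the wall, the slabs, and the detour estimate easiest to verify.
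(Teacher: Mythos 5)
Your overall architecture (two base cases, one combination tool, then deductions) mirrors the paper's, and the base cases are essentially right, though over-engineered in a way that introduces an unjustified claim: your amenable argument needs a width-$k(m)$ inner collar with $|U|\geq m|F|$, which does not follow in general (layer sizes in a F{\o}lner set need not grow linearly in the width), and is also unnecessary. Since disconnection makes the $r$-condition vacuous, a \emph{single} UFO per $m$ suffices and $k$ may be taken as the maximal distance between $U$ and $O$; the paper simply takes $U$ to be the whole F{\o}lner set, $F=US\setminus U$, and $O$ an arbitrary equinumerous set in the infinite complement (\Cref{prop:AmenableET}). For the same reason, Stallings' theorem and the Hall-type matching argument in your multi-ended case are not needed: the definition of multi-endedness already supplies the finite separator, and any bijection between equinumerous sets is a matching by paths of length at most the maximal distance (\Cref{prop:MultiEndedET}).

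The genuine gap is exactly where you flag ``the technical core'': your slab construction for $\G_1\ast_{\H}\G_2$ does not work as stated. The claim that an $F$-avoiding path ``must recross $C$ only near $\partial F$, and therefore detours by at least $r$'' is false: elements of $C\setminus F$ can lie at \emph{bounded ambient distance} from the slabs no matter how large a F{\o}lner set $F\subset C$ you choose, because the Cayley-graph distance from a slab point to a fiber element is not governed by the intrinsic geometry of $C$. The missing device is the content of the paper's \Cref{thm:AmenableExtension}: one lifts a UFO $(U_r,F_r,O_r)$ from the Schreier graph $\Sch(\G,\H,S)$, defines the finite set $K_r\subset\H$ of ``heights'' $h$ with $d_S(u,hf)\leq r$ for some $u\in U'_r$, $f\in F'_r$, chooses a F{\o}lner set $T$ with $|TK_r\setminus T|\leq |T|$, and sets $U=TU'_r$, $O=TO'_r$, $F=TK_rF'_r$; the $K_r$-absorption is what guarantees that every ambient path of length at most $r$ from $U$ into the fiber over the separator actually lands in $F$, while left translation by $T$ (an isometry) preserves the bounded matching length $\kappa(2m)$ independently of $r$. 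With this transfer theorem the core becomes soft: one only needs multi-endedness of the Schreier graph, proved via normal forms for amalgams (\Cref{prop:amalgamated_free_ET}) and via Britton's lemma for HNN extensions (\Cref{prop:HNN_ET}) --- a case your proposal leaves entirely unaddressed (``or the HNN analogue''). Note also that \Cref{prop:HNN_ET} requires no properness of the embeddings, which matters for your generalized Baumslag--Solitar reduction, since such groups may only split as ascending HNN extensions where one embedding is onto. Your surface and Fuchsian deductions are viable alternatives to the paper's pentagon-model route (\Cref{lem:pentagon_is_ET} plus \Cref{mainthm:property_ET_QI_invariant}), but as written they rest on the unproven core.
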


In~\cite[Question 9.3]{BarSabSal2025s-selfsimulablegroups} we asked whether every $1$-ended hyperbolic group is self-simulable. The previous result answers this question negatively: the fundamental group of a closed orientable surface of genus $2$ is a $1$-ended hyperbolic group which is extraterrestrial and thus cannot be self-simulable.

The main tool used to provide the examples in~\Cref{mainthm:groups-with-ET} is that every group that admits an amenable subgroup whose associated Schreier graph is multi-ended is extraterrestrial (\Cref{thm:AmenableExtension}). We do not know any example of extraterrestrial group which does not fall into this case. We also do not know any example of finitely generated group which is not (strongly) self-simulable and fails to be extraterrestrial. 

\begin{question}
    Is there any group which is neither strongly self-simulable nor extraterrestrial?
\end{question}

\begin{question}
    Is there any group which is extraterrestrial but does not admit an amenable subgroup whose associated Schreier graph is multi-ended?
\end{question}



\textbf{Acknowledgments}:  S. Barbieri was supported by projects ANID FONDECYT regular 1240085, AMSUD240026 and ECOS230003.

\section{Preliminaries}

We shall use the notation $A\Subset B$ to denote that $A$ is a finite subset of $B$. We also use $A^*$ to denote the set of words on alphabet $A$.

Let $\G$ be a group with identity $1_{\G}$ and $S\Subset \G$ a symmetric set, that is, such that if $s\in S$ then $s^{-1}\in S$. The \define{word problem} of a group $\G$ with respect to $S \Subset \G$ is the language 
\[ \texttt{WP}_{S}(\G) = \{ w \in S^* :  \underline{w} = 1_{\G}   \} \]
where $\underline{w}$ denotes the element of $\G$ represented by the word $w$.

Now let $\G$ be a finitely generated group and $S\Subset G$ a finite generating symmetric set. We say that $\G$ is \define{recursively presented} (with respect to $S \Subset \G$) if $\texttt{WP}_{S}(\G)$ is recursively enumerable. A group is said to have \define{decidable word problem} (with respect to $S \Subset \G$) if $\texttt{WP}_{S}(\G)$ is decidable. These notions do not depend upon the choice of generating set $S$.

A \define{graph} is a pair $G = (V,E)$ where $V$ is a set of \define{vertices} and $E$ is a set of undirected \define{edges}, that is, of unordered pairs of elements of $V$. The degree $\mathrm{deg}(v)$ of a vertex $v\in V$ is the number of edges containing $v$. A graph has \define{bounded degree} if $\max_{v \in V}\mathrm{deg}(v)$ is finite.  A \define{path} of length $n \geq 0$ between $u$ and $v$ in $G$ is a finite sequence $w_0,\dots,w_n$ of distinct elements of $V$ such that $u=w_0$, $v=w_n$ and $\{w_i,w_{i+1}\}$ is an edge for all $i \in \{0,\dots,n-1\}$. We say $G$ is \define{connected} if there is path between every pair of $u,v\in V$. For a connected graph $G$ we denote by $d_G$ the metric on $V$ where $d_G(u,v)$ is the length of the shortest path from $u$ to $v$. If the graph is not connected and no such path exists between $u,v$ we set $d_G(u,v)=\infty$.

For a group $\G$ and a symmetric $S\Subset \G$, the \define{right Cayley graph} $\mathrm{Cay}(\G, S)$ is the graph where the vertices are $\G$ and there is an edge between $g$ and $g'$ if there exists $s\in S$ such that $g'=gs$. If $\G$ is generated by $S$ then $\mathrm{Cay}(\G, S)$ is a connected graph of bounded degree and we denote the metric $d_{\mathrm{Cay}(\G, S)}$ on $\G$ by $d_S$ instead. For an element $g \in G$ we denote $|g|_S = d_S(g,1_{\G})$ and for $k \in \N$ we write $B_k = \{g \in \G : |g|_S \leq k\}$ for the ball of radius $k$ on $\G$ under the metric $d_S$.

Similarly, for a subgroup $\H \leqslant \G$ and a symmetric $S\Subset \G$, the \define{right Schreier graph} $\Sch(\G,\H,S)$ is the graph where the vertices are $\{\H g, g \in \G\}$ and there is an edge between $\H g$ and $\H g'$ if there exists $s \in S$ such that $\H g' = \H g s$. It can equivalently be defined as the quotient graph of $\mathrm{Cay}(\G,S)$ by the left action of $\H$ by multiplication. 

\subsection{Shift spaces}

Let $\Sigma$ be a finite alphabet and $\G$ be a group. The full $\G$-shift is the set $\Sigma^\G = \{ x\colon \G \to \Sigma\}$ equipped with the left \define{shift} action $\G \curvearrowright \Sigma^{\G}$ by left multiplication given by 
\[ gx(h) = x(g^{-1}h) \qquad \mbox{  for every } g,h \in \G 
\mbox{ and } x \in \Sigma^\G.\]
The elements $a \in \Sigma$ and $x \in \Sigma^\G$ are called \define{symbols} and \define{configurations} respectively. We endow $\Sigma^\G$ with the prodiscrete topology generated by the clopen subbase given by the \define{cylinders} $[a]_g = \{x \in \Sigma^\G : x(g) = a\}$ where $g \in \G$. Given $F\Subset \G$, a \define{pattern} with support $F$ is an element $p \in \Sigma^F$. We denote the cylinder generated by $p$ by $[p] = \bigcap_{h \in F}[p(h)]_{h}$. 

In the case of a finitely generated group $\G$, it will be useful to encode patterns in a computable manner. Let $S$ be a symmetric set of generators and $\Sigma$ a finite alphabet. A \define{pattern coding} is a map $c\in \Sigma^W$ for some $W\Subset S^*$. The cylinder induced by $c$ is given by \[ [c] = \bigcap_{w \in W} [c(w)]_{\underline{w}}.  \]\
where $\underline{w}$ is the element of $\G$ represented by the word $w$. Notice that the cylinder associated to a pattern coding may be empty.

\begin{definition}
	A subset $X \subseteq \Sigma^\G$ is a \define{$\G$-subshift} if and only if it is $\G$-invariant and closed in the prodiscrete topology. 
\end{definition}

If the context is clear, we drop the $\G$ from the notation and speak plainly of a subshift. It follows directly from the definition that $X$ is a subshift if and only if there exists a set of patterns $\FF$ such that $X=X_{\FF}$, where \[X_\FF =  {\Sigma^\G \setminus \bigcup_{p \in \FF, g \in \G} g[p]}.\] 
If $X=X_{\FF}$, we say that $\FF$ is a set of \define{forbidden patterns} for $X$.

Let $X\subset (\Sigma_X)^{\G}$ and $Y\subset (\Sigma_Y)^{\G}$ be $\G$-subshifts. A \define{morphism} is a continuous function $\phi \colon X \to Y$ which is $\G$-equivariant, that is, $\phi(gx) = g \phi(x)$ for every $x \in X$ and $g \in \G$. A morphism is a \define{factor map} if the map is onto, and a \define{conjugacy} if it is bijective. Given a factor map $\phi\colon X \to Y$ we say that $Y$ is a factor of $X$, and that $X$ is an extension of $Y$.

Equivalently, by the Curtis-Lyndon-Hedlund theorem, see~\cite[Theorem 1.8.1]{ceccherini2010cellular}, a map $\phi\colon X \to Y$ is a morphism if and only if there exists $F\Subset \G$ and $\Phi \colon (\Sigma_X)^F \to \Sigma_Y$ such that $(\phi(x))(g) = \Phi(g^{-1}x|_F)$ for all $g \in \G$. We say that a morphism $\phi\colon X \to Y$ is a \define{$1$-block map} if there exists $\Phi\colon \Sigma_X \to \Sigma_Y$ such that $(\phi(x))(g) = \Phi(x(g))$ for every $g \in \G$ and $x\in X$. 

It well-known that a $\G$-action by homeomorphisms on a compact metrizable space $(X,d)$ of topological dimension zero is conjugate to some $\G$-subshift if and only if it is \define{expansive}, that is, there is a constant $C>0$ such that for every $x,y\in X$ then $x=y$ if and only if $d(gx,gy)<C$ for every $g \in \G$.

\begin{definition}
	Let $\G$ be a group and $S\Subset \G$ a symmetric set of generators of $\G$. We say that a subshift $X \subseteq \Sigma^\G$ is:
    \begin{enumerate}
    \item \define{$S$-nearest neighbor}, if $X = X_{\FF}$ for a finite set of forbidden patterns $\FF$. And the support of every pattern in $\FF$ is of the form $\{1_{\G},s\}$ for some $s \in S$. 
        \item a \define{subshift of finite type (SFT)}, if there exists a finite set of forbidden patterns $\FF$ such that $X = X_{\FF}$.
        \item a \define{sofic subshift}, if it is a factor of an SFT. 
    \end{enumerate}
    \end{definition}

We remark that all $S$-nearest neighbor subshifts are SFTs, and all SFTs are sofic subshifts. It is well known that every subshift of finite type is conjugate to an $S$-nearest neighbor subshift. Furthermore, if $X$ is a sofic subshift, it is always possible to construct an SFT $Y$ which is $S$-nearest neighbor and a $1$-block factor map $\phi$ such that $\phi(Y)= X$. For a proof of these facts, see~\cite{barbierilemp:tel-01563302}. 

\subsection{Computability of group actions}\label{subsec_computability_shift}

Next we will introduce notions of computability for group actions. We will assume the reader is acquainted with the basic concepts of computability, such as (oracle) Turing machines. An introduction can be found in~\cite{Cooper_2004_book}.

A subset $X\subset \{0,1\}^{\N}$ is called \define{effectively closed} or $\Pi_1^0$ if there exists a recursively enumerable language $L\subset \{0,1\}^*$ such that $X = \{0,1\}^{\N}\setminus \bigcup_{w \in L}[w]$, where $[w]$ denotes the cylinder set of all $x \in \{0,1\}^{\N}$ which begin by the word $w$. A map $f\colon X \to \{0,1\}^{\N}$ is  \define{computable} if there is an oracle Turing machine which on oracle $x \in X$ and input $n \in \N$ outputs $f(x)(n)$. An action of a finitely generated group $\G$ on a zero-dimensional topological space is called \define{effective} if it is topologically conjugate to a $\G$-action on an effectively closed set by computable maps.

\begin{definition}
    A $\G$-subshift is \define{effective} if it is conjugate to an (expansive) effective $\G$-action.
\end{definition}

It is often useful to have an explicit characterization of effective subshifts without passing through conjugacies. This can be done using pattern codings up to a small subtlety which we shall explain in a moment.

\begin{definition}
    A $\G$-subshift is {effectively closed by patterns}, if there exists a recursively enumerable set $\mathcal{C}$ of pattern codings such that $X = \Sigma^{\G}\setminus \bigcup_{c \in \mathcal{C}, g \in \G}g[c].$
\end{definition}

In the case where $\G$ is a recursively presented group, a $\G$-subshift is effectively closed by patterns if and only if it is an effective subshift~\cite[Corollary 7.7]{BarCarRoj2024} and in particular all sofic $\G$-subshifts are effective. Furthermore, if $\G$ has decidable word problem there is an enumeration of the group that makes the group operations computable and thus instead of considering pattern codings one can directly talk about recursively enumerable sets of forbidden patterns for the group.

\begin{definition}\label{def:selfsimulable}
    A finitely generated group $\G$ is \define{self-simulable} if every effective $\G$-action is the topological factor of a $\G$-SFT.
\end{definition}

It was shown in~\cite[Proposition 3.7]{BarSabSal2025s-selfsimulablegroups} that if $\G$ is a recursively presented self-simulable group, then $\G$ is nonamenable. Furthermore, if $\G$ is recursively presented and nonamenable, then $\G$ is self-simulable if and only if every effective $\G$-subshift is a factor of a $\G$-SFT~\cite[Theorem 4.6]{BarSabSal2025s-selfsimulablegroups}. Therefore for recursively presented groups being self-simulable coincides with the statement that the classes of sofic $\G$-subshifts and effective $\G$-subshifts coincide.

If $\G$ is not recursively presented, then for $|\Sigma|\geq 2$ even the full $\G$-shift $\Sigma^{\G}$ is non-effective, see~\cite[Proposition 7.9]{BarCarRoj2024}. It follows that the notion of self-simulation as stated above is not very meaningful beyond recursively presented groups. However, there is an easy way to ``fix'' the definition by relativizing the notion of an effective action.

Let $\G$ be a finitely generated group and $S$ a finite set of generators. Let $F(S)$ be the free group on basis $S$ and take $\psi\colon F(S)\to \G$ be the canonical epimorphism where for a reduced word $w \in F(S)$ we let $\psi(w)$ be the element obtained by multiplying the symbols of $w$ as elements in $\G$. We say that an action $\G\curvearrowright X$ is \define{relatively effective} if there exists an effective action $F(S)\curvearrowright Y$ such that if we let \[ X' = \{ y \in Y : gy=y \mbox{ for every } g \in \ker(\psi)\},   \]
then the actions $\G \curvearrowright X$ and $F(S)/\ker(\psi) \curvearrowright X'$ are topologically conjugate.

In the definition of relative effective action we have ``factored out'' the computational complexity of the word problem of $\G$. In fact, it is easy to see that for a recursively presented group $\G$ both notions coincide. It is natural then to consider self-simulation for non-recursively presented groups using this notion instead. This motivates the following definition.

\begin{definition}
    A finitely generated group $\G$ is \define{strongly self-simulable} if every relative effective $\G$-action is the factor of a $\G$-SFT.
\end{definition}

We remark that the main theorem of~\cite{BarSabSal2025s-selfsimulablegroups} which states that the direct product of any two finitely generated nonamenable groups is self-simulable, can be directly extended to show that in fact it is strongly self-simulable. In retrospective, we believe that strong self-simulability is the right notion for non-recursively presented groups, and thus in fact the right notion for all finitely generated groups.

The next proposition shows that $\G$-subshifts which are effectively closed by patterns are precisely the expansive relative effective $\G$-actions on spaces with zero topological dimension. Let $\G$ be finitely generated by $S$ and take $\psi\colon F(S) \to G$ the canonical epimorphism. Note that every $x \in A^{F(S)}$ such that $gx = x$ for every $g \in \operatorname{ker}(\psi)$ induces a configuration $\psi^*(x)\in A^{\G}$ given by \[ \psi^*(x)(\phi(g)) = x(g) \mbox{ for every } g \in \G. \]

\begin{proposition}\label{prop_caractES}
 A $\G$-subshift $X$ is effectively closed by patterns if and only if there exists an effective $F(S)$-subshift $Y$ such that \[ X = \{ \psi^*(y) : y \in A^{F(S)} \mbox{ and } gy=y \mbox{ for every } g \in \operatorname{ker}(\psi)\}. \]
\end{proposition}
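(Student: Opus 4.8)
The plan is to prove both directions by transporting the combinatorial data (pattern codings / forbidden patterns) back and forth across the canonical epimorphism $\psi\colon F(S)\to\G$, using the fact that the kernel $\ker(\psi)$ encodes exactly the word problem of $\G$. Throughout I would identify elements of $\G$ with $\psi$-images of reduced words, and note that a configuration $y\in A^{F(S)}$ satisfies $gy=y$ for all $g\in\ker(\psi)$ precisely when $y(w)$ depends only on $\psi(w)$; such $y$ are exactly those of the form $y=\psi^*(x)\circ(\text{labelling})$, i.e. they factor through $\G$, and $\psi^*$ gives the bijection between these $\ker(\psi)$-invariant configurations and configurations in $A^{\G}$. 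Let me call the set of such invariant configurations $Y_{\mathrm{inv}}$, so that $\psi^*\colon Y_{\mathrm{inv}}\to A^{\G}$ is a $\G$-equivariant homeomorphism.

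For the forward direction, suppose $X$ is effectively closed by patterns via a recursively enumerable set $\mathcal{C}$ of pattern codings. I would build $Y$ inside $A^{F(S)}$ as follows. First, the condition ``$gy=y$ for every $g\in\ker(\psi)$'' is itself effectively closed: it is equivalent to forbidding, for every pair of words $w,w'$ representing the same group element, all patterns on support $\{w,w'\}$ with distinct symbols; but since $\G$ need not be recursively presented I cannot enumerate these directly. This is where the relative formulation does the work: I set $Y$ to be the $F(S)$-subshift whose forbidden pattern codings are (i) the codings in $\mathcal{C}$, now read as pattern codings over $F(S)$ since $W\Subset S^*$ already lives in $F(S)$, together with (ii) a set encoding the invariance constraints, and I take $Y$ to be effective \emph{as an $F(S)$-subshift}, where $F(S)$ is free and hence recursively presented so that effectively closed by patterns coincides with effective by \cite[Corollary 7.7]{BarCarRoj2024}. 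The subtle point is that the raw invariance constraints are not enumerable; the resolution is that the definition of relative effectiveness already absorbs $\ker(\psi)$, so I only need $Y$ to be effective and then intersect with the $\ker(\psi)$-fixed points in the definition of $X$. Concretely I take $Y=Y_{\mathcal C}$ with forbidden codings $\mathcal C$ alone, which is an effective $F(S)$-subshift, and verify that $\{\psi^*(y):y\in Y,\ gy=y\ \forall g\in\ker(\psi)\}$ recovers $X$, since a $\ker(\psi)$-invariant $y$ avoids $g[c]$ for all $g\in F(S),c\in\mathcal C$ if and only if $\psi^*(y)$ avoids $h[c]$ for all $h\in\G$, the translates matching up under $\psi$.

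For the converse, given an effective $F(S)$-subshift $Y$ with recursively enumerable forbidden codings $\mathcal{C}'$, I would take the very same $\mathcal{C}'$ as pattern codings defining a $\G$-subshift $X'$, and argue that $X'=\{\psi^*(y):y\in Y_{\mathrm{inv}}\cap Y\}$. The identity $h[c]$ for $h\in\G$ pulls back under $\psi$ to $\bigcup_{\psi(w)=h}w[c]$, so the forbidden-translate structure is preserved; a configuration $x\in A^{\G}$ lies in $X'$ iff its pullback $(\psi^*)^{-1}(x)$ (the unique $\ker(\psi)$-invariant lift) lies in $Y$, giving the desired equality, and $X'$ is effectively closed by patterns by construction. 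The main obstacle, and the place requiring the most care, is precisely the asymmetry introduced by non-recursive presentation: I must ensure that the invariance condition $gy=y$ is handled only through the relative construction (never enumerated), and that the map $\psi^*$ carries the shift action and the cylinder/forbidden-pattern combinatorics faithfully in both directions, so that effectivity is genuinely preserved rather than silently smuggling in the word problem of $\G$. Verifying this compatibility of $\psi^*$ with translates and with recursive enumerability is the crux; everything else is routine bookkeeping with pattern codings.
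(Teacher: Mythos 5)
Your proof is correct and is essentially the intended argument: the paper does not prove \Cref{prop_caractES} itself but defers to \cite[Section 7]{BarCarRoj2024}, and the proof there is exactly your transport of pattern codings along $\psi$, using that $\psi^*$ is a $\G$-equivariant bijection between the $\ker(\psi)$-invariant configurations and $A^{\G}$ (so translates $g[c]$ over $F(S)$ correspond to translates $\psi(g)[c]$ over $\G$), together with the fact that $F(S)$ is recursively presented, so effective and effectively closed by patterns coincide for $F(S)$-subshifts. Your reading of the displayed formula as ranging over $y \in Y$ (rather than all of $A^{F(S)}$, a typo in the statement) and your observation that the invariance constraints are never enumerated but absorbed into the fixed-point condition are both exactly right.
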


A proof of~\Cref{prop_caractES} can be found in~\cite[Section 7]{BarCarRoj2024}. We note that if $\G$ is a finitely generated non-amenable group, then every relatively effective $\G$-action is the topological factor of an effectively closed by patterns $\G$-subshift (the proof is exactly the same as in~\cite[Theorem 4.6]{BarSabSal2025s-selfsimulablegroups}). Therefore for a non-amenable group $\G$, it follows that $\G$ is strongly self-simulable if and only if the classes of sofic $\G$-subshifts and effectively closed by patterns $\G$-subshifts coincide. Later we will show that no amenable group is strongly self-simulable (\Cref{prop:AmenableET} and~\Cref{mainthm:property_ET_groups_not_SS}) and thus the previous characterization stands for all finitely generated groups.

\section{UFOs and extraterrestrial graphs}

Let $G=(V,E)$ be a graph. Given two finite subsets $A,B$ of $V$, a \define{matching} is a relation $M\subset A \times B$ such that the projection maps to each component are injective. A matching $M$ is called \define{complete} if it induces a bijection (i.e, every $a\in A$ and $b \in B$ occurs in a unique element of $M$). We say that a matching $M$ is by \define{paths of length at most $k$} if for every $(u,v) \in M$ we have $d_G(u,v)\leq k$.

\begin{definition}
 Let $G=(V,E)$ be a graph. An $(m, k, r)$-UFO is a triple $(U,F,O)$ of disjoint finite subsets of $V$ such that:
\begin{enumerate}
\item \label{it:large} $|U|\geq m|F|$;
\item \label{it:matching} there is a complete matching between $U$ and $O$ by paths of length at most $k$;
\item \label{it:disconnect} any path from $U$ to $O$ in $G$ goes through a vertex in $F$ or is of length at least $r$.
\end{enumerate}
\end{definition}

\begin{definition}
 We say that a graph $G$ is \emph{extraterrestrial}, if for all $m \in \mathbb{N}$, there exists $k \in \mathbb{N}$, such that for all $r \in \mathbb{N}$, there exists an $(m, k, r)$-UFO.
\end{definition}

The acronym ``UFO'' means Under, Front and Over and is inspired by~\Cref{fig:ufo_in_the_grid} where a UFO is drawn on the canonical Cayley graph of $\Z^2$. We remark that this is not equivalent to the notion of UFO used in~\cite{FriSewZuc2024_minimalcharacteristic}. 
 
\begin{example}
Consider the Cayley graph of $\Z^d$ generated by the canonical basis $S= \{v \in \Z^d : \norm{v}{1}  =1\}$. For positive integers $m,r$ put
\begin{itemize}
\item $U=[0,r-1]^{d-1}\times[-3^{d-1} m,-1]$
\item $F=[-r,2r-1]^{d-1}\times\{0\}$
\item $O=[0,r-1]^{d-1}\times[1,3^{d-1} m]$
\end{itemize}
The triple (U,F,O) forms a $(m, 3^{d-1} m + 1, 2r+4)$-UFO, see~\Cref{fig:ufo_in_the_grid} for the case $m = 1, d = 2, r = 7$. We deduce that $\mathrm{Cay}(\Z^d,S)$ is extraterrestrial.
\end{example}

Next we will show that being extraterrestrial is an invariant of quasi-isometry for graphs of bounded degree.

\begin{definition}
 Two graphs $G=(V,E)$ and $G'=(V',E')$ are \define{quasi-isometric} if there exists a function $f\colon V\to V'$ and positive real constants $A$, $B$ and $C$ such that
 \begin{itemize}
\item $f$ is a \define{quasi-isometric embedding}: for every $v_1,v_2\in V$ one has
\[\frac{1}{A}d_G(v_1,v_2)-B\leq d_{G'}(f(v_1),f(v_2))\leq Ad_G(v_1,v_2)+B.\]
\item $f(V)$ is \define{relatively dense}: for every $v'\in V'$ there exists $v\in V$ such that 
\[d_{G'}(v',f(v))\leq C.\]
\end{itemize}
\end{definition}

Next we give the proof of~\Cref{mainthm:property_ET_QI_invariant}, that is, that the property of being extraterrestrial is an invariant of quasi-isometry for graphs of bounded degree.

\begin{proof}[Proof of~\Cref{mainthm:property_ET_QI_invariant}]
 Let $G=(V,E)$ and $G'=(V',E')$ be quasi-isometric graphs of bounded degree. Suppose that $G$ is extraterrestrial and let $f\colon V\to V'$ be a map which realizes the quasi-isometry between $G$ and $G'$ with associated constants $A,B$ and $C$ as in the definition. 

 Let $D$ be the largest of the degrees of $G$ and $G'$. It follows that the number of elements in a ball of radius $n$ centered on any vertex on any of both graphs is at most $D^{n}$. In particular, $D^{AB}$ is an upper bound of the number of preimages of a vertex by $f$, as two vertices $u,v \in V$ with the same image must satisfy $d_G(u,v) \leq AB$.
 
 Let $(U,F,O)$ be an $(m,k,r)$-UFO of $G$. Define 
 \[F'=\{v'\in V' : d_G(v' ,f(F))\leq \alpha\}\textrm{ with } \alpha= A^2(1+B+2C)+B+C.\] Define also $U''=f(U)\setminus F'$ and $O''=f(O)\setminus F'$. Consider a complete matching $M\subset U\times O$ between $U$ and $O$ by paths of length at most $k$. Take a maximal subset $M'' \subset M$ with the property that for any pair $(u_1,o_1), (u_2,o_2) \in M''$, then $\min(d_G(u_1,u_2),d_G(o_1,o_2)) > AB$. Define \[ M' = \{ (f(u),f(o)) : (u,o) \in M'' \mbox{ and }  (f(u),f(o))\in U''\times O''\}.   \]

 Notice that by the property we required on $M''$ the map $f$ is injective in both of its projections and thus it follows that $M'$ is indeed a matching which satisfies $|M'| \geq |M''|-2|F'|$.

 Finally, define $U'$ and $O'$ as the projections of $M'$ onto the first and second coordinates, respectively. With this definition, $M'$ is a complete matching between $U'$ and $O'$. Our aim is to prove that for $m$ large enough, the sets $(U', F', O')$ form an  $(m',k',r')$-UFO for some $(m',k',r')\in\N^3$.

 \begin{enumerate}
     \item An element of $V'$ has at most $D^\alpha$ elements at distance at most than $\alpha$ and so $|F'|\leq|F|D^{\alpha}$. Moreover, as each vertex in $V'$ can have at most $D^{AB}$ preimages under $f$, it follows that $|M''| \geq \frac{|M|}{D^{2AB}} \geq \frac{m|F|}{D^{2AB}}$.

     By definition of $U'$ and the above considerations, we have \[|U'|=|M'|\geq |M''|-2|F'| \geq \frac{m|F|}{D^{2AB}}-2|F'| = \left( \frac{m}{D^{2AB+\alpha}}-2 \right)|F'| \geq m'|F'|.\] 

Where $m'=\left\lfloor\frac{m}{D^{2AB+\alpha}}-2 \right\rfloor$
\item Given two adjacent vertices $v_1,v_2\in V$, one has 
\[d_{G'}(f(v_1),f(v_2))\leq Ad_G(v_1,v_2)+B=A+B.\] It follows that if there is a path of length less than $k$ between $v_1$ and $v_2$, then there is path of length less than $k(A+B)$ between $f(v_1)$ and $f(v_2)$. Put $k'=\lceil k(A+B)\rceil$, we deduce that the set $M'$ is a complete matching between $U'$ and $O'$ by paths of length at most $k'$. 

\item Assume that there is a path $v'_0,v'_1,\dots,v'_{\ell}$ that avoids $F'$ of length ${\ell}$ in $G'$ between $f(v_0)=v'_0\in U'$ and $f(v_{\ell})=v'_{{\ell}}\in O'$. Given two consecutive vertices of this path, there exists $v_i$ and $v_{i+1}$ such that $f(v_i)$ and $f(v_{i+1})$ are at distance at most $C$ of the two consecutive vertices $v'_i$ and $v'_{i+1}$, so $d_{G'}(f(v_i),f(v_{i+1}))\leq 2C+1$. We deduce that 
\[d_G(v_i,v_{i+1})\leq A(d_{G'}(f(v_i),f(v_{i+1}))+B)\leq A((2C+1)+B).\]

Thus there is a path in $G$ between $v_i$ and $v_{i+1}$ of size at most $A(1+B+2C)$. Consider a vertex $v$ of this path, one has

\begin{align*}
    d_{G'}(v'_i,f(v))& \leq C+d_{G'}(f(v_i),f(v))\\  &\leq C+ Ad_G(v_i,v)+B\\ &\leq C+A(A(1+B+2C))+B\\ & =\alpha.
\end{align*}

Since $v'_i\notin F'$, one has $d_{G'}(v'_i,f(F))> \alpha$ so $v$ is not in $F$. We deduce that there exists a path from $v_0$ to $v_{\ell}$ which avoids $F$ and whose length is bounded by ${\ell}A(1+B+2C)$. From here we obtain that ${\ell}A(1+B+2C)\geq r$, thus if we let $r'=\left\lceil\frac{r}{A(1+B+2C)}\right\rceil$ it follows that every path in $G'$ between elements of $O'$ and $U'$ which avoids $F'$ has length at least $r'$.
 \end{enumerate}

Now we can prove the $G'$ is extraterrestrial. For $m'\in\N$, we choose $m\in \N$ such that $m'\leq \left\lfloor\frac{m}{D^{2AB+\alpha}}-2 \right\rfloor$. Consider $k\in \N$ such that $G$ admits an $(m,k,r)$-UFO for any $r\in\N$ and put $k'=\lceil k(A+B)\rceil$. Let $r'\in\N$, consider $r\in \N$ such that $r'\leq \left\lceil\frac{r}{A(1+B+2C)}\right\rceil$. Since $G$ admits a $(m,k,r)$-UFO, we deduce that $G'$ admits a $(m',k',r')$-UFO. So $G'$ is extraterrestrial
\end{proof}

If $\G$ is a finitely generated group and $S,S'$ are two finite symmetric sets of generators, then $\Cay(\G,S)$ and $\Cay(\G,S')$ are quasi-isometric. It follows that $\Cay(\G,S)$ is extraterrestrial if and only if $\Cay(\G,S')$ is extraterrestrial. This motivates the following definition.

\begin{definition}
    A group $\G$ is extraterrestrial if $\Cay(\G,S)$ is extraterrestrial for some (equivalently, any) symmetric finite set of generators $S$.
\end{definition}

Similarly, we remark that if $\H\leqslant \G$ is a subgroup and $S,S'$ are two symmetric finite set of generators of $\G$, then the Schreier graphs $\Cay(\G,\H,S)$ and $\Cay(\G,\H,S')$ are quasi-isometric, and thus we can speak about the pair $(\H,\G)$ being extraterrestrial.

\section{Extraterrestrial groups are not self-simulable}

In this section, we will show~\Cref{mainthm:property_ET_groups_not_SS}. To that end, we will require a technical lemma that shows that for every set of generators $S$ of a group $\G$ there is a configuration $x \in \{0,1\}^{\G}$ that distinguishes locally all elements of $\G$ by their neighborhoods, furthermore, the size of said neighborhood is linear with respect to the distance between the elements induced by $S$.

\begin{lemma}\label{lemma:distinct_nbhds}
 There exists a universal constant $A\geq 1$ such that for every infinite and finitely generated group $\G$, and every finite generating set $S$, there exists a configuration $\xi \in \{0,1\}^{\G}$ such that for every integer $k \geq 1$ and $g,h\in \G$ with $d_S(g, h) = k$, there is $t\in \G$ with $|t|_S \leq kA$ such that $\xi(gt)\neq \xi(ht)$.
\end{lemma}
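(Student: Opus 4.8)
The plan is to produce $\xi$ by the probabilistic method, choosing it as an i.i.d.\ fair-coin configuration and using a local-lemma argument to avoid all ``bad'' events. For an ordered pair $g \ne h$ with $k = d_S(g,h)$, say the bad event $E_{g,h}$ occurs if $\xi(gt) = \xi(ht)$ for every $t \in B_{kA}$; avoiding every $E_{g,h}$ is exactly the conclusion of the lemma. First I would bound $\mathbb{P}(E_{g,h})$. Consider the graph $H_{g,h}$ on the vertex set $gB_{kA}\cup hB_{kA}$ with one edge $\{gt,ht\}$ for each $t\in B_{kA}$. A vertex $v$ lies in at most two such edges (it can equal $gt$ for the unique $t=g^{-1}v$ and $ht'$ for the unique $t'=h^{-1}v$), so $H_{g,h}$ has maximum degree two and no loops; hence it is a disjoint union of paths and cycles with at least $|B_{kA}|/2$ edges. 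Any such graph has a matching of size at least a fixed fraction $c>0$ (say $c=1/6$) of its edges, and the constraints $\xi(gt)=\xi(ht)$ indexed by a matching involve pairwise disjoint coordinates, hence are independent. Therefore $\mathbb{P}(E_{g,h})\le 2^{-c|B_{kA}|}$ with $c$ an absolute constant.

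Next I would set up the asymmetric Lov\'asz Local Lemma with weights $x_{g,h}=2^{-(c/2)|B_{kA}|}$, so that $\mathbb{P}(E_{g,h})\le x_{g,h}^2$ and it suffices to check $x_{g,h}\le \prod_{E_{g',h'}\sim E_{g,h}}(1-x_{g',h'})$. Two events interfere only when their supports meet, and the support of $E_{g,h}$ has size at most $2|B_{kA}|$, while the number of scale-$k'$ events whose support contains a fixed vertex is at most $2|B_{k'A}|^2$. Using $-\ln(1-x)\le 2x$, the local-lemma inequality reduces to requiring $\sum_{k'\ge 1}|B_{k'A}|^2\,2^{-(c/2)|B_{k'A}|}$ to be below an absolute threshold. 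The decisive point is that the factor $|B_{kA}|$ measuring the size of the support cancels against the exponent of $\mathbb{P}(E_{g,h})$, so no growth data of $\G$ survives. Since every infinite finitely generated group satisfies $|B_R|\ge R+1$ for all $R$ and the map $n\mapsto n^2 2^{-(c/2)n}$ is eventually decreasing, for $A$ large the sum is bounded by $\sum_{k'\ge1}(k'A)^2 2^{-(c/2)k'A}$, which tends to $0$ as $A\to\infty$ uniformly over all groups and generating sets. Choosing $A$ large enough to push this below the threshold yields the claimed universal constant.

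The main obstacle is that the family $\{E_{g,h}\}$ is infinite and, worse, its dependency graph is not locally finite: an event $E_{g',h'}$ with very large $k'$ sits far away yet its huge ball $g'B_{k'A}$ can still meet the support of $E_{g,h}$, so each event interferes with infinitely many others. I would circumvent this by compactness rather than invoking an infinitary local lemma directly. The weighted condition verified above uses the full (infinite) product over neighbours, which converges to a positive value precisely because $\sum_{E'\sim E}x_{E'}<\infty$; a fortiori the same inequality holds for every finite subfamily, where the ordinary finite Lov\'asz Local Lemma guarantees a configuration avoiding those finitely many bad events. Since each set $\{\xi : E_{g,h}\text{ does not occur}\}$ is clopen in $\{0,1\}^{\G}$ and we have just shown these clopen sets have the finite intersection property, compactness of $\{0,1\}^{\G}$ gives a single $\xi$ avoiding all of them, which is the desired configuration.
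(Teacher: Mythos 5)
Your proof is correct, and it takes a different route from the paper's. The paper does not run a probabilistic argument itself: it invokes \cite[Theorem 2.4]{AubBarTho_aperiodic_2019} as a black box --- given an enumeration $(s_i)$ of $\G\setminus\{1_\G\}$ and finite sets $T_i$ with $|T_i|\geq Ci$ and $T_i\cap s_iT_i=\varnothing$, there is a configuration $\xi$ with $\xi(gt)\neq\xi(gs_it)$ for some $t\in T_i$ --- and its only new work is combinatorial: enumerate $\G\setminus\{1_\G\}$ by word length and greedily build a maximal $T_i\subset B_{A|s_i|_S}$ with $T_i\cap s_iT_i=\varnothing$, showing $|T_i|\geq \frac{A}{3}|B_k|\geq Ci$. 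Since that cited theorem is itself proved via the Lov\'asz Local Lemma, you have in effect inlined and re-proved the black box: your degree-two multigraph on $gB_{kA}\cup hB_{kA}$ and the matching of size at least $|B_{kA}|/6$ (correctly accounting for the doubled edges when $(g^{-1}h)^2=1_\G$) play exactly the role of the paper's disjointness condition $T_i\cap s_iT_i=\varnothing$, which exists to make constraint coordinates independent; the same parity issue is what produces the factor $3$ in the paper's $A=3\lceil C\rceil$. Your verification of the asymmetric LLL condition is sound --- the key cancellation of the support size $|B_{kA}|$ against the exponent of $\mathbb{P}(E_{g,h})$ is right, and the uniform bound $|B_R|\geq R+1$ for infinite finitely generated groups is all the growth data needed --- and your compactness reduction correctly sidesteps the genuine obstacle that the dependency graph is not locally finite (large-scale events with huge supports meet every fixed support), a point the finite LLL plus the finite intersection property of clopen good sets handles cleanly. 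What each approach buys: yours is fully self-contained and makes the mechanism transparent, at the cost of length; the paper's is much shorter and, by leaning on the quantitative form of the citation ($C\geq 17$ suffices), yields the explicit universal constant $A=51$ recorded after the lemma, whereas extracting a concrete $A$ from your version would require chasing the threshold in $\sum_{k'\geq 1}(k'A)^2 2^{-(c/2)k'A}$.
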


\begin{proof}
It is shown in~\cite[Theorem 2.4]{AubBarTho_aperiodic_2019} that if $(s_i)_{i \geq 1}$ is an enumeration of $\G \setminus \{1_G\}$, $C> 0$ is sufficiently large (specifically, $C \geq 17$ suffices) and $(T_i)_{i \geq 1}$ is a sequence of finite subsets of $\G$ such that $|T_i| \geq C i$ and $T_i \cap s_i T_i = \varnothing$, then there is a configuration $\xi$ where for each $g\in \G$ and each $i \geq 1$ there is $t=t(g,i)\in T_i$ such that $\xi(gt)\neq \xi(gs_it)$. Take $A = 3\lceil C \rceil$ and fix some set of generators $S$. It suffices to show that we can choose  an enumeration $(s_i)_{i \geq 1}$ of $\G \setminus \{1_G\}$ and a sequence of finite subsets $T_i$ with the properties above and such that $T_i \subset B_{A|s_i|_S}$ for all $i \geq 1$. 

Fix an enumeration $(s_i)_{i \geq 1}$ of $\G \setminus \{1_G\}$ in such a way that if $i\leq j$ then $|s_i|_S \leq |s_j|_S$. In this way we are sure that whenever $|s_i|_S=k$, then $i \leq |B_k|$.

For $i \geq 1$ denote $k = |s_i|_S$. We take $T_i$ as any maximal (for inclusion) subset of $B_{kA}$ with the property that $T_i \cap s_i T_i = \varnothing$. We claim that $|T_i| \geq \frac{A}{3}|B_k|$ and thus that $|T_i|\geq Ci$.

Indeed, suppose that $3|T_i|< A|B_k|$. As $\G$ is infinite, we have $|B_{kA}|\geq A|B_k|$ from which it follows that $3|T_i| < |B_{kA}|$ and thus that there exists $g\in B_{kA}\setminus (s_i^{-1}T_i \cup T_i \cup s_iT_i)$. Take $T_i' = T_i \cup \{g\}$. By maximality of $T_i$ we have that $T'_i \cap s_iT' _i \neq \varnothing$, thus either there are $h,h'  \in T'_i$ such that $h = s_ih'$. Clearly $h \neq h'$ as $s_i \neq 1_{\G}$. Furthermore, as $T_i \cap s_i T_i = \varnothing$, it follows that either $h = g$ or $h' = g$, but neither of these cases can hold because of our choice that $g\notin (s_i^{-1}T_i \cup T_i \cup s_iT_i)$.
\end{proof}

We note that the value $A = 51$ works as the universal constant from~\Cref{lemma:distinct_nbhds}.

\subsection{The generalized mirror shift}

We will construct, for every finitely generated group $\G$, a $\G$-subshift $X_{\texttt{GM}}$ which is effectively closed by patterns but not sofic. We call $X_{\texttt{GM}}$ the \define{generalized mirror subshift} because it is a geometric generalization of the classical mirror shift construction from \cite{Hoch_chapter_CWSD} discussed in the introduction. Variants of the mirror shift on nonabelian groups appear also in \cite{AubBarSab_effectiveness_2017,BarSabSal2025s-selfsimulablegroups}. 

For the remainder of this section, fix a finitely generated group $\G$ and a finite symmetric set of generators $S$ such that $1_{\G}\notin S$. Take $F(S)$ the free group generated by $S$ and let $\psi\colon F(S)\to \G$ be the canonical epimorphism. We also fix an arbitrary order $\leq$ on $S$ and consider the total order $\preceq$ on $S^*$ where words are first ordered by length and then lexicographically, that is: $s_1s_2\dots s_n\prec s'_1s'_2\dots s'_m$ if and only if either $n<m$ or $n=m$ and for some $k<n$, $s_1s_2\dots s_k = s'_1s'_2\dots s'_k$ and $s_{k+1}< s'_{k+1}$.

Fix an integer $A$ as in~\Cref{lemma:distinct_nbhds}, consider an arbitrary alphabet $\Sigma$ and take $\Lambda = \Sigma \times \{0,1\}$. We will first define two types of patterns on alphabet $\Lambda$ in $F(S)$ which will be used in the definition of $X_{\texttt{GM}}$. For a reduced word $g \in F(S)$ denote by $|g|$ its word length. Furthermore, for $T\subset F(S)$ and $p \in \Lambda^T$, denote by $p_{\Sigma} \in \Sigma^T$ and $p_{\{0,1\}} \in \{0,1\}^T$ its projections, that is, the maps such that for every $t \in T$ we have $p(t)= (p_{\Sigma}(t),p_{\{0,1\}}(t))$.

For a positive integer $k$, let $B_k = \{g \in F(S) : |g|\leq k\}$. For a pattern $p \in \Lambda^{B_{(A+1)k}}$ and $g,h \in B_k$ we say that $g,h$ are $p$-equivalent and write $g \simeq_p h$ if for every $t \in B_{kA}$, $p_{\{0,1\}}(gt) = p_{\{0,1\}}(ht)$.

\begin{definition}
    A pattern $p \in \Lambda^{B_{(A+1)k}}$ is \define{coherent} if for every pair of $p$-equivalent $g,h \in B_k$ we have $ p_{\Sigma}(g) = p_{\Sigma}(h)$.
\end{definition}

Intuitively, we use the $\{0,1\}^{F(S)}$ component as an oracle for an equivalence relation on $B_k$ where two elements $g,h \in B_k$ are identified if the symbols in $gB_{kA}$ and $hB_{kA}$ coincide. The fundamental observation is that~\Cref{lemma:distinct_nbhds} ensures that some configuration must encode the equivalence relation on $F(S)$ given by the word problem of $\G$.

Now we choose $\Sigma$ explicitly, namely, we take
\[\Sigma = \{\star, \mathfrak{u},\mathfrak{o}\}\times \{-,+\}.\]

For $T \subset F(S)$ and $p\in \Lambda^T$, we denote by $p_{\Sigma,A}$ and $p_{\Sigma,\pm}$ the projections of $p_{\Sigma}$ to each component of $\Sigma$ respectively. Furthermore, write $\mathcal{U}(p)$ (respectively $\mathcal{O}(p)$) for the set of positions $g\in T$ where $p_{\Sigma,A}(g)=\mathfrak{u}$ (respectively $\mathfrak{o}$).

For a coherent pattern $p \in \Lambda^{B_{(A+1)k}}$, we define a matching $M(p)\subset F(S) \times F(S)$ with paths of length at most $k$ inductively. Let $w_1,\dots,w_m$ be the sequence of elements of $B_k\setminus \{1_{F(S)}\}$ of length at most $k$ ordered according to $\preceq$. Take $M_0(p)= \varnothing$ as the initial matching and $A_0(p) = (\mathcal{U}(p) \cup\mathcal{O}(p)) \cap B_k$ as the initial set of available vertices. 

Suppose that $M_{i-1}(p)$ and $A_{i-1}(p)$ have been constructed for some $1 \leq i\leq m$. We set $M_i(p)=M_{i-1}(p)$, $A_i(p)=A_{i-1}(p)$ and modify them as follows:

\begin{itemize}
    \item First, we remove from $A_i(p)$ all $g \in B_k$ such that either $g\in\mathcal{U}(p)$ but  $gw_{i+1}\notin B_k$ or $g\in\mathcal{O}(p)$ but $gw_i^{-1}\notin B_k$.
    \item In lexicographic order: for every $g,h \in A_i(p)$ such that $h = gw_i$ with $g \in \mathcal{U}(p)$ and $h\in\mathcal{O}(p)$, we add $(g,h)$ to $M_i(p)$ and remove from $A_i(p)$ all $t$ such that either $t\simeq_p g$ or $t\simeq_p h$.
\end{itemize}

Finally, we set $M(p)=M_m(p)$. The intuition behind this procedure is that elements from the quotient graph $B_k/\simeq_p$ are matched from $\mathcal{U}(p)$ to $\mathcal{O}(p)$ prioritizing according to the order on $S^*$. If at some point some element could have potentially been matched with an element outside of $B_k$, we remove it from the set of available vertices (as in a larger pattern it could have been matched to an element outside). With this procedure, the matching $M(p)$ has the following properties:

\begin{enumerate}
    \item The procedure to check if a pattern $p$ is coherent and to construct $M(p)$ are effective.
\item $M(p)$ is a matching by paths of length less than $k$ between a subset of $\mathcal{O}(p)$ and a subset of $\mathcal{U}(p)$.
\item If $k \leq k'$ and $p \in \Lambda^{B_{(A+1)k}}$, $p' \in \Lambda^{B_{(A+1)k'}}$ are two coherent patterns such that $p'|_{B_{(A+1)k}} =p$, then $M(p)\subset M(p')$. Thus we can define $M_k(y)$ for $y\in\Lambda^{F(s)}$ that does not contain non-coherent patterns as the union over $k \geq 1$ of $M(y|_{B_{(A+1)k}})$.
\end{enumerate}

\begin{definition}
    A coherent pattern $p \in \Lambda^{B_{(A+1)k}}$ is \define{matched} if for every pair $(g,h)\in M(p)$ we have $ p_{\Sigma,\pm}(g) = p_{\Sigma,\pm}(h)$.
\end{definition}

With this, we can define $Y_{\mathtt{GM}}\subset \Lambda^{F(S)}$ as the set of all configurations given by the following set of forbidden patterns:
\begin{enumerate}
    \item \textbf{Coherence rule:} For all $k \geq 1$, we forbid all patterns $p \in \Lambda^{B_{(A+1)k}}$ which are not coherent.
    \item \textbf{Matching rule}: For all $k \geq 1$, we forbid all patterns $p \in \Lambda^{B_{(A+1)k}}$ which are coherent but are not matched.
\end{enumerate}

It is clear from the observation above that $Y_{\mathtt{GM}}\subset \Lambda^{F(S)}$ is an effective subshift. Recall that we denote by $\psi\colon F(S) \to \G$ the canonical epimorphism and that for $y \in \Lambda^{F(S)}$ which is fixed by $\ker(\psi)$ we can define $\psi^*(y)\in \Lambda^{\G}$ given by $\psi^*(x)(\psi(g)) = x(g)$ for every $g \in \G$.

\begin{definition}
    The \define{generalized mirror shift} is the set of configurations $X_{\texttt{GM}}\subset \Lambda^{\G}$ given by \[ X_{\texttt{GM}} = \{\psi^*(y) :  y\in Y_{\mathtt{GM}} \mbox{ and } gy=y \mbox{ for every } g \in \operatorname{ker}(\psi) \}.     \]
\end{definition}

From~\Cref{prop_caractES} it follows that $X_{\texttt{GM}}$ is effectively closed by patterns. We shall give a brief description of what some configurations in $X_{\texttt{GM}}$ look like.

First, take $\xi \in \{0,1\}^{\G}$ as in~\Cref{lemma:distinct_nbhds}. This induces a configuration $\widehat{\xi}\in \{0,1\}^{F(S)}$ given by $\widehat{\xi}(g)=\xi(\psi(g))$. Notice that by definition of $\xi$, we have that $g,g'\in F(S)$ satisfy that $\widehat{\xi}(gt)=\widehat{\xi}(g't)$ for all $t \in B_{|g'g^{-1}|}$ if and only if $g'g^{-1}\in \ker(\psi)$. From the coherence rule it follows that if $y \in Y_{\texttt{GM}}$ is such that $y_{\{0,1\}}=\widehat{\xi}$, then the stabilizer of $y$ is precisely $\ker(\phi)$, and thus it induces a configuration $x \in X_{\texttt{GM}}$ such that $x_{\{0,1\}}=\xi$ and for all $g \in F(S)$, $x_{\Sigma}(\phi(g)) = y(g)$. Moreover, from the matching rule, the configuration $y$ carries a matching $M(y)\subset \mathcal{U}(y)\times \mathcal{O}(y)$ and satisfies that for every $(g,g')\in M(y)$, then $y_{\Sigma,\pm}(g)=y_{\Sigma,\pm}(g')$. Note that this induces naturally a matching $M(x) = \{ (\psi(g),\psi(g')) : (g,g')\in M(y)\}$ and thus for every $(g,g')\in M(x)$ we have $x_{\Sigma,\pm}(g)=x_{\Sigma,\pm}(g')$.

\subsection{Proof of~\Cref{mainthm:property_ET_groups_not_SS}} The result follows directly from the following theorem.

\begin{theorem}
    Let $\G$ be a finitely generated extraterrestrial group. The generalized mirror shift on $\G$ is effectively closed by patterns but not sofic.
\end{theorem}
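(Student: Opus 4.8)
The statement that $X_{\texttt{GM}}$ is effectively closed by patterns is already established above via \Cref{prop_caractES}, so the plan is to prove that it is \emph{not} sofic, arguing by contradiction. Suppose $X_{\texttt{GM}}$ is sofic. By the reduction recalled in the preliminaries it is then a $1$-block factor of an $S$-nearest neighbor SFT, so I fix such a cover $Z\subseteq Q^{\G}$, a local rule making $Z$ nearest neighbor, and a $1$-block factor $\pi=\Pi^{\G}\colon Z\to X_{\texttt{GM}}$ with $\Pi\colon Q\to\Lambda$; write $D$ for the degree bound of $\Cay(\G,S)$ and $R$ for the interaction range of $Z$ and $\pi$. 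The crucial structural observation I would record first is that the entire definition of $X_{\texttt{GM}}$ — coherence, the matching $M(\cdot)$, and which pairs are forced to carry equal signs — depends only on the type coordinate $p_{\Sigma,A}\in\{\star,\mathfrak{u},\mathfrak{o}\}$ and on the oracle coordinate $p_{\{0,1\}}$, never on the sign coordinate $p_{\Sigma,\pm}$. Hence, along any family of configurations sharing the same type and oracle data, the matching $M$ is a single fixed object and the only surviving constraint is that matched positions carry equal signs.

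Next I would manufacture an exponentially large such family from one UFO. Given the fixed cover, choose $m$ large compared to $D$ and $\log|Q|$ (the threshold being dictated by the final count), let $k$ be the constant provided by extraterrestriality for this $m$, and then pick $r$ very large compared to $k$ and $R$; let $(U,F,O)$ be an $(m,k,r)$-UFO. I would build a configuration whose type coordinate is $\mathfrak{u}$ exactly on $U$, $\mathfrak{o}$ exactly on $O$, $\star$ on $F$, whose oracle coordinate is the distinguishing configuration $\xi$ of \Cref{lemma:distinct_nbhds}, and such that the induced matching restricts to a complete matching of $U$ with $O$. This realizability is the first technical point to check: the UFO supplies a complete matching of $U$ and $O$ by paths of length at most $k$, and \Cref{lemma:distinct_nbhds} together with coherence guarantees that the greedy procedure defining $M$ sees each relevant element as its own $\simeq_p$-class, so $M$ does pair $U$ with $O$. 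Freezing the type and oracle coordinates and letting the signs vary freely on $U$ (copying them to $O$ along $M$) then produces $2^{|U|}$ configurations of $X_{\texttt{GM}}$, all with the same type and oracle data and hence the same matching.

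Finally I would derive the contradiction by a pigeonhole-and-splicing argument whose arithmetic core is the UFO inequality $|U|\ge m|F|$. For each $\epsilon\in\{-,+\}^{U}$ fix a lift $z_\epsilon\in Z$ of the corresponding configuration. The heart of the proof is to isolate a set $\Gamma$ of size at most $c\,|F|$, with $c=c(D,R)$ independent of $m$ and $k$, across which one may cut: concretely, knowing $z_\epsilon$ on a bounded neighborhood of $F$ should let one splice the $U$-side of $z_{\epsilon'}$ onto the $O$-side of $z_\epsilon$ and still land in $Z$. Granting such a $\Gamma$, since $2^{|U|}\ge 2^{m|F|}$ exceeds $|Q|^{|\Gamma|}\le|Q|^{c|F|}$ once $m>c\log_2|Q|$, two distinct patterns $\epsilon\neq\epsilon'$ have lifts agreeing on $\Gamma$; splicing yields $z''\in Z$ whose image $\pi(z'')$ retains the common type and oracle data — hence the same matching $M$ — yet carries signs $\epsilon'$ on $U$ and $\epsilon$ on $O$, so some matched pair gets unequal signs, contradicting the matching rule while $\pi(z'')\in X_{\texttt{GM}}$. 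The main obstacle, and the only place the detour parameter $r$ enters, is exactly the construction of this bounded separator: removing a neighborhood of $F$ does \emph{not} disconnect $U$ from $O$, so I must use that every detour avoiding $F$ has length at least $r$ to ensure that, within the finite range $R$ of $Z$, no information distinguishing the $2^{|U|}$ configurations can travel from $U$ to $O$ except across $F$. Turning this coarse bottleneck into a genuine splicing valid in $Z$ — letting $r\to\infty$ to neutralise the long detours while keeping the cut cost proportional to $|F|$ rather than to $|U|$ — is the technical crux of the argument.
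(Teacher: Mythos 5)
Your overall skeleton matches the paper's argument — pigeonhole over sign patterns against $q^{|F_r|}$ states on $F_r$, then a splice in an $S$-nearest-neighbor cover — but there is a genuine gap at exactly the point you flag as ``the technical crux,'' and it is not a detail: you never construct the cut, and the object you ask for does not exist. At finite $r$ there is \emph{no} separator $\Gamma$ with $|\Gamma|\le c|F_r|$ (nor of any size) across which splicing is valid, because $U_r$ and $O_r$ lie in the \emph{same} connected component of $\Cay(\G,S)\setminus F_r$ via the long detours; the nearest-neighbor condition must hold along those detours too, so the spliced point generally fails to lie in $Z$, and no choice of $c=c(D,R)$ repairs this. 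The paper resolves this by separating the two roles you are asking $\Gamma$ to play simultaneously: smallness of $F_r$ is used \emph{only} for the pigeonhole at finite $r$, while the actual cut is produced by a compactness argument and is allowed to be infinite. Concretely: for each $r$ one extracts two lifts $y^1,y^2$ agreeing on $F_r$ whose images differ on some matched pair $(g,g')$ with $g'\in gB_k$, recenters both at $g$, and takes a limit point $(y,y')$ of the sequence of recentered pairs. One then defines $F$ as the (typically infinite) set of positions where $y$ and $y'$ agree and the image carries a $\star$, and shows that $F$ \emph{genuinely} disconnects $\mathcal{U}(\phi(y))$ from $\mathcal{O}(\phi(y))$: any finite path avoiding $F$ has some length $t$, persists to the approximants $(y_r,y'_r)$ on $B_t$ for large $r$ along a subsequence, and there yields a path from $gU_r$ to $gO_r$ of length at most $t$ avoiding $gF_r$, contradicting the $(m,k,r)$-UFO property once $r>t$. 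Only after this limit is the splice performed, on the union $W$ of components of $\mathcal{U}(\phi(y))$ in $\Cay(\G,S)\setminus F$. Your phrase ``letting $r\to\infty$ to neutralise the long detours'' points at this, but without the recentering, the limit-point extraction, and the agreement-set cut, the contradiction is not reached.

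A secondary inaccuracy: you assert that the greedy matching $M$ restricts to a \emph{complete} matching of $U_r$ with $O_r$, citing \Cref{lemma:distinct_nbhds} and coherence. Coherence with oracle $\xi$ only guarantees that $\simeq_p$-classes are singletons; it does not make the lexicographically greedy procedure reproduce the UFO's matching, and $M$ may well leave vertices of $U_r$ unmatched. What is true, and what the paper proves by maximality of the greedy matching, is the bound $2|M_{r,k}|\ge |U_r|$ for the pairs $M_{r,k}$ joined by paths of length at most $k$: if more than half of $U_r$ were unmatched, some pair of the UFO's complete matching would have both endpoints available, contradicting greediness. This halves your count from $2^{|U_r|}$ to $2^{|U_r|/2}$, which is why the threshold is $m>2\log_2 q$ rather than $m>c\log_2 q$ with your unspecified $c$; it is harmless for the pigeonhole but your stated justification of completeness is false as written. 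Note also that recentering at a pair from $M_{r,k}$ (paths of length at most $k$) is what guarantees the distinguishing disagreement survives in the limit inside the fixed window $B_k$; pairs of $M_r$ matched by longer paths would escape to infinity.
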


\begin{proof}Let $\G$ be an extraterrestrial group. Take the generalized mirror shift $X_{\texttt{GM}}$ on $\G$ and suppose it is a factor of some $\G$-SFT $Y$. Without loss of generality, we may assume that $Y$ is $S$-nearest neighbor and that the factor map $\phi \colon Y \to X_{\texttt{GM}}$ is a $1$-block map.

We claim there exist two configurations $y,y'\in Y$ such that if we denote $x = \phi(y)$ and $x'=\phi(y')$ then

    \begin{enumerate}
        \item $x_{\Sigma,A} = x'_{\Sigma,A}$ and $x_{\{0,1\}} = x'_{\{0,1\}}$, that is, $x$ and $x'$ match on the $\{0,1\}$ layer and have the same symbols $\{\star,\mathfrak{u},\mathfrak{o}\}$, in particular $M(x)=M(x')$.
        \item There exists $(g_1,g_2)\in M(x)$ such that $x_{\Sigma,\pm}(g_1)=x_{\Sigma,\pm}(g_2)\neq x'_{\Sigma,\pm}(g_2) = x'_{\Sigma,\pm}(g_1)$.
        \item There exists a set $F\subset \G$ such that 
        \begin{enumerate}
        \item $y|_F = y'|_F$.
            \item $\mathcal{O}(x)$ and $\mathcal{U}(x)$ are contained in distinct connected components of $\Cay(\G,S)$ after removing $F$.
        \end{enumerate} 
    \end{enumerate}
Let us first show that the existence of such configurations $y,y'$ leads to a contradiction.

Let $W\subset \G$ be the union of the connected components of $\mathcal{U}(x)$ in $\Cay(\G,S)$ after removing $F$. This means that the $S$-neighborhood of any $g \in W$ is either in $W$ or in $F$. By property (3a) and the fact that $Y$ is $S$-nearest neighbor, we can construct a new configuration $y^*\in Y$ such that $y^*|_W = y|_W$ and $y^*|_{\G \setminus W}=y'|_{\G\setminus W}$. Let $x^* = \phi(y^*)$. As $\phi$ is a $1$-block map, it follows that $x^*|_W = x|_W$ and $x^*|_{\G \setminus W}=x'|_{\G \setminus W}$. By property (1) we have that $x^*_{\Sigma,A}=x_{\Sigma,A}$ and $x^*_{\{0,1\}}=x_{\{0,1\}}$, in particular we deduce that $M(x^*)=M(x)=M(x')$.

Let $(g_1,g_2)\in M(x^*)$ be the pair of elements from property (2). On the one hand, as $x^*\in X_{\texttt{GM}}$, we have that $x^*_{\Sigma,\pm}(g_1)=x^*_{\Sigma,\pm}(g_2)$. On the other hand, by property (3b) we get that $x^*_{\Sigma,\pm}(g_1)=x_{\Sigma,\pm}(g_1)$ and $x^*_{\Sigma,\pm}(g_2)=x'_{\Sigma,\pm}(g_2)$. However by property (2) these two values are distinct, thus raising a contradiction.

Now we show that the required configurations $y,y'$ do exist. Let $\xi\in \{0,1\}^{\G}$ be a configuration which satisfies the hypothesis of~\Cref{lemma:distinct_nbhds}. Let $q$ be the size of the alphabet of $Y$ and fix an integer $m> 2\log_2(q)$. Since $\G$ is extraterrestrial, there exists $k \in \N$ such that for all $r \in \N$ there exists an $(m,k,r)$-UFO $(U_r,F_r,O_r)$.

For each $r \in \N$, let $W_r$ be the set of all $x \in X_{\mathtt{GM}}$ which satisfy the following properties

\begin{enumerate}
    \item $x_{\{0,1\}}=\xi$.
    \item For all $g \in \G\setminus (U_r\cup O_r)$, $x_{\Sigma,A}(g)=\star $ and $x_{\Sigma,\pm}(g) = +$.
    \item For all $g \in U_r$, $x_{\Sigma,A}(g)= \mathfrak{u}$.
    \item For all $g \in O_r$, $x_{\Sigma,A}(g)= \mathfrak{o}$.
\end{enumerate}

In simpler words, $W_r$ is the finite set of all configurations $x$ that have the fixed value $\xi$ on the second component (and thus there are no non-trivial identifications in $\G$), that satisfy $\mathcal{U}(x)=U_r$ and $\mathcal{O}(x)=O_r$, and that are constant equal to $(\star,+)$ everywhere else. Notice that all $x \in W_r$ have the same induced matching, which we will call $M_r$, and that thus the only difference between elements of $W_r$ are the symbols $\{-,+\}$ on $U_r\cup O_r$.

Let $M_{r,k}$ be the subset of $M_r$ which consists of pairs $(g,g')\in U_r\times O_r$ given by paths of length at most $k$. We claim that $2|M_{r,k}| \geq |U_r|$. Indeed, as $(U_r,F_r,O_r)$ is a UFO, it follows that there exists a matching $M'$ of $U_r$ to $O_r$ by paths of length at most $k$. If $2|M_{r,k}|<|U_r|$, there would exist $(g,g')\in M'$ such that neither $g$ nor $g'$ occurs in $M_{r,k}$, but this is not possible as $M_{r,k}$ is constructed by greedily matching vertices by lexicographic order. 

From the computation above, the fact that $|U_r|\geq m|F_r|$ and $m > 2\log_2(q)$ it follows that \[ 2^{|M_{r,k}|} \geq 2^{\frac{|U_r|}{2}} \geq 2^{\frac{m|F_r|}{2}} > q^{|F_r|}.    \]

From the inequality above, it follows that there exist $x^1,x^2 \in W_r$ and $y^1,y^2 \in Y$ with $\phi(y^1)=x^1$, $\phi(y^2)=x^2$ such that
\begin{enumerate}
    \item There is $(g,g')\in M_{r,k}$ such that $x^1_{\Sigma,\pm}(g)=x^1_{\Sigma,\pm}(g') \neq x^2_{\Sigma,\pm}(g') = x^2_{\Sigma,\pm}(g)$.
    \item $y^1|_{F_r}= y^2|_{F_r}$.
\end{enumerate} 

Define $y_r = g^{-1}y_1$ and $y'_r = g^{-1}y_2$. Take $(y,y')\in Y \times Y$ as any limit point of the sequence $(y_r,y'_r)_{r \in \N}$. We claim that $(y,y')$ satisfies the requirements.

First, by construction, for each $r$ we have that $\phi(y_r)_{\Sigma,A} = \phi(y'_r)_{\Sigma,A}$ and $\phi(y_r)_{\{0,1\}} = \phi(y'_r)_{\{0,1\}}$. By compactness it follows that $\phi(y)_{\Sigma,A} = \phi(y')_{\Sigma,A}$ and $\phi(y)_{\{0,1\}} = \phi(y')_{\{0,1\}}$.

Second, by construction we have that for each $r$ there is $h \in B_k$ such that $(1_G,h) \in M(\phi(y_r))$ and $\phi(y_r)_{\Sigma,\pm}(1_{\G}) = \phi(y_r)_{\Sigma,\pm}(h) \neq \phi(y'_r)_{\Sigma,\pm}(h) = \phi(y'_r)_{\Sigma,\pm}(1_{
\G})$. This property passes again by compactness to $(y,y')$.

Finally, Let $F\subset \G$ be the set of all positions where $y$ and $y'$ coincide and map to a $\star$, that is \[ F = \{g \in \G \setminus (\mathcal{U}(\phi(y)) \cup \mathcal{O}(\phi(y'))) : y(g)=y'(g)\}. \]

Suppose by contradiction that there is a path from a vertex in $(\mathcal{U}(\phi(y))$ to some vertex in $(\mathcal{O}(\phi(y))$ in $\Cay(\G,S)$ after removing $F$. Take $t\geq 0$ large enough such that said path is of length at most $t$ and such that it is contained in $B_t$. By compactness, for all $r$ large enough along a subsequence we have that $(y_r,y_{r'})$ coincide with $(y,y')$ on $B_t$, thus we get a path from a vertex in $(\mathcal{U}(\phi(y_r))$ to some vertex in $(\mathcal{O}(\phi(y_r))$ of length $t$ that avoids the elements in $g\in \G$ where $y_r(g)=y'_r(g')$. But by construction $y_r$ and $y'_r$ coincide in $gF_r$, thus we get a path from $gU_r$ to $gO_r$ of length at most $t$ that avoids $gF_r$. This contradicts the fact that $(U_r,F_r,O_r)$ is an $(m,k,r)$-UFO for any $r>t$.\end{proof}

The following consequence is immediate from the definitions in~\Cref{subsec_computability_shift}.

\begin{cor}
    Finitely generated extraterrestrial groups are not strongly self-simulable. Furthermore, if such a group is recursively presented, then it is not self-simulable.
\end{cor}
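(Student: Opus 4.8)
The plan is to deduce the corollary directly from the theorem above, which produces for every finitely generated extraterrestrial group $\G$ a generalized mirror shift $X_{\texttt{GM}}$ that is effectively closed by patterns but not sofic, together with the characterizations recalled in \Cref{subsec_computability_shift}. The whole argument is a matter of matching this dynamical object against the two definitions, so I expect no genuine obstacle; the only point requiring care is to verify that $X_{\texttt{GM}}$ lands in the correct class of actions for each notion.

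First I would establish the non-strong-self-simulability. By \Cref{prop_caractES}, a $\G$-subshift which is effectively closed by patterns is exactly an expansive relative effective $\G$-action on a zero-dimensional space; hence $X_{\texttt{GM}}$ is a relative effective $\G$-action. If $\G$ were strongly self-simulable, then by definition every relative effective $\G$-action, and in particular $X_{\texttt{GM}}$, would be the topological factor of a $\G$-SFT. But a subshift that is a topological factor of an SFT is sofic, contradicting the non-soficity of $X_{\texttt{GM}}$ asserted by the theorem. This yields the first assertion.

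For the second assertion I would specialize to the recursively presented case. Here the remark following \Cref{prop_caractES} (relying on \cite[Corollary 7.7]{BarCarRoj2024}) gives that a $\G$-subshift is effectively closed by patterns if and only if it is effective; consequently $X_{\texttt{GM}}$ is an effective subshift, and thus an (expansive) effective $\G$-action. Were $\G$ self-simulable, this effective $\G$-action would again have to be a topological factor of a $\G$-SFT, i.e.\ sofic, contradicting the theorem once more. The only substantive content beyond the theorem is the identification of $X_{\texttt{GM}}$ as a relative effective action in the first case and as an effective action in the second, and both identifications are precisely what is recalled before the statement, so the corollary is immediate.
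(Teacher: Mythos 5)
Your proposal is correct and matches the paper's intent: the paper gives no explicit proof, stating the corollary is immediate from the definitions in \Cref{subsec_computability_shift}, and the two implications you spell out (effectively closed by patterns $\Rightarrow$ relative effective via \Cref{prop_caractES}, and $\Rightarrow$ effective in the recursively presented case via \cite[Corollary 7.7]{BarCarRoj2024}) are exactly the identifications the paper relies on. Nothing is missing; this is the same argument, merely written out.
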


We note that in the case of groups that are not recursively presented, we do not obtain that extraterrestrial groups are not self-simulable, with the original definition of \cite{BarSabSal2025s-selfsimulablegroups}. In fact, the example below provides an example of a (non-recursively presented) extraterrestrial group which is self-simulable.

\begin{example}
    Let $G$ be any finitely generated amenable simple group whose word problem is not $\Pi_2^0$. Such a group $G$ is obtained from the derived subgroup of the topological full group of a minimal $\Z$-subshift $X$. Specifically, in \cite{Matui2006} it is shown that these groups are finitely generated simple groups and that there are uncountably many examples. In~\cite{JuschenkoMonod2013}, they are shown to be amenable.
    
    The statement on the word problem follows from the fact that there are uncountably many finitely generated amenable simple groups. Alternatively, one may show using the formulas in \cite{Matui2006} that the word problem is equivalent to the complement of the language of the subshift $X$ under conjunctive reductions (reducing instances of one problem, by a Turing machine, to a finite conjunction of instances of the other problem). This is explained in more detail in \cite{salo2020four,guillon2019undecidable}, though in slightly different settings. It is well-known that the language of a subshift can be of any complexity whatsoever, so certainly one can surpass $\Pi^0_2$.

    By~\cite[Proposition 2.10]{BarSabSal2025s-selfsimulablegroups}, any finitely generated group which admits a faithful computable action on a $\Pi_1^0$ subset of $\{0,1\}^{\N}$ has a $\Pi_2^0$ word problem, hence (since $G$ is simple) the only computable actions of $G$ are given by the trivial map on a $\Pi_1^0$ set. 
    
    Consider $\Gamma = G \times G\times \Z$. By the argument above, the only computable actions of $\Gamma$ on $\Pi_1^0$ subsets are lifts of computable $\Z$-actions. By the main result of~\cite{Bar_2019_geometricsimulation} every such lift is the topological factor of a $\Gamma$-SFT. Hence it follows that $\Gamma$ is self-simulable and amenable. We shall later show in~\Cref{prop:AmenableET} that every amenable group is extraterrestrial.
\end{example}

\section{Examples of extraterrestrial groups}

The purpose of this section is to give examples of extraterrestrial groups and which thus satisfy the conclusion of~\Cref{mainthm:property_ET_groups_not_SS}

\subsection{Amenable and multi-ended groups}

In~\cite{AubBarSab_effectiveness_2017}  it is shown that amenable groups (Theorem 2.16 of \cite{AubBarSab_effectiveness_2017}) and multi-ended groups (Theorem 2.17 of \cite{AubBarSab_effectiveness_2017}) with decidable word problem are not self-simulable. We provide short alternative proofs showing that those groups are extraterrestrial, these proofs hold even if the word problem is undecidable. 

Recall that a discrete group $\G$ is amenable if and only if for every $\varepsilon>0$ and $K\Subset \G$ there is $F\Subset \G$ such that $|FK \setminus F| \leq \varepsilon|F|.$

\begin{proposition}
\label{prop:AmenableET}
Finitely generated infinite amenable groups are extraterrestrial.
\end{proposition}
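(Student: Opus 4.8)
The plan is to show that an infinite amenable group admits, for every $m$, a fixed $k$ (in fact $k=1$ will suffice) such that arbitrarily long-range UFOs exist, by exploiting the Følner condition to produce a set $F$ whose boundary is tiny relative to its interior. The three sets of a UFO will be built out of a single Følner set: roughly, $F$ plays the role of a thin separating layer and $U,O$ are matched pairs of neighboring vertices straddling this layer, with $U$ deep inside the Følner set and $O$ just outside (or vice versa). The key quantitative input is that amenability lets me make $|F|$ negligible compared to $|U|$, giving property \eqref{it:large}, while keeping $U$ and $O$ matched by edges (paths of length $1$), giving property \eqref{it:matching}.

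Concretely, I would first fix $m$ and apply the Følner condition with $K = B_{r+1}$ (the ball of radius depending on the target disconnection parameter $r$) and a sufficiently small $\varepsilon$ to obtain $F_0 \Subset \G$ with $|F_0 B_{r+1} \setminus F_0|$ small relative to $|F_0|$. The natural candidate is to let $F$ be the $r$-boundary shell of $F_0$, namely the set of points in $F_0$ within distance $r$ of the complement together with the nearby exterior layer, let $U$ be the points deep in the interior of $F_0$, and let $O$ be a matched copy of $U$ shifted by one generator. Amenability guarantees the shell $F$ is small compared to the bulk $U$, yielding $|U| \geq m|F|$; the matching between $U$ and $O$ is by single edges so $k=1$; and any path from $U$ (deep inside) to $O$ that avoids the separating shell $F$ must detour around it and hence have length at least $r$, since $F$ genuinely insulates the interior from the part of the graph reachable only through the boundary. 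I would need to choose the geometry of $U$, $O$, $F$ carefully so that $O$ lies on the far side of $F$ from $U$, forcing long avoidance paths.

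I expect the main obstacle to be property \eqref{it:disconnect}: arranging that $O$ is genuinely ``across'' the thin set $F$ from $U$, so that every $F$-avoiding path from $U$ to $O$ is forced to be long. In an abstract amenable group the Følner set need not have any nice geometric separation structure, so the clean ``under/front/over'' picture from the $\Z^d$ example is not automatic. The fix I would pursue is to take $U$ to be the set of points at distance exactly $0$ from a fixed deep region, $F$ to be a full spherical shell of width $\geq 1$ around that region at radius about $r$, and $O$ to be points just past that shell, matched to $U$ by short paths crossing $F$; then removing $F$ disconnects the inside from $O$, and any alternative path must exit and re-enter, incurring length $\geq r$. Making $|U|\geq m|F|$ simultaneously with the shell being thick enough to separate is exactly where the Følner averaging is needed, and balancing these two requirements is the delicate point. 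Once the three UFO axioms are verified for all $r$ with a single $k$, I conclude by the definition that $\Cay(\G,S)$ is extraterrestrial.
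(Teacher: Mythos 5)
Your proposal starts from the right place (a F\o lner set with small boundary, the boundary playing the role of $F$), but as written it contains a genuine error: $k=1$ is impossible. If $(u,o)\in U\times O$ are matched by a path of length $1$, they are adjacent, and since $U$, $F$, $O$ are required to be disjoint, the edge from $u$ to $o$ is itself a path from $U$ to $O$ that avoids $F$ and has length $1$. This violates condition (3) of the definition of a UFO for every $r\geq 2$; indeed no graph whatsoever admits an $(m,1,r)$-UFO with $r\geq 2$, so the matching paths must be allowed to cross $F$, forcing $k\geq 2$. Your proposed fix does not repair this: if $F$ is a shell ``at radius about $r$'' around a deep region containing $U$, and $O$ lies just past the shell, then matched pairs are at distance roughly $r$, so the matching constant grows with $r$ and the quantifier order of the definition (for all $m$ there exists $k$ such that for all $r$ there is an $(m,k,r)$-UFO) is broken, since $k$ must be fixed before $r$. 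Alternatively, if $U$ hugs the inside of the shell so that $k$ stays bounded, then $U$ is no longer the bulk of the F\o lner set and the count $|U|\geq m|F|$ is no longer controlled by your averaging argument.

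The feature you missed --- and which makes the paper's proof three lines --- is that condition (3) is a disjunction and can be satisfied \emph{vacuously}: if $F$ totally disconnects $U$ from $O$, there are no $F$-avoiding paths at all, so one single fixed triple is an $(m,k,r)$-UFO for every $r$ simultaneously, with no detour-length estimates and no $r$-dependence anywhere in the construction. Concretely, the paper takes $S$ containing the identity, a F\o lner set $U$ with $|US\setminus U|\leq \frac{1}{m}|U|$, sets $F=US\setminus U$, and lets $O$ be any subset of $\G\setminus US$ with $|O|=|U|$ (possible since $\G$ is infinite). Then $F$ is the full outer boundary of $U$, so every path from $U$ to $O$ passes through $F$; any bijection between $U$ and $O$ is a complete matching by paths of length at most $k:=\max\{d_S(g,g'): g\in U,\ g'\in O\}$, which depends only on the fixed triple and hence only on $m$. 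Your $r$-dependent F\o lner condition with $K=B_{r+1}$, the width-$r$ shells, and the detour arguments are all unnecessary once one notices that total disconnection fulfills condition (3) for all $r$ at once.
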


\begin{proof}
Let $m\geq 1$ be a positive integer and fix a finite symmetric set of generators $S$ for $\G$ which contains the identity. As $\G$ is amenable, it follows that there exists $U\Subset \G$ such that $|US\setminus U| \leq \frac{1}{m}|U|$.

Take $F=US\setminus U$ and let $O\subset \G\setminus US$ be any set of size $|U|$ (this is possible since $\G \setminus US$ is infinite). We have that $|O|=|U|\geq m|F|$ and that $U$ and $O$ lie on distinct components of $\Cay(\G,S)$ after removing $F$. If we let $k=\max\{d_S(g,g'):g\in U\textrm{ and }g'\in O\}$, it follows that $(U,F,O)$ is an $(m,k,r)$-UFO for every $r \geq 1$ and thus $\G$ is extraterrestrial.
\end{proof}

Recall that the number of ends of a finitely generated group $\G$ is the limit as $n \to \infty$ of the number of infinite connected components that occur in $\Cay(\G,S)$ (for some set of generators $S$) after removing the ball of radius $n$. Alternatively, it is the number of equivalence classes of rays in $Cay(\G,S)$, where two rays $r_1$ and $r_2$ are said to be equivalent if there exists a ray that contains infinitely many vertices of both $r_1$ and $r_2$. It is well known that the number of ends of a group does not depend upon the choice of generators. We say that $\G$ is \define{multi-ended} if it has more than one end. Equivalently, $\G$ is multi-ended if for some finite set of generators $S$, there exists $F\Subset \G$ such that $\Cay(\G,S)$ has at least two infinite components after removing $F$.

\begin{proposition}
\label{prop:MultiEndedET}
Multi-ended groups are extraterrestrial.
\end{proposition}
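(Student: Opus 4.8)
The plan is to exhibit a single UFO that works for every radius $r$ simultaneously, mirroring the strategy of the amenable case but using the ends structure instead of a Følner set. Since $\G$ is multi-ended, I fix a finite generating set $S$ (symmetric, containing $1_{\G}$ for convenience) and a finite set $F \Subset \G$ such that $\Cay(\G,S)$ has at least two infinite connected components after deleting $F$; call two of them $C_U$ and $C_O$. The key point is that a single separating set $F$ already disconnects two infinite pieces, so there is no need to grow $F$ with $r$ — this is what makes the construction succeed for all $r$ at once.

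First I would choose, for the given parameter $m$, a finite subset $U \subseteq C_U$ with $|U| \geq m|F|$; this is possible because $C_U$ is infinite. Then I would select a subset $O \subseteq C_O$ with $|O| = |U|$, again using that $C_O$ is infinite, and fix an arbitrary bijection (complete matching) between $U$ and $O$. Setting $k = \max\{ d_S(u,o) : u \in U,\ o \in O\}$, which is finite because $U \cup O$ is finite and $\Cay(\G,S)$ is connected, the matching is by paths of length at most $k$. This handles conditions~\eqref{it:large} and~\eqref{it:matching} of the UFO definition.

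For condition~\eqref{it:disconnect}, the crucial observation is that $U$ and $O$ lie in distinct connected components of $\Cay(\G,S) \setminus F$. Hence \emph{any} path from $U$ to $O$ in $\Cay(\G,S)$ must pass through a vertex of $F$. This means the disconnection condition holds vacuously for every $r$: there are simply no paths from $U$ to $O$ avoiding $F$, so the ``length at least $r$'' clause is satisfied trivially regardless of $r$. Therefore $(U,F,O)$ is an $(m,k,r)$-UFO for every $r \in \N$, with $k$ depending only on $m$ (through the choice of $U,O$), which is exactly the quantifier structure required by the definition of extraterrestrial.

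I do not expect a serious obstacle here; the multi-ended hypothesis does most of the work, just as amenability did in~\Cref{prop:AmenableET}. The one point requiring a little care is the order of quantifiers: the definition demands that $k$ be chosen after $m$ but before $r$, and indeed our $k$ depends only on the finite sets $U, O$ (hence on $m$ and the fixed $F$) and not on $r$, so the structure is respected. A minor subtlety worth stating explicitly is that $U$, $F$, $O$ must be pairwise disjoint: this is automatic since $U \subseteq C_U$ and $O \subseteq C_O$ are in the complement of $F$ and in different components, so $U \cap O = \varnothing$ and both are disjoint from $F$.
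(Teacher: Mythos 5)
Your proof is correct and follows essentially the same approach as the paper: fix one finite separating set $F$, choose $U$ and $O$ of equal size $\geq m|F|$ in two distinct infinite components, take $k$ as the maximum $d_S$-distance between them, and observe that the disconnection condition holds vacuously for every $r$. The additional remarks on quantifier order and disjointness are fine but just make explicit what the paper leaves implicit.
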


\begin{proof}
If $\G$ is multi-ended, there exists a finite subset $F$ such that $\mathrm{Cay}(\G, S)$ has more than one infinite connected component after removing $F$. Given $m\in\N$, take $U$ in a connected component of $\mathrm{Cay}(\G, S)\setminus F$ such that $|U|\geq m|F|$ and $O$ in an other infinite connected component such that $|U|=|O|$. Take $k=\max\{d_S(g,g'):g\in U\textrm{ and }g'\in O\}$. It follows that there exists a matching between $U$ and $O$ by paths of length less than $k$. Moreover, there is no path from $U$ to $O$ which avoids $F$. We deduce that for any $r\in\N$, $(U,F,O)$ is a $(m,k,r)$-UFO. Thus $\G$ is extraterrestrial.
\end{proof}

\subsection{Amenable extensions of multi-ended graphs}\label{subsec_amenable_cut}

In \cite{BarSabSal2025-tentacles}, we prove that the group $\Z\times \mathbb{F}_2$ is not self-simulable. It is an example of one ended non-amenable group which is not self-simulable. Using the language of Schreier graphs, we can state a generalization of this example.

\begin{theorem}
    \label{thm:AmenableExtension}
    Let $\G$ be a finitely generated group and $\H \leqslant \G$ an amenable subgroup. If for some symmetric generating set $S$ of $\G$, the Schreier graph $\Sch(\G,\H,S)$ is extraterrestrial, then so is $\G$.
\end{theorem}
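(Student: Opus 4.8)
The plan is to lift an extraterrestrial structure from the Schreier graph $\Sch(\G,\H,S)$ to the Cayley graph $\Cay(\G,S)$ along the quotient map $\pi\colon\G\to\H\backslash\G$, $\pi(g)=\H g$, using a Følner set of the amenable subgroup $\H$ to select a finite ``slab'' inside each (infinite) fiber $\pi^{-1}(\H g)=\H g$. By quasi-isometry invariance it is harmless to use the same $S$ for both graphs. I would fix once and for all a section $\rho\colon\H\backslash\G\to\G$ of $\pi$, so that every $g\in\G$ is written uniquely as $g=h\,\rho(\pi(g))$ with $h\in\H$; this gives a fiber coordinate $g\,\rho(\pi(g))^{-1}\in\H$. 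Since $\H$ acts on the left on $\Cay(\G,S)$ by graph automorphisms with quotient $\Sch(\G,\H,S)$, every walk in $\Sch(\G,\H,S)$ lifts to a walk of the same length in $\Cay(\G,S)$, and traversing an edge labelled $s$ from a vertex $\bar v$ multiplies the fiber coordinate on the right by a cocycle $c(\bar v,s)=\rho(\bar v)\,s\,\rho(\bar v s)^{-1}\in\H$. I would record the two consequences I need: (i) the projection of any path in $\Cay(\G,S)$ is a walk of the same length in $\Sch(\G,\H,S)$; and (ii) if $g=h\,\rho(\bar u)$ and a path of length $j$ runs from $g$ to a point $g_j$ with $\pi(g_j)=\bar f$, then the fiber coordinate of $g_j$ equals $h\,q$ with $q=\rho(\bar u)\,(g^{-1}g_j)\,\rho(\bar f)^{-1}\in\H$ and $g^{-1}g_j\in B_j$.

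Next I would arrange the quantifiers. Given $m$, apply extraterrestriality of $\Sch(\G,\H,S)$ with parameter $\overline m=2m$ to obtain some $\overline k$, and set $k=\overline k$; this is the crucial point, since $\overline k$ is produced before the separation parameter and is therefore independent of it. Now fix $r$, set $\bar r=r$, and take an $(\overline m,\overline k,r)$-UFO $(\overline U,\overline F,\overline O)$ of $\Sch(\G,\H,S)$ with complete matching $\mu\colon\overline U\to\overline O$ by paths of length at most $\overline k$. For each $\bar u\in\overline U$ let $c_{\bar u}\in\H$ be the cocycle of a chosen matching path from $\bar u$ to $\mu(\bar u)$. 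Define the finite displacement set
\[ K_r=\bigcup_{\bar u\in\overline U,\ \bar f\in\overline F}\bigl(\rho(\bar u)\,B_r\,\rho(\bar f)^{-1}\bigr)\cap\H\ \subseteq\ \H, \]
which is finite because $\overline U,\overline F$ and the ball $B_r$ are finite. Using amenability of $\H$, choose a finite $P\subseteq\H$ with $|PK_r\setminus P|\le\varepsilon|P|$ for $\varepsilon$ small enough that $\tfrac{\overline m}{1+\varepsilon}\ge m$ (e.g.\ $\varepsilon=\tfrac12$), and finally put
\[ U=P\,\rho(\overline U),\qquad F=(PK_r)\,\rho(\overline F),\qquad O=\bigcup_{\bar o\in\overline O}P\,c_{\mu^{-1}(\bar o)}\,\rho(\bar o). \]

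I would then check the UFO conditions, most of which are routine. Disjointness of $U,F,O$ is automatic since they lie over the pairwise disjoint sets $\overline U,\overline F,\overline O$. For the size bound, injectivity of $(h,\bar u)\mapsto h\rho(\bar u)$ gives $|U|=|P|\,|\overline U|\ge |P|\,\overline m\,|\overline F|$, while $|F|=|PK_r|\,|\overline F|\le(1+\varepsilon)|P|\,|\overline F|$ by the Følner choice, whence $|U|\ge m|F|$. For the matching, lifting the Schreier matching path starting at $h\rho(\bar u)\in U$ lands at $h\,c_{\bar u}\,\rho(\mu(\bar u))\in O$ along a path of length at most $\overline k=k$; as $\mu$ is a bijection and right translation by $c_{\bar u}$ is injective on $P$, the assignment $h\rho(\bar u)\mapsto h\,c_{\bar u}\,\rho(\mu(\bar u))$ is a complete matching between $U$ and $O$ by paths of length at most $k$.

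The separation condition is the heart of the argument and the step I expect to be the main obstacle, precisely because a path of bounded length in $\Cay(\G,S)$ may drift arbitrarily far inside a fiber, so $F$ cannot be a fixed slab over $\overline F$ and must instead be thickened by the $r$-dependent set $K_r$. Suppose a path of length less than $r$ runs from some $g=h\rho(\bar u)\in U$ (so $h\in P$) to a point of $O$ while avoiding $F$. Its projection is a walk of length less than $r=\bar r$ from $\overline U$ to $\overline O$, which contains a simple subpath of length less than $\bar r$; by the UFO property of $(\overline U,\overline F,\overline O)$ this subpath, hence the walk, meets $\overline F$. Let $g_j$ be a vertex of the path with $\pi(g_j)=\bar f\in\overline F$ and $j<r$. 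By consequence (ii) its fiber coordinate is $h\,q$ with $q\in K_r$, so $g_j\in(PK_r)\,\rho(\bar f)\subseteq F$, contradicting that the path avoids $F$. Hence every path from $U$ to $O$ avoiding $F$ has length at least $r$, and $(U,F,O)$ is an $(m,k,r)$-UFO. Since $r$ was arbitrary while $k$ was fixed beforehand, $\Cay(\G,S)$ is extraterrestrial. The only remaining care is the bookkeeping ensuring that the various unions are genuinely disjoint and that the lifted matching path indeed has the claimed length, both of which follow directly from the coordinate description above.
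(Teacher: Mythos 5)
Your proof is correct and follows essentially the same route as the paper: apply extraterrestriality of the Schreier graph with parameter $2m$ to fix $k$ before $r$, lift the UFO via a section, thicken the fence by the finite $r$-dependent displacement set $K_r$ (your cocycle description of $K_r$ coincides exactly with the paper's $\{h \in \H : d_S(u,hf)\leq r \text{ for some } u,f\}$), and translate by a F\o lner set of $\H$ to keep $|F|$ under control. Your sets $U = P\rho(\overline U)$, $F = (PK_r)\rho(\overline F)$, $O$ are precisely the paper's $TU'_r$, $TK_rF'_r$, $TO'_r$ with $O'_r$ chosen via lifted matching paths, and your loop-erasure remark even handles the projection-of-a-path subtlety slightly more carefully than the paper does.
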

Note that the choice of $S$ does not matter, as being extraterrestrial is quasi-isometry invariant. 
\begin{proof}
Let $\pi \colon \G \to \H \backslash \G$ be the canonical projection map given by $\pi(g)=\H g$. For $m \in \N$, define $\kappa(m)$ to be minimal such that for every $r \in \N$, there exists an $(m,\kappa(m),r)$-UFO in $\Sch(\G,\H,S)$. 

    Now, fix $m \in \N$. Let $(U_r,F_r,O_r)$ be a $(2m,\kappa(2m),r)$-UFO in $\Sch(\G,\H,S)$. Choose $U'_r \subset \G$ such that $\pi|_{U'_r}$ is a bijection onto $U_r$. Now, by the definition of distance in the Schreier graph, there exists $O'_r$ such that $\pi|_{O'_r}$ is a bijection onto $O_r$ and there is a complete matching with paths of length at most $\kappa(2m)$ between $U'_r$ and $O'_r$.

    Also choose $F'_r$ arbitrarily so that $\pi|_{F'_r}$ is a bijection onto $F_r$. Let now \[K_r = \{h \in \H : \mbox{ there exists } u \in U'_r \mbox{ and } f \in F'_r \mbox{ such that } d_S(u,hf) \leq r\}.\] In other words, $K_r$ is the set of heights of elements in the fibers of $F_r$ that are at distance at most $r$ from $U'_r$. Note that, as both $U'_r$ and $F'_r$ are finite, we also have that $K_r$ is finite.

    As $\H$ is amenable, there exists a finite set $T \Subset \H$ such that $|T K_r \setminus T | \leq |T|$. We define \[U = TU'_r,\quad O = TO'_r \mbox{ and } F = TK_rF'_r.\] By our previous definition, we have $|U| = |U'_r||T|=|U_r||T|$ and $|F| = |TK_r||F'_r| \leq 2 |T||F_r|$. Since $(U_r,F_r,O_r)$ is a $(2m,\kappa(2m),r)$-UFO, $|U_r| \geq 2m |F_r|$, and so $|U| \geq m|F|$.

    As by construction there is a complete matching $M'$ with paths of length at most $\kappa(2m)$ between $U'_r$ and $O'_r$, it follows that there is also a complete matching $M$ with paths of length at most $\kappa(2m)$ between $U$ and $O$. More explicitly, we may take $M = \{ (hu,ho) : (u,o) \in M', h \in T\}$.

    Finally, let $u = h u_0 \in U$ with $h \in T$ and $u_0 \in U'_r$ and consider a path $(hu_i)_{0\leq i\leq r}$ in $\Cay(\G,S)$ such that $hu_r \in O$. As any path in $\G$ projects onto a path in $\Sch(\G,\H,S)$ and since $(U_r,F_r,O_r)$ is a $(2m,\kappa(2m),r)$-UFO, there exists $i_0\in \{1,\dots,r\}$ for which $hu_{i_0} \in \H f$ for some $f \in F'_r$. But if $h u_{i_0} = h'f$ with $h' \in \H$, then $d_S(u_1,h^{-1}h'f) = i_0 \leq r$ and thus it must be that $h^{-1} h' \in K_r$. We conclude that $h u_{i_0} \in T K_r F'_r = F$. This means that any path of length at most $r$ between $U$ and $O$ must cross $F$, and so $(U,F,O)$ is a $(m,\kappa(2m),r)$-UFO.\end{proof}


\begin{cor}
    \label{cor:AmenableGroupExtension}
    Let $1 \to \H \to \G \to^\pi \K \to 1$ be a short exact sequence where $\H$ is amenable, $\G$ is finitely generated and $\K$ is extraterrestrial. Then $\G$ is extraterrestrial.
\end{cor}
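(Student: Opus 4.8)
The plan is to derive Corollary~\ref{cor:AmenableGroupExtension} as a direct consequence of Theorem~\ref{thm:AmenableExtension}. The key observation is that when we have a short exact sequence $1 \to \H \to \G \to^\pi \K \to 1$ with $\H$ amenable and $\G$ finitely generated, the Schreier graph $\Sch(\G,\H,S)$ is intimately related to a Cayley graph of the quotient $\K$. First I would fix a finite symmetric generating set $S$ of $\G$ and observe that, since $\H = \ker(\pi)$ is normal, the cosets $\H g$ are precisely the fibers of $\pi$, so the coset space $\H \backslash \G$ is in canonical bijection with $\K$ via $\H g \mapsto \pi(g)$.

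Under this identification, I would check that the Schreier graph $\Sch(\G,\H,S)$ is isomorphic to the Cayley graph $\Cay(\K, \pi(S))$: there is an edge between $\H g$ and $\H g'$ in the Schreier graph exactly when $\H g' = \H g s$ for some $s \in S$, which translates under $\pi$ to $\pi(g') = \pi(g)\pi(s)$, i.e.\ an edge in $\Cay(\K,\pi(S))$ with generator $\pi(s)$. Note that $\pi(S)$ is a finite symmetric generating set of $\K$ (it generates because $\pi$ is surjective and $S$ generates $\G$; it is symmetric because $S$ is and $\pi$ is a homomorphism), possibly containing $1_{\K}$ if some generator lies in $\H$, but this causes no difficulty. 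Thus $\Sch(\G,\H,S)$ is (essentially) a Cayley graph of $\K$.

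Since $\K$ is extraterrestrial by hypothesis, its Cayley graph $\Cay(\K,\pi(S))$ is extraterrestrial, and by the graph isomorphism just established, so is $\Sch(\G,\H,S)$. With $\H$ amenable and $\G$ finitely generated, all the hypotheses of Theorem~\ref{thm:AmenableExtension} are now met, and applying it directly yields that $\G$ is extraterrestrial. I expect the only mild subtlety to be the bookkeeping around $\pi(S)$ possibly containing the identity and being a multiset rather than a set, but since being extraterrestrial is a quasi-isometry invariant of bounded-degree graphs (Theorem~\ref{mainthm:property_ET_QI_invariant}) and these degeneracies do not affect the underlying metric structure up to quasi-isometry, they pose no real obstacle. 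The entire argument is essentially a translation of the normal-subgroup case of the Schreier graph into the language of the quotient Cayley graph, so no substantial new difficulty arises beyond verifying this identification.
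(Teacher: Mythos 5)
Your proposal is correct and follows exactly the paper's own proof: identify $\Sch(\G,\H,S)$ with $\Cay(\K,\pi(S))$ via the normality of $\H = \ker(\pi)$, then apply \Cref{thm:AmenableExtension}. The extra bookkeeping you note about $\pi(S)$ possibly containing the identity is a harmless refinement that the paper leaves implicit.
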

\begin{proof}
    For any finite generating set $S$ of $\G$, the Schreier graph $\Sch(\G,\H,S)$ is isomorphic to the Cayley graph $\Cay(\K,\pi(S))$. In particular, it is extraterrestrial. By~\Cref{thm:AmenableExtension}, $\G$ is extraterrestrial.
\end{proof}


\begin{cor}\label{cor:amenable_disconnections}
    Let $\G$ be a finitely generated group, and $\H \leq \G$ an amenable subgroup. If for some generating set $S$, the Schreier graph $\Sch(\G,\H,S)$ is multi-ended, then $\G$ is extraterrestrial.
\end{cor}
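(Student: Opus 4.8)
The plan is to deduce this corollary by feeding the multi-endedness hypothesis into \Cref{thm:AmenableExtension}. That theorem already reduces extraterrestriality of $\G$ to extraterrestriality of the Schreier graph $\Sch(\G,\H,S)$, using only the amenability of $\H$. Hence the single remaining task is to show that a multi-ended Schreier graph is itself extraterrestrial, and this is essentially the content of \Cref{prop:MultiEndedET}, whose proof is purely graph-theoretic and never invokes the group structure of the Cayley graph.

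First I would observe that the argument proving \Cref{prop:MultiEndedET} uses only three features of $\Cay(\G,S)$: that the graph is connected, has bounded degree, and admits a finite set $F$ of vertices whose removal leaves at least two infinite connected components. The Schreier graph $\Sch(\G,\H,S)$ has degree bounded by $|S|$ and is connected because $S$ generates $\G$; by hypothesis it is multi-ended, so such a finite separating set $F$ exists. Given $m \in \N$, I would then choose $U$ inside one infinite component of $\Sch(\G,\H,S)\setminus F$ with $|U|\geq m|F|$, and $O$ inside a different infinite component with $|O|=|U|$ (both possible since the components are infinite). Setting $k = \max\{d(u,o) : u\in U,\ o\in O\}$, which is finite because $U\times O$ is finite and the graph is connected, any bijection $U\to O$ is a complete matching by paths of length at most $k$. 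Since $U$ and $O$ lie in distinct components of the graph after removing $F$, every path from $U$ to $O$ must pass through a vertex of $F$, so the disconnection condition in the definition of a UFO holds vacuously. Thus $(U,F,O)$ is an $(m,k,r)$-UFO simultaneously for all $r\in\N$, and $\Sch(\G,\H,S)$ is extraterrestrial.

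With this established, \Cref{thm:AmenableExtension} applies verbatim and yields that $\G$ is extraterrestrial. I do not expect a genuine obstacle: the entire content is the observation that a single UFO built from a multi-ended disconnection works uniformly in $r$, together with the remark that \Cref{prop:MultiEndedET} was in fact a statement about bounded-degree graphs rather than about groups. The only points requiring care are verifying the bounded degree and connectedness of $\Sch(\G,\H,S)$ (needed both for \Cref{thm:AmenableExtension} and for the finiteness of $k$), and recording explicitly that this graph-theoretic reading of \Cref{prop:MultiEndedET} is legitimate.
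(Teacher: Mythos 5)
Your proposal is correct and takes exactly the paper's route: the paper proves this corollary in one line as an immediate consequence of \Cref{thm:AmenableExtension} and \Cref{prop:MultiEndedET}, implicitly relying, as you do explicitly, on the fact that the proof of \Cref{prop:MultiEndedET} uses only graph-theoretic data (connectedness, bounded degree, a finite separating set leaving two infinite components) and hence applies verbatim to the Schreier graph $\Sch(\G,\H,S)$. Your careful verification that the resulting $(m,k,r)$-UFO works uniformly in $r$, with $k$ depending only on $m$, simply fills in what the paper leaves unsaid.
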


\begin{proof}
    This is an immediate consequence of~\Cref{thm:AmenableExtension} and~\Cref{prop:MultiEndedET}.
\end{proof}

We will say that a group admits an amenable cut if it satisfies the hypotheses of~\Cref{cor:amenable_disconnections}. All known examples of groups that are not self-simulable fall under this corollary. However, there are cases where it is more natural to construct UFOs geometrically, as we will see in the case of hyperbolic surface groups. As a particular case of this previous corollary, the result of \cite[Proposition 3.10]{BarSabSal2025s-selfsimulablegroups} which states that $\Z \times \mathbb{F}_2$ is not self-simulable falls into the following corollary.

\begin{cor}
\label{prop:AxME}
Let $\G = \H \times \K$ such that $\H$ is amenable and $\K$ is not one-ended.  The group $\G$ is extraterrestrial.
\end{cor}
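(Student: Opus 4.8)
The plan is to exhibit $\H$ itself, embedded as the normal subgroup $\H \times \{1_{\K}\} \trianglelefteq \G$, as an amenable subgroup whose associated Schreier graph inherits the end structure of $\K$, and then to reduce the statement to the results already established, namely \Cref{cor:AmenableGroupExtension}, \Cref{cor:amenable_disconnections}, \Cref{prop:MultiEndedET} and \Cref{prop:AmenableET}. No new geometric construction of UFOs is needed; the work is entirely in choosing the right amenable subgroup and in treating the degenerate cases of the end classification.

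First I would record the elementary observation, already used in the proof of \Cref{cor:AmenableGroupExtension}, that the projection $\pi\colon \G \to \K$ onto the second factor fits into a short exact sequence $1 \to \H \to \G \to \K \to 1$ whose kernel is the amenable group $\H = \H \times \{1_{\K}\}$. Consequently, for any finite symmetric generating set $S$ of $\G$, the Schreier graph $\Sch(\G,\H,S)$ is graph-isomorphic to the Cayley graph $\Cay(\K,\pi(S))$. Since $\G$ is finitely generated, so is its quotient $\K$, and hence $\Cay(\K,\pi(S))$ is a well-defined connected graph of bounded degree whose number of ends equals the number of ends of $\K$.

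I would then split according to the number of ends of $\K$, which by hypothesis is not $1$ and therefore lies in $\{0,2,\infty\}$. If $\K$ is multi-ended, that is, has two or infinitely many ends, then \Cref{prop:MultiEndedET} shows that $\K$ is extraterrestrial, and applying \Cref{cor:AmenableGroupExtension} to the short exact sequence above yields that $\G$ is extraterrestrial; equivalently, one observes that $\Sch(\G,\H,S)\cong\Cay(\K,\pi(S))$ is multi-ended and invokes \Cref{cor:amenable_disconnections} directly. The remaining possibility is that $\K$ has zero ends, i.e.\ $\K$ is finite; then $\G = \H \times \K$ is a product of two amenable groups, hence amenable, and being infinite it is extraterrestrial by \Cref{prop:AmenableET}.

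The one point requiring care, rather than genuine difficulty, is precisely this boundary case of the end classification: the finite-$\K$ branch escapes \Cref{cor:amenable_disconnections}, since a finite graph is not multi-ended, and must instead be handled through the amenability of the whole product. Once the case split is in place, each branch reduces immediately to a result proved earlier, so I expect the only real bookkeeping to be checking that $\G$ is infinite (guaranteed unless both factors are finite) and that the hypotheses of the invoked corollaries are met.
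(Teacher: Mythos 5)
Your proposal is correct and follows essentially the same route as the paper: the paper's proof also considers the short exact sequence $1 \to \H \to \G \to \K \to 1$, handles the zero-ended case via amenability of $\G$ and \Cref{prop:AmenableET}, and handles the multi-ended case via \Cref{prop:MultiEndedET} together with \Cref{cor:AmenableGroupExtension}. Your added remark that $\G$ must be infinite to invoke \Cref{prop:AmenableET} is a fair point of care that the paper glosses over, but it changes nothing of substance.
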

\begin{proof}
    Consider the short exact sequence $1 \to \H \to \G \to^\pi \K \to 1$ where $\pi$ is the natural projection to the second coordinate. If $\K$ is zero-ended, then it is finite and so $\G$ is amenable. By~\Cref{prop:AmenableET}, it is extraterrestrial.  Otherwise, $\K$ is multi-ended and so by~\Cref{prop:MultiEndedET} it is extraterrestrial. We conclude with ~\Cref{cor:AmenableGroupExtension}.
\end{proof}

\subsection{Amalgamated free products and HNN extensions}

In this section we will provide natural conditions under which amalgamated free products and HNN extensions are extraterrestrial. 

Given groups $\G_1,\G_2,\H$ and embeddings $\phi_1\colon \H\to \G_1$, $\phi_2\colon \H \to \G_2$ the \define{amalgamated free product} of $\G_1$ and $\G_2$ along $\H$ is given by \[  \G_1 \ast_{\H}\G_2 = \langle \G_1,\G_2\; |\; \phi_1(h)=\phi_2(h) \mbox{ for all } h \in \H\}.  \]

A normal form for an amalgamated free product can be obtained as follows. For $i \in \{1,2\}$, choose a set of representatives $T_i$ of the cosets $\phi_i(\H) \backslash \G_i$ which contains the identity. A normal form is a sequence $x_0,\dots,x_n$ such that $x_0 \in \phi_1(\H)$ and for $j \geq 1$ each $x_j$ is either in $T_1\setminus\{1\} $ or $T_2\setminus\{1\}$ in an alternating way. Every element is equal to a normal form, and distinct normal forms correspond to distinct elements, see~\cite{Lyndon2001}.

\begin{proposition}\label{prop:amalgamated_free_ET}
    Let $\G_1,\G_2,\H$ be finitely generated groups with $\H$ amenable. Let $\phi_1 \colon \H \to \G_1$ and $\phi_2 \colon \H \to \G_2$ be proper embeddings. Then the amalgamated free product $\G_1 \ast_{\H}\G_2$ is extraterrestrial. 
\end{proposition}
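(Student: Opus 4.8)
The plan is to show that for a suitable generating set the Schreier graph $\Sch(\G_1\ast_\H\G_2,\H,S)$ is multi-ended, after which \Cref{cor:amenable_disconnections} applies directly since $\H$ is amenable. Throughout I identify $\H$ with its common image $\phi_1(\H)=\phi_2(\H)$ inside $\G=\G_1\ast_\H\G_2$, and I fix a symmetric generating set $S=S_1\cup S_2$ where $S_i$ generates $\G_i$; since $\G_1,\G_2$ are finitely generated, so is $\G$. Note that amenability of $\H$ plays no role in the geometric argument below; it is used only in the final invocation of the corollary.

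The main tool is the normal form theorem for amalgamated products (see~\cite{Lyndon2001}). A right coset $\H g$ with $g\notin\H$ has a unique normal form $\H x_1\cdots x_n$ with $n\geq 1$, where the $x_j$ are nontrivial coset representatives in $T_1\setminus\{1\}$ or $T_2\setminus\{1\}$ alternating between the two factors. This lets me assign to each such coset a \emph{type} in $\{1,2\}$ according to whether the first syllable $x_1$ lies in $\G_1\setminus\H$ or $\G_2\setminus\H$; the base vertex $\H$ itself (syllable length $0$) receives no type. The properness of both embeddings guarantees $\G_i\setminus\H\neq\varnothing$, so both types occur.

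The key geometric observation is that right multiplication by a generator only alters the \emph{last} syllable of a normal form, and therefore cannot change the first syllable unless the syllable length first drops to $0$. Concretely, I would verify by a short case analysis on whether $s\in S_1$ or $s\in S_2$ that an edge $\H g\to\H gs$ between cosets of positive syllable length either preserves the type or has $\H$ as an endpoint. Consequently, removing the single vertex $\{\H\}$ from $\Sch(\G,\H,S)$ separates the cosets of type $1$ from those of type $2$: no path avoiding $\H$ joins a type-$1$ coset to a type-$2$ coset.

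It then remains to exhibit an infinite component of each type. Starting from a length-one coset $\H x_1$ with $x_1\in\G_1\setminus\H$, I would alternately append syllables $x_2\in\G_2\setminus\H$, $x_3\in\G_1\setminus\H$, $\dots$ (possible precisely because both embeddings are proper), building an infinite ray all of whose vertices have type $1$; the finite sub-paths realizing each appended syllable stay within type $1$ by the same case analysis, so this ray lies in a single infinite component of $\Sch(\G,\H,S)\setminus\{\H\}$. The symmetric construction yields an infinite type-$2$ component, so the Schreier graph has at least two ends and \Cref{cor:amenable_disconnections} gives that $\G$ is extraterrestrial. The main technical obstacle is the bookkeeping in the case analysis of the previous two steps, namely tracking how the normal form of $\H g$ transforms under right multiplication, in particular handling the reductions when $x_1 s\in\H$ or when $s\in\H$; but this is a routine consequence of the normal form theorem rather than a genuine difficulty.
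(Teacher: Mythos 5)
Your proof is correct and follows essentially the same route as the paper: with $S=S_1\cup S_2$, the paper likewise uses the normal form theorem to show that removing the single vertex $\H$ from $\Sch(\G_1\ast_\H\G_2,\H,S)$ separates cosets according to the factor containing the first syllable (the paper instantiates this with the explicit rays $\bigl(\H(g_1g_2)^n\bigr)_n$ and $\bigl(\H(g_2g_1)^n\bigr)_n$ for $g_i\in T_i\setminus\{1\}$, whose existence uses properness exactly as your $|T_i|\geq 2$ does), and then concludes by \Cref{cor:amenable_disconnections}. One mild imprecision to note: right multiplication by a generator does not alter \emph{only} the last syllable, since an $\H$-element created there propagates leftward and may change every coset representative in the normal form; however it preserves the factor of each syllable, which is all your type argument requires, and the paper leaves this same case analysis implicit as ``easy to show.''
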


\begin{proof}
    Take $S_1,S_2$ finite symmetric set of generators of $\G_1,\G_2$ respectively and take $S=S_1\cup S_2$. It is clear that $S$ generates $\G_1 \ast_{\H}\G_2$. 
    For each $i \in \{1,2\}$ choose a set of representatives $T_i$ of the cosets $\phi_i(\H) \backslash \G_i$ which contains the identity. As the embeddings are proper, we have that $|T_i|\geq 2$.
    
    By~\Cref{cor:amenable_disconnections} it suffices to show that the Schreier graph $\Sch(\G_1 \ast_{\H}\G_2, \H,S)$ is multi-ended. Take $g_1 \in T_1\setminus\{1\}$ and $g_2 \in T_2\setminus\{1\}$. From the structure of the normal forms, it is easy to show that for any $n \in \N, \phi_1(\H) (g_1 g_2)^n$ and $\phi_1(\H) (g_2 g_1)^n$ lie in distinct connected components of $\Sch(\G_1 \ast_{\H}\G_,\phi_1(\H),S)$ after removing $\phi_1(\H)$. Thus the rays $(\phi_1(\H) (g_1 g_2)^n)_{n \in \N}$ and $(\phi_1(\H) (g_2 g_1)^n)_{n \in \N}$ are non equivalent in $\Sch(\G_1 \ast_{\H}\G_2,\H,S)$, and it is hence multi-ended.\end{proof}

    We remark that~\Cref{prop:amalgamated_free_ET} may fail if one of the embeddings is not proper. For instance, if we take $\G_1=\H=\Z$, $\phi_1$ the identity, $\G_2 = F_2\times F_2$ and $\phi_2\colon \Z \to F_2 \times F_2$ any embedding, then $\G_1 \ast_{\H}\G_
2 \cong F_2\times F_2$ which is not extraterrestrial (due to~\Cref{mainthm:property_ET_groups_not_SS} and the fact that it is self-simulable).

Next we consider the case of HNN-extensions. Let $\G$ be a group, $\H_1,\H_2$ two subgroups of $\G$ and $\phi\colon \H_1 \to \H_2$ an isomorphism. The \define{HNN-extension} of $\G$ relative to $\phi$ is the group \[ \G^{\ast\phi} = \langle \G, t \;\mid\; t^{-1} h t = \phi(h) \mbox{ for all } h \in \H_1\rangle.  \]
In the presentation above we call $t$ the \define{stable generator}. We shall need the following lemma of Britton whose proof can be found for instance in~\cite{Lyndon2001}.

\begin{lemma}[Britton's lemma] Let $n \geq 1$ and $g = g_0t^{\epsilon_1}g_1\dots t^{\epsilon_n}g_n \in \G^{\ast \phi}$ where each $g_i \in \G$ and $\epsilon_i \in 
\{-1,+1\}$. If $g=1_{\G^{\ast \phi}}$, then there exists a subword of the form $t^{-1}g_it$ with $g_i \in \H_1$ or $tg_jt^{-1}$ with $g_j \in \H_2$. 
\end{lemma}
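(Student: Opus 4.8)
The plan is to prove the stronger statement usually called the \emph{normal form theorem}, of which Britton's lemma is the contrapositive: every \emph{reduced} word---one containing no pinch---represents a nontrivial element of $\G^{\ast\phi}$, and in fact its number $n$ of stable letters is an invariant. Since Britton's lemma asserts exactly that a word equal to $1_{\G^{\ast\phi}}$ with $n\geq 1$ cannot be reduced, it suffices to attach to each reduced word a certificate of nontriviality. I would build such certificates by the \emph{van der Waerden trick}: construct by hand an action of $\G^{\ast\phi}$ on a set of formal normal forms and show that a reduced word displaces a fixed base point.

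First I would fix right transversals: let $T_1\subseteq\G$ be a set of representatives of the right cosets $\H_1\backslash\G$ with $1_\G\in T_1$, and similarly $T_2\subseteq\G$ for $\H_2\backslash\G$ with $1_\G\in T_2$. Call a sequence $(k,(t^{\epsilon_1},c_1),\dots,(t^{\epsilon_n},c_n))$ a \emph{normal form} if $k\in\G$, each $c_i$ lies in $T_1$ when $\epsilon_i=-1$ and in $T_2$ when $\epsilon_i=+1$, and there is no pinch, i.e.\ no index $i$ with $\epsilon_i=-\epsilon_{i+1}$ and $c_i=1_\G$; let $\mathcal N$ be the set of all normal forms. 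I would then define bijections of $\mathcal N$ realizing the generators: an element $a\in\G$ acts by $k\mapsto ak$ on the leading entry; the letter $t$ acts by writing $k=h_2c$ with $h_2\in\H_2$ and $c\in T_2$ and prepending the syllable $(t,c)$ after replacing $k$ by $\phi^{-1}(h_2)$, using $th_2=\phi^{-1}(h_2)t$; and $t^{-1}$ acts symmetrically via $t^{-1}h_1=\phi(h_1)t^{-1}$ with $h_1\in\H_1$ and $c\in T_1$.

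The hard part will be the bookkeeping in verifying that these prescriptions are well defined: that the operators for $t$ and $t^{-1}$ are mutually inverse bijections of $\mathcal N$, and that together with the $\G$-action they satisfy the defining relation $t^{-1}ht=\phi(h)$ for $h\in\H_1$, so that they assemble into a homomorphism $\rho\colon\G^{\ast\phi}\to\mathrm{Sym}(\mathcal N)$. The delicate case is \emph{pinch cancellation}: when prepending $t$ to a normal form whose first syllable is $t^{-1}$ and whose representative there is $1_\G$, one must collapse the two stable letters through $\phi$ and check that the output is again a normal form with two fewer syllables. Two facts make this tractable: the decomposition $k=h_2c$ relative to $T_2$ is unique, and the only element of $T_i$ lying in $\H_i$ is $1_\G$, so that a pinch is detectable purely from the combinatorial data $(c_i,\epsilon_i,\epsilon_{i+1})$.

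Once $\rho$ is in hand the conclusion is routine. Applying a reduced word $g=g_0t^{\epsilon_1}g_1\cdots t^{\epsilon_n}g_n$ to the base normal form $(1_\G)$ and inducting on $n$, I would show that $\rho(g)\cdot(1_\G)$ is the normal form canonically attached to $g$, carrying exactly $n$ stable letters: no collapse is ever triggered precisely because $g$ has no pinch. Hence for $n\geq 1$ we get $\rho(g)\cdot(1_\G)\neq(1_\G)$, so $g\neq 1_{\G^{\ast\phi}}$, and the contrapositive is Britton's lemma. As an alternative I could appeal to Bass--Serre theory, letting $\G^{\ast\phi}$ act on its Bass--Serre tree (one orbit of vertices, one orbit of edges): a pinch is exactly a backtrack, so a reduced word traces a non-backtracking edge-path of positive length and moves the base vertex. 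However, proving that the relevant graph is a tree rests on the same combinatorics, so I would keep the self-contained permutation representation, which is the route taken in~\cite{Lyndon2001}.
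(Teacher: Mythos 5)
The paper does not prove Britton's lemma itself---it explicitly defers to the cited reference \cite{Lyndon2001}---and your proposal is precisely the normal form theorem proof given there: the van der Waerden permutation representation on normal forms, with the coset conventions ($T_1$ after $t^{-1}$, $T_2$ after $t$), the pinch detection via the representative being $1_\G$ together with a sign change, and the induction showing a reduced word sends the base point to a form with exactly $n$ stable letters, all correctly oriented relative to the relation $t^{-1}ht=\phi(h)$. So your argument is correct and takes essentially the same route as the paper's (cited) proof.
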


This can be refined to a normal form (with unique representations for elements), but this lemma suffices for our purposes.

We obtain the following proposition.

\begin{proposition}\label{prop:HNN_ET}
    Let $\G$ be a finitely generated group and $\phi\colon \H_1 \to \H_2$ an isomorphism between amenable subgroups of $\G$. Then  $\G^{\ast\phi}$ is extraterrestrial.
\end{proposition}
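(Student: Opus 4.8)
The plan is to apply \Cref{cor:amenable_disconnections} to the amenable subgroup $\H_1 \leqslant \G \leqslant \G^{\ast\phi}$. So it suffices to exhibit a finite symmetric generating set for which the Schreier graph of $\H_1$ in $\G^{\ast\phi}$ is multi-ended. I would take $S' = S \cup \{t,t^{-1}\}$ with $S$ a finite symmetric generating set of $\G$, and prove the stronger statement that deleting the single vertex $\H_1$ (the coset of the identity) from $\Sch(\G^{\ast\phi},\H_1,S')$ already produces at least two infinite connected components.

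First I would set up the bookkeeping. Given a right coset $\H_1 g$ of positive $t$-length, write $g$ in reduced form $g = g_0 t^{\epsilon_1} g_1 \cdots t^{\epsilon_n} g_n$ (no pinch $t^{-1}g_it$ with $g_i\in\H_1$ nor $tg_jt^{-1}$ with $g_j\in\H_2$, as in Britton's lemma). Using the HNN normal form, which refines Britton's lemma and makes the $t$-length $n$ and the exponent sequence $(\epsilon_1,\dots,\epsilon_n)$ invariants of the element, I note that left multiplication by $\H_1$ only replaces $g_0$ by $hg_0$ and preserves reducedness, since no pinch involves the leading syllable $g_0$ in isolation. Hence the first exponent $\epsilon_1$ and the predicate ``$g_0 \in \H_1$'' descend to well-defined functions on the set of cosets of $t$-length at least $1$.

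The key step is then to study the two sets $A^{\pm} = \{\,\H_1 g : g \text{ has } t\text{-length} \geq 1,\ g_0 \in \H_1,\ \epsilon_1 = \pm 1\,\}$ and show that every edge of $\Sch(\G^{\ast\phi},\H_1,S')$ leaving $A^{+}$ terminates at $\H_1$ (and symmetrically for $A^{-}$). For a coset in $A^{+}$ I may take $g_0 = 1$, so $g = t^{\epsilon_1}g_1\cdots t^{\epsilon_n}g_n$. Right multiplication by a generator only alters the last syllable, so it leaves $g_0$ and $\epsilon_1$ untouched whenever the $t$-length stays positive, and a pinch reduces the $t$-length by at most one while absorbing only into the second-to-last $\G$-syllable. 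Thus the leading syllable is frozen, and the single way to reach $t$-length $0$ is to collapse a length-one coset $\H_1 t g_1$ with $g_1 \in \H_2$ by right multiplication by $t^{-1}$, which produces $\H_1 \phi^{-1}(g_1) = \H_1$ because $\phi^{-1}(g_1) \in \H_1$. Consequently, once the vertex $\H_1$ is removed, $A^{+}$ and $A^{-}$ are disjoint unions of connected components; since $(\H_1 t^{n})_{n\geq 1} \subseteq A^{+}$ and $(\H_1 t^{-n})_{n\geq 1} \subseteq A^{-}$ are infinite rays, the graph has at least two infinite components, hence is multi-ended, and $\G^{\ast\phi}$ is extraterrestrial.

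The step I expect to be the main obstacle is exactly this confinement claim, which is where a naïve transfer of the amalgamated-product proof of \Cref{prop:amalgamated_free_ET} breaks down. There the $t$-length-$0$ cosets collapse to the single vertex $\H_1$, whereas here there may be infinitely many length-$0$ cosets $\H_1 g_0$, indexed by $\H_1 \backslash \G$, forming a large connected region that could a priori reconnect $A^{+}$ with $A^{-}$ while avoiding $\H_1$. The resolution I would stress is that right multiplication cannot modify the leading syllable $g_0$ without first collapsing the whole word down to $t$-length $0$, and that collapse always lands on $\H_1$; so the invariant ``$g_0\in\H_1$'' traps the component of $\H_1 t$ inside $A^{+}$. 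I would therefore be careful to justify with Britton's lemma both that one right multiplication triggers at most a single pinch and that this pinch never cascades into $g_0$ unless the coset already has $t$-length one.
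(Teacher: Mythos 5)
Your overall plan coincides with the paper's: both proofs show that $\Sch(\G^{\ast\phi},\H_1,S)$ with $S = F \cup \{t,t^{-1}\}$ is multi-ended by proving that the single vertex $\H_1$ separates $\H_1 t$ from $\H_1 t^{-1}$, and then invoke \Cref{cor:amenable_disconnections}. The execution differs: the paper argues by contradiction on a word $atx_1\cdots x_n t = 1_{\G^{\ast\phi}}$ (with $a \in \H_1$) representing a path avoiding $\H_1$, repeatedly performs the leftmost Britton pinch, and tracks the replacements backwards to exhibit a strict prefix $u_0$ of the original word with $\underline{u_0} \in \H_1$, i.e.\ the path did cross $\H_1$ after all. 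You instead set up coset invariants of the HNN normal form and prove a confinement statement; this is more structural and, once correct, arguably cleaner, while the paper's rewriting argument avoids any well-definedness discussion at the cost of bookkeeping about how pinches interact with prefixes.

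However, your argument as written has a genuine error in the $A^-$ half, rooted in the asymmetry of the relation $t^{-1}ht=\phi(h)$. First, the predicate ``$g_0 \in \H_1$'' is \emph{not} well-defined on cosets when $\epsilon_1 = -1$: in a reduced form with leading exponent $-1$ the syllable $g_0$ is only determined up to $g_0 \mapsto g_0 b$ with $b \in \H_2$ (since $bt^{-1} = t^{-1}\phi^{-1}(b)$), so when $\H_2 \not\subseteq \H_1$ your set $A^-$ depends on the chosen representative; the honest invariant there is the double coset $\H_1 g_0 \H_2$. Worse, the ``symmetric'' confinement claim is false: from $\H_1 t^{-1} h$ with $h \in \H_1$, right multiplication by $t$ yields $\H_1 t^{-1} h t = \H_1 \phi(h)$, a $t$-length-zero coset which in general differs from $\H_1$ --- the pinch $t^{-1}(\cdot)t$ lands in $\H_2$, and only the pinch $t(\cdot)t^{-1}$ lands in $\H_1$. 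So your assertion that ``collapse always lands on $\H_1$'' holds only on the $A^+$ side. Fortunately that is the only side you need, and the repair is one line: your $A^+$ analysis is correct (for $\epsilon_1 = +1$ the ambiguity is $g_0 \mapsto g_0 a$ with $a \in \H_1$, so $A^+$ is well-defined; a single generator triggers at most one pinch, at the terminal syllable, so it cannot cascade; and the unique collapse to $t$-length zero, namely $\H_1 t g_1 \cdot t^{-1} = \H_1\phi^{-1}(g_1)$ with $g_1 \in \H_2$, does land on $\H_1$). Hence after deleting $\H_1$ the component of $\H_1 t$ is contained in $A^+$; since the exponent sequence \emph{is} a genuine coset invariant, $\H_1 t^{-1}$ and the infinite ray $(\H_1 t^{-n})_{n \geq 1}$ have $\epsilon_1 = -1$ and therefore lie in a different infinite component. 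Drop $A^-$ entirely and your proof goes through.
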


\begin{proof}
    Let $t$ be the stable generator of $\G^{\ast \phi}$, $F$ a finite generating set of $\G$ and let $S = F\cup \{t,t^{-1}\}$. Recall that for some word $w$ on the generators we write $\underline{w}$ for the corresponding element in the group.
    
    Consider the Schreier graph $\Sch(\G^{\ast\phi},\H_1,S)$. Notice that if we remove the vertex $\H_1$, the rays $\{\H_1 t^n\}_{n \geq 1}$ and $\{\H_1 t^{-n}\}_{n \geq 1}$ lie in the same connected component as $\H_1 t$ and $\H_1 t^{-1}$ respectively, thus it suffices to show that $\H_1 t$ and $\H_1 t^{-1}$ lie in distinct connected components.

    Let $x_1\dots x_n \in S^*$ be such that $\H_1 t x_1\dots x_n = \H_1 t^{-1}$. It follows that there is $a \in \H_1$ such that \[a t x_1\dots x_n t = 1_{\G^{\ast \phi}}.  \]
    By Britton's lemma, either there exists a subword of $w_0 = a t x_1\dots x_n t$ of the form $t^{\epsilon}xt^{-\epsilon}$ with $x \in F^*$ such that either $\epsilon=-1$ and $\underline{x}\in \H_1$, or $\epsilon=1$ and $\underline{x} \in \H_2$. It follows that we may replace any of these words by some word in $F^*$ and obtain another word which represents $1_{\G^{\ast \phi}}$ and has one less occurrence of both $t$ and $t^{-1}$. Iteratively define $w_{i+1}$ from $w_i$ by doing the leftmost possible replacement as above.

    As the number of pairs of $t,t^{-1}$ is reduced with each iteration, it follows that there is some $k \geq 0$ such that there is a prefix $u_k$ of $w_k$ that has the form $atxt^{-1}$ with $\underline{x} \in \H_2$. In particular, $\underline{atxt^{-1}}\in \H_1$. Write $w_k = u_kv_k$ the replacements described above and note that, as we are replacing a word on $F$, the word we are going to replace is either completely contained in $u_k$ or $v_k$, thus we may write $w_{k-1}=u_{k-1}v_{k-1}$ by performing the replacement in the corresponding portion. Note that $\underline{u_k}=\underline{u_{k-1}}$. Iterating this procedure we obtain a decomposition $w_0 = u_0v_0$ such that $\underline{u_0}=\underline{u_k}\in \H_1$. Furthermore, $u_0$ ends with $t^{-1}$, thus it is a strict prefix of $w_0$. Therefore any path from $\H_1t$ to $\H_1 t^{-1}$ must cross $\H_1$.
    It follows that $\Sch(\G^{\ast\phi},\H_1,S)$ is multi-ended, and so by~\Cref{cor:amenable_disconnections}, $\G$ is extraterrestrial.\end{proof}

\begin{example}\label{ex:BS}
    Given $m,n\in \Z\setminus \{0\}$, the Baumslag-Solitar group $\BS(m,n)$ is the group given by the presentation \[ \BS(m,n) = \langle a,b \;\mid\; ba^mb^{-1}=a^n\rangle. \]
    $\BS(m,n)$ can be seen as the HNN-extension of $\Z = \langle a\rangle$ relative to the isomorphism $\phi\colon \langle a^m \rangle \to \langle a^n \rangle$ that sends $a^m$ to $a^n$. This group acts cocompactly by automorphisms without inversions and with infinite cyclic edge and vertex stabilizers on its Bass-Serre tree. In~\Cref{fig:BStree} we show the Bass-Serre tree in the case $m=1, n=2$. 
\end{example}

\begin{figure}[ht!]
    \centering
    \begin{tikzpicture}
        \draw[thick, gray] (2,0) to (3,0.33);
         \draw[thick, gray] (2,0) to (3,-0.33);
        \draw[thick] (2,0) to (0,-1);
        \draw[thick] (0,-1) to (-2,-2);
        \draw[thick] (0,-1) to (2,-2);
        \draw[thick] (-2,-2) to (-3,-3);
        \draw[thick] (-2,-2) to (-1,-3);
        \draw[thick] (2,-2) to (1,-3);
        \draw[thick] (2,-2) to (3,-3);
        \foreach \j in {-3,-1,1,3}{
        \draw[thick, gray] (\j,-3) to (\j-0.5,-3.5);
        \draw[thick, gray] (\j,-3) to (\j+0.5,-3.5);
        }
        \draw[fill = black] (2,0) circle (0.1);
        \draw[fill = black] (0,-1) circle (0.1);
        \draw[fill = black] (-2,-2) circle (0.1);
        \draw[fill = black] (2,-2) circle (0.1);
        \draw[fill = black] (-3,-3) circle (0.1);
        \draw[fill = black] (-1,-3) circle (0.1);
        \draw[fill = black] (1,-3) circle (0.1);
        \draw[fill = black] (3,-3) circle (0.1);
        \node at (2-0.5,0+0.3) {$b^{-1}\langle a\rangle$};
        \node at (0-0.4,-1+0.2) {$\langle a\rangle$};
        \node at (-2-0.5,-2) {$b\langle a\rangle$};
        \node at (2-0.6,-2) {$ab\langle a\rangle$};
        \node at (-3-0.6,-3) {$b^2\langle a\rangle$};
        \node at (-1-0.7,-3) {$bab\langle a\rangle$};
        \node at (1-0.7,-3) {$ab^2\langle a\rangle$};
        \node at (3-0.7,-3) {$abab\langle a\rangle$};
        
    \end{tikzpicture}
    \caption{A finite portion of the Bass-Serre tree of $\BS(1,2)$}
    \label{fig:BStree}
\end{figure}
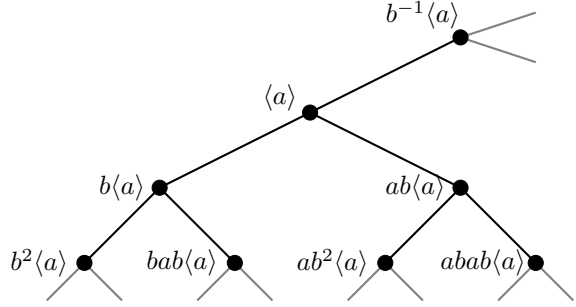
From the cases of amalgamated free products and HNN-extensions, we can deduce a general proposition about groups acting on trees. For this, we introduce the notion of graph of groups. A \define{graph of groups} is the data of 
\begin{enumerate}
    \item An oriented graph $G = (V,E)$ with origin and end maps $o,t \colon E \to V$ and inversion map $i \colon E \to E$.

    \item A family of \define{vertex groups} $(\G_v)_{v \in V}$.
    \item A family of \define{edge groups} $(\H_e)_{e \in E}$ with monomorphisms $\psi_{e,o} \colon \H_e \to \G_{o(e)}$ and $\psi_{e,t} \colon \H_e \to \G_{t(e)}$.
\end{enumerate}
Then, if $T$ is a covering tree of $G$, the \define{fundamental group} of the graph of groups is the group generated by the vertex groups $(\G_v)_{v \in V}$ and additional generators $s_e$ for every $e\in E$ subjected to the following relations. 
\begin{enumerate}
    \item $\forall e \in E, s_{i(e)} = s_e^{-1}.$
    \item $\forall e \in T, s_e = 1.$
    \item $\forall e \in E, \forall x \in \G_e, s_e^{-1}\psi_{e,o}(x) s_e = \psi_{e,t}(x)$.
\end{enumerate}
The choice of a covering tree does not change the fundamental group up to isomorphism \cite[\textsection 5, Proposition 20]{Serre1980}. 
\begin{example}
    The fundamental group of a segment of groups \tikz[baseline=-.7ex]{
		\node[label={$\G_1$},shape=circle,draw=black] (A) at (0,0) {};
		\node[label={$\G_2$},shape=circle,draw=black] (B) at (1,0) {};
		\path (A) edge[->] node [above] {$\H$} (B);
	} is the free product with amalgamation $\G_1 \ast_\H \G_2$. The fundamental group of a loop of groups \tikz[baseline=-.7ex]{
		\node[label=$\G$,shape=circle,draw=black] (A) at (0,0) {};
		\path (A) edge [loop right] node [fill=white] {$\H$} (A);
	} is the HNN-extension $\G^{\ast \psi_{e,t}\circ \psi_{e,o}^{-1}}$.
\end{example}

Fundamental groups of graphs of groups hence are a generalization of amalgamated free products and HNN-extensions, moreover, they are built inductively from these two constructions.

If $\G$ acts cocompactly by automorphisms without edge inversions on a tree $T$, then there exists a surjective graph morphism $\pi:T \to T/\G$, and $T/\G$ is finite. Since $\G$ acts without edge inversions, an element that stabilizes an edge also stabilizes its ends. Hence, for every $e\in E(T)$, there are monomorphisms $\psi_{e,o}$ and $\psi_{e,t}$ from $\Stab(e)$ to the stabilizers of its ends. We can then endow $G = T/\G$ with a graph of groups structure by setting $G_{\pi(v)} = \Stab(v)$ for every $v\in V(T)$ and $G_{\pi(e)} = \Stab(e)$ for every $e \in E(T)$ with the monomorphisms induced by the $\psi_{e,o},\psi_{e,t}$. The structure theorem of groups acting on trees ~\cite[\textsection 5, Theorem 13]{Serre1980} states that the graph of groups thus defined has $\G$ as a fundamental group.

\begin{proposition}\label{prop:action_on_tree}
Let $\G$ be a finitely generated group. Suppose that $\G$ acts cocompactly  by automorphisms without edge inversions on a tree. Suppose further that no vertex is fixed by $\G$ and that edge stabilizers are amenable. Then $\G$ is extraterrestrial.
\end{proposition}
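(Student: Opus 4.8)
The plan is to produce an amenable subgroup of $\G$ whose Schreier graph is multi-ended and then to invoke~\Cref{cor:amenable_disconnections}. The structure theorem for groups acting on trees (recalled above, see~\cite{Serre1980}) realizes $\G$ as the fundamental group of the finite graph of groups carried by $G = T/\G$, whose vertex groups are the vertex stabilizers, whose edge groups are the edge stabilizers, and the latter being amenable by hypothesis. Since $\G$ fixes no vertex, the tree $T$ has at least one edge and the splitting is nontrivial; after replacing $T$ by its minimal $\G$-invariant subtree I may assume the action is minimal, so that removing any edge of $T$ leaves two infinite half-trees. I would then fix such an edge $e$ with endpoints $v,w$ and set $\H = \Stab(e)$, an amenable subgroup of the finitely generated group $\G$. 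It remains to prove that $\Sch(\G,\H,S)$ is multi-ended for some (hence any) generating set $S$.

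First I would identify the vertices of $\Sch(\G,\H,S)$, namely the cosets $\H g$, with the edges in the orbit $\G e$. Removing $e$ splits $T$ into a half-tree $T_0 \ni v$ and a half-tree $T_1 \ni w$, and this induces a partition of the cosets $\H g \neq \H$ according to which half-tree the associated orbit edge enters first along the geodesic issued from $e$. The heart of the argument is the observation, valid in any fundamental group of a graph of groups, that right multiplication by a generator alters only the terminal syllable of the Bass--Serre normal form of $g$, and therefore preserves this initial direction out of $e$ as long as the coset differs from $\H$ itself. This is exactly the generalization of the reasoning used in~\Cref{prop:amalgamated_free_ET}, where the rays $(g_1g_2)^n$ and $(g_2g_1)^n$, distinguished by their first syllable, become separated upon deleting the amalgamated subgroup, and in~\Cref{prop:HNN_ET}, where Britton's lemma separates $t^n$ from $t^{-n}$ upon deleting $\H_1$.

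Consequently, any path in $\Sch(\G,\H,S)$ joining a coset whose orbit edge lies on the $T_0$ side to one whose orbit edge lies on the $T_1$ side must pass through the vertex $\H$. Since the action is minimal and cocompact there is a hyperbolic element whose axis crosses $e$, so each half-tree contains infinitely many edges of $\G e$; hence deleting the single vertex $\H$ (equivalently, any finite ball around it) leaves at least two infinite connected components. Thus $\Sch(\G,\H,S)$ is multi-ended, and~\Cref{cor:amenable_disconnections} yields that $\G$ is extraterrestrial.

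The main obstacle I anticipate is making the separation rigorous: one needs the normal form theory for fundamental groups of graphs of groups (simultaneously generalizing the normal form for amalgamated products and Britton's lemma for HNN extensions) to justify that the initial syllable, equivalently the initial direction out of $e$ in $T$, is invariant under right multiplication by generators until the trivial coset is reached. A cleaner-looking but more bureaucratic alternative would be to argue inductively, collapsing the graph of groups edge by edge so as to present $\G$ as an iterated amalgamated free product and HNN extension and then quoting~\Cref{prop:amalgamated_free_ET} and~\Cref{prop:HNN_ET} directly; the difficulty there is that the vertex and edge groups that appear need not be finitely generated, whereas those propositions are phrased for finitely generated factors, so some care (or a mild strengthening of their statements) would be needed. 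For this reason I favour the direct Schreier-graph route, which uses only that $\G$ is finitely generated and that $\H$ is amenable, precisely the hypotheses demanded by~\Cref{cor:amenable_disconnections}.
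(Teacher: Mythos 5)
Your overall strategy is genuinely different from the paper's: you aim to exhibit the edge stabilizer $\H=\Stab(e)$ as an amenable subgroup whose Schreier graph is multi-ended and then apply~\Cref{cor:amenable_disconnections}, whereas the paper uses the structure theorem only to present $\G$ as the fundamental group of a finite graph of groups with amenable edge groups, observes that $\G$ (not being a vertex group, since no vertex is fixed) is therefore an amalgamated free product over a proper amenable subgroup or an HNN extension over an amenable subgroup, and concludes by quoting~\Cref{prop:amalgamated_free_ET} and~\Cref{prop:HNN_ET} --- that is, exactly the ``collapsing'' route you set aside. Unfortunately, the heart of your direct argument fails. The only well-defined identification of right cosets with orbit edges is $\H g\mapsto g^{-1}e$ (the map $\H g \mapsto ge$ is not well defined), and under it a Schreier step $\H g\to\H gs$ moves the edge by the \emph{global} automorphism $s^{-1}$, which can carry it across $e$: the side of the orbit edge is not preserved away from the vertex $\H$. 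Concretely, let $\G=F_2=\langle a,b\rangle$ act by left multiplication on its Cayley tree (cocompact, free, no inversions, trivial hence amenable edge stabilizers), and let $e=\{1,a\}$, so $\H=\{1\}$ and the Schreier graph is the Cayley graph. The cosets $\{ba^{-1}\}$ and $\{b\}$ are adjacent and neither equals $\H$, yet $(ba^{-1})^{-1}e=\{ab^{-1},ab^{-1}a\}$ lies in the half-tree of $a$ while $b^{-1}e=\{b^{-1},b^{-1}a\}$ lies in the half-tree of $1$. Your claimed dictionary ``initial syllable of $g$ $=$ initial direction of the orbit edge out of $e$'' is precisely backwards: the direction of $g^{-1}e$ out of $e$ is governed by the \emph{terminal} syllable of $g$, which is exactly what right multiplication by generators alters freely.

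The statement you are after (a splitting over $\H$ forces the Schreier graph of $\H$ to be multi-ended, i.e.\ $e(\G,\H)\geq 2$) is classically true, and your route can be repaired by changing the invariant: instead of the side of the coset's edge, track the side of the \emph{translated basepoint edge}, namely the function sending $\H g$ to the truth value of $ge\subset T_1$. This is well defined because $\H$ fixes $e$ without inversion and hence preserves both half-trees setwise, and its value can change along an edge $\H g\sim\H gs$ only when $g^{-1}e$ lies on the finite geodesic of edges separating $e$ from $se$; so the cut is a \emph{finite set} of cosets (in general not the single vertex $\H$ --- that reduction only works for generating sets adapted to the splitting, as in the proofs of~\Cref{prop:amalgamated_free_ET} and~\Cref{prop:HNN_ET}). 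Combined with your correct observation that, after passing to the minimal subtree, the orbit $\G e$ meets both half-trees infinitely, this yields multi-endedness and then~\Cref{cor:amenable_disconnections} applies. Absent this repair, the clean path is the paper's: reduce to the amalgam/HNN propositions; the finite-generation concern you raise about that reduction is a fair one, but it is how the paper proceeds, and it disappears entirely once your Schreier-graph argument is fixed as above.
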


\begin{proof}
    By the structure theorem of groups acting on trees, $\G$ may be realized as the fundamental group of a finite graph of groups with amenable edges. As no vertex is fixed by $\G$, we can also observe that $\G$ is not a vertex group of this graph of groups. But since fundamental groups of graphs of groups are built inductively by amalgamated free products and HNN-extensions, it follows that either $\G$ is an HNN-extension over an amenable subgroup, or it is an amalgamated free product over an amenable subgroup. In this case, since $\G$ is not a vertex group of the graph of groups, this amenable subgroup must be a proper subgroup. We conclude with~\Cref{prop:amalgamated_free_ET} and~\Cref{prop:HNN_ET}.
\end{proof}

We remark that since edge stabilizers inject in vertex stabilizers, it follows from this result that if the infinite group $\G$ acts co-compactly by automorphisms without edge inversions and with amenable \emph{vertex} stabilizers on a tree, then $\G$ is extraterrestrial. Indeed, if a vertex is universally fixed, then $\G$ is infinite and amenable and we conclude by \Cref{prop:AmenableET}, and if not, then $\G$ verifies the condition of \Cref{prop:action_on_tree}. We also note that the proof of~\Cref{prop:action_on_tree} also shows that fundamental groups of graphs of groups with at least one amenable edge that embeds properly, are extraterrestrial.

An immediate corollary of~\Cref{prop:action_on_tree} is the following.

A \emph{generalized Baumslag-Solitar group} is a group that has a faithful cocompact action on a tree by automorphisms, without edge inversions and with infinite cyclic edge and vertex stabilizers.

\begin{cor}\label{cor:BS_has_ET}
Generalized Baumslag-Solitar groups are extraterrestrial.
\end{cor}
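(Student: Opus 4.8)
The plan is to deduce this directly from \Cref{prop:action_on_tree}, which already packages the substantive work; the corollary is simply its specialization to cyclic stabilizers. By definition, a generalized Baumslag-Solitar group $\G$ comes equipped with a faithful, cocompact action by automorphisms without edge inversions on a tree $T$, all of whose edge and vertex stabilizers are infinite cyclic. First I would record that $\G$ is finitely generated: since the action is cocompact, the quotient $T/\G$ is a finite graph, and by the structure theorem of groups acting on trees \cite[\textsection 5, Theorem 13]{Serre1980} (already invoked in the proof of \Cref{prop:action_on_tree}) the group $\G$ is the fundamental group of a finite graph of groups whose vertex groups are the stabilizers, each $\cong \Z$; finitely many finitely generated vertex groups together with finitely many stable letters generate $\G$. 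Since $\Z$ is amenable, every edge stabilizer of the action is amenable.

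Next I would verify the remaining hypothesis of \Cref{prop:action_on_tree}, namely that no vertex is fixed by all of $\G$, by splitting into two cases exactly as in the remark following that proposition. If $\G$ fixes some vertex $v$, then $\G = \Stab(v)$ is infinite cyclic, hence infinite and amenable, and \Cref{prop:AmenableET} gives that $\G$ is extraterrestrial. Otherwise no vertex is universally fixed, and then $\G$ satisfies every hypothesis of \Cref{prop:action_on_tree}, being finitely generated, acting cocompactly by automorphisms without edge inversions on a tree, with no globally fixed vertex and with amenable (infinite cyclic) edge stabilizers, so that proposition yields that $\G$ is extraterrestrial. In either case the conclusion holds.

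There is essentially no obstacle here, since the statement is a direct specialization of \Cref{prop:action_on_tree}; the only point requiring a moment's care is the disjunction on whether $\G$ fixes a vertex, which is dispatched by \Cref{prop:AmenableET}. One can even check that the fixed-vertex case never actually arises, since if $\G = \Stab(v) \cong \Z$ then the infinite cyclic edge stabilizers have finite index in $\Z$, forcing each edge orbit to be finite and hence, by cocompactness, forcing $T$ to be a finite tree on which the infinite group $\Z$ cannot act faithfully; but this refinement is not needed, as the amenable case already covers it.
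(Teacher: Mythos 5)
Your proof is correct and follows essentially the same route as the paper, which derives the corollary directly from~\Cref{prop:action_on_tree} together with the remark preceding it (amenable vertex stabilizers: if a vertex is universally fixed, conclude via~\Cref{prop:AmenableET}; otherwise apply the proposition). Your additional observations — the finite generation check via the structure theorem and the fact that the fixed-vertex case cannot actually occur for a faithful cocompact action — are accurate refinements of the same argument.
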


\subsection{Groups quasi-isometric to the hyperbolic plane}

In this section we show that any finitely generated group which is quasi-isometric to the hyperbolic plane is extraterrestrial. We will first provide a direct proof.

\begin{definition}\label{def:pentagonmodel}
The \emph{pentagon model} is the graph with vertices $\Z^2$, and edges
\[ \{\{ (m, n), (m+1, n) \} : m, n \in \Z \} \cup \{ \{ (m, n), (2m, n-1) \} : m, n \in \Z\}. \]
\end{definition}

\begin{figure}[ht!]
    \centering
    \begin{tikzpicture}[scale = 0.8]
   \draw[thick, gray] (-8,0) -- (-8,+1);
   \draw[thick, gray] (0,0) -- (0,+1);
   \draw[thick, gray] (8,0) -- (8,+1);
   \draw[thick, gray] (-8,0) -- (-10,0);
   \draw[thick, gray] (-8,-2) -- (-9,-2);
   \draw[thick, gray] (-8,-3.5) -- (-8.5,-3.5);
   \draw[thick, gray] (-8,-4.25) -- (-8.3,-4.25);
   \draw[thick, gray] (8,0) -- (10,0);
   \draw[thick, gray] (8,-2) -- (9,-2);
   \draw[thick, gray] (8,-3.5) -- (8.5,-3.5);
   \draw[thick, gray] (8,-4.25) -- (8.3,-4.25);
    \foreach \i in {-8,-4,0,4}{
        \draw[thick] (\i,0) rectangle (\i+4,-2);
        \draw[fill = black] (\i,0) circle (0.1);
        \draw[fill = black] (\i+4,0) circle (0.1);
        }
    \foreach \i in {-8,-6,-4,-2,0,2,4,6}{
        \draw[thick] (\i,-2) rectangle (\i+2,-3.5); 
        \draw[fill = black] (\i,-2) circle (0.1);
        \draw[fill = black] (\i+2,-2) circle (0.1);
    }
    \foreach \i in {-8,...,7}{
        \draw[thick] (\i,-3.5) rectangle (\i+1,-4.25); 
        \draw[fill = black] (\i,-3.5) circle (0.1);
        \draw[fill = black] (\i+1,-3.5) circle (0.1);
    }
    \foreach \i in {-16,...,16}{
        \draw[thick, gray] (\i/2,-4.25) -- (\i/2,-4.6); 
        \draw[fill = black] (\i/2,-4.25) circle (0.1);
    }
    \end{tikzpicture}
    \caption{A finite portion of the pentagon model.}
    \label{fig:Pentagon_model}
\end{figure}
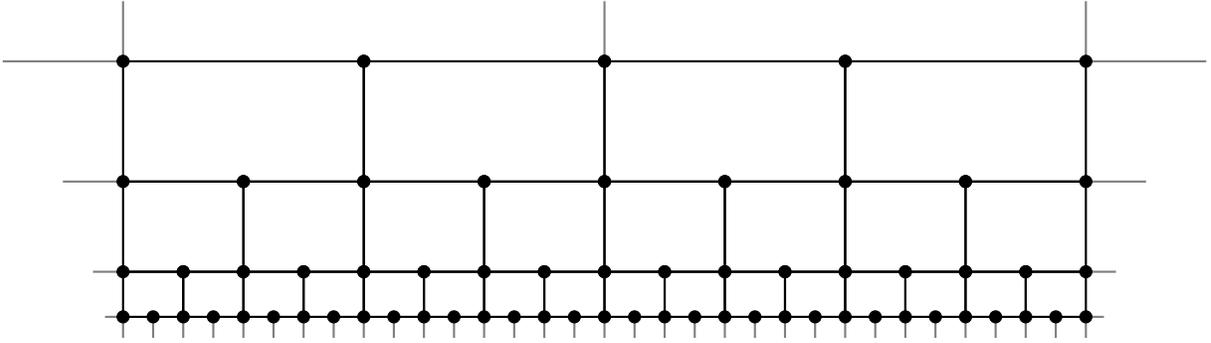

In~\Cref{fig:Pentagon_model} a representation of the pentagon model is shown. It is well known that the pentagon model is quasi-isometric to the hyperbolic plane $\mathcal{H}= \{z \in \C : \operatorname{Im}(z)>0\}$ with the usual hyperbolic metric.  An explicit quasi-isometry from the vertices of the pentagon model to $\mathcal{H}$ is given by \[ (m,n) \mapsto m2^n+2^ni.  \]

\begin{lemma}\label{lem:pentagon_is_ET}
    The pentagon model is extraterrestrial.
\end{lemma}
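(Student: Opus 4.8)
The plan is to imitate the grid construction of the $\Z^2$-example, but to use a segment of the central vertical column $\{(0,n):n\in\Z\}$ of the pentagon model as the separating ``wall'' $F$, rather than a horocyclic plate. The decisive structural feature I would isolate first is that the first coordinate of a vertex changes sign only when a path traverses a horizontal edge incident to a vertex of the form $(0,n)$: a horizontal edge changes the first coordinate by $\pm 1$, whereas a descending edge $(x,n)\to(2x,n-1)$ and an ascending edge $(x,n)\to(x/2,n+1)$ both preserve its sign. Consequently, deleting a long enough piece of the central column forces any path crossing from the ``negative half'' to the ``positive half'' to traverse many \emph{levels}, and this level gap is exactly the detour the definition of a UFO requires. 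A segment of this column behaves like a geodesic segment rather than a horocyclic one — its length grows linearly, not logarithmically, with the forced detour — so the separator $F$ will have size only proportional to $r$, which is precisely what lets the bound $|U|\ge m|F|$ hold with $k$ depending on $m$ alone.

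Concretely, given $m\ge 1$ and $r\ge 1$, I would set $F=\{(0,n):1-r\le n\le 3r\}$ and take $U$ and $O$ to be the points just to the left and right of the column in the middle band of levels, namely $U=\{(-i,n):1\le i\le 2m,\ 1\le n\le 2r\}$ and $O=\{(i,n):1\le i\le 2m,\ 1\le n\le 2r\}$. These three sets are pairwise disjoint. The map $(-i,n)\mapsto(i,n)$ is a bijection from $U$ to $O$, and the horizontal path $(-i,n),(-i+1,n),\dots,(i,n)$, which passes through $(0,n)\in F$ as is allowed for matching paths, has length $2i\le 4m$; hence this is a complete matching by paths of length at most $k:=4m$, a bound independent of $r$. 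Moreover $|U|=2m\cdot 2r=4mr$ while $|F|=4r$, so $|U|\ge m|F|$.

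The step I expect to be the crux is the separation condition, and it is where the structural observation pays off. A path from $U$ to $O$ reverses the sign of the first coordinate, so by the remark above it must visit some vertex $(0,n')$; to avoid $F$ it must satisfy $n'\notin[1-r,3r]$, i.e.\ $n'\ge 3r+1$ or $n'\le -r$. Since every vertex of $U$ sits on a level $n\in[1,2r]$, moving from level $n$ to level $n'$ costs at least $|n'-n|\ge r+1$ ascending or descending edges, so any $F$-avoiding path from $U$ to $O$ has length at least $r+1>r$. This exhibits $(U,F,O)$ as an $(m,4m,r)$-UFO, and because $k=4m$ is independent of $r$, the pentagon model is extraterrestrial. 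The only genuinely delicate points are the sign-change observation and the bookkeeping ensuring that $F$ overhangs $U$ and $O$ by $r$ levels at both ends; the matching and the cardinality estimate are routine.
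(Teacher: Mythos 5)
Your proof is correct and takes essentially the same route as the paper's: both choose $F$ to be a segment of the central column $\{(0,n):n\in\Z\}$, match $U$ and $O$ by horizontal paths whose length is $O(m)$ independently of $r$, and obtain the separation condition from exactly your two observations, namely that the scaling edges preserve the sign of the first coordinate (so any crossing path must visit the column) and that every edge changes the level by at most one (so avoiding $F$ forces a detour of length at least $r$). The only differences are bookkeeping constants: the paper uses strips of width $3m$ at levels $[0,r)$ with $F$ spanning $[-r,2r-1]$, yielding an $(m,6m+1,r)$-UFO, versus your width-$2m$ strips and $(m,4m,r)$-UFO.
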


\begin{proof}
 Let $m,r \in \N$ be arbitrary. An $(m,6m+1,r)$-UFO is given by 
 \begin{align*}
     U & = \{ (h,v) \in \Z^2 :  -3m \leq h < 0 \mbox{ and } 0\leq v < r\},\\
     F & = \{ (0,v) \in \Z^2 :  -r \leq v \leq 2r-1\},\\
     O & = \{ (h,v) \in \Z^2 :  0 < h \leq 3m \mbox{ and } 0\leq v < r\}.
 \end{align*}
Indeed, we have $|U|= 3mr$ and $|F| = 3r$, thus $|U|\geq m|F|$. A matching by paths of length at most $6m+1$ is given by $M = \{\bigl((-h,v),(h,v)\bigr)\in U \times O : 0 < h \leq 3m \mbox{ and } 0\leq v < r\}$, the paths are given by just increasing the first coordinate one by one. Finally, notice that any path from $U$ to $O$ must necessarily contain a vertex with horizontal coordinate $0$. If said path avoids $F$, then this vertex necessarily has vertical coordinate either smaller than $-r$ or larger than $m+r$. As each edge in this graph modifies the vertical coordinate at most by one, said path has length at least $r$.
\end{proof}

\begin{proposition}
    Let $\G$ be a finitely generated group which is quasi-isometric to the hyperbolic plane $\mathcal{H}$. Then $\G$ is extraterrestrial.
\end{proposition}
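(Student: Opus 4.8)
The plan is to combine the three ingredients already in place: that the pentagon model is extraterrestrial (\Cref{lem:pentagon_is_ET}), that being extraterrestrial is a quasi-isometry invariant among bounded-degree graphs (\Cref{mainthm:property_ET_QI_invariant}), and that the pentagon model is quasi-isometric to $\mathcal{H}$ via the explicit map $(m,n)\mapsto m2^n+2^ni$ recalled above. First I would fix a finite symmetric generating set $S$ of $\G$, so that $\Cay(\G,S)$ is a connected graph of bounded degree. I would also note that the pentagon model is itself a connected graph of bounded degree: each vertex $(m,n)$ has at most the two horizontal neighbours $(m\pm 1,n)$, the descendant $(2m,n-1)$, and the ancestor $(m/2,n+1)$ present only when $m$ is even, so its degree is at most four.

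The crux of the argument is to convert the hypothesis that $\G$ is quasi-isometric to $\mathcal{H}$ into a quasi-isometry between the two graphs $\Cay(\G,S)$ and the pentagon model, so that \Cref{mainthm:property_ET_QI_invariant} applies. For this I would invoke the standard fact that quasi-isometry is an equivalence relation on metric spaces: every quasi-isometry admits a quasi-inverse, and the composition of two quasi-isometries is again a quasi-isometry, with constants depending only on the original ones. Since $\Cay(\G,S)$ is quasi-isometric to $\mathcal{H}$ via some map $f$, and the pentagon model is quasi-isometric to $\mathcal{H}$ via the explicit map $g$ above, composing $f$ with a quasi-inverse $\bar{g}$ of $g$ yields a map $\bar{g}\circ f\colon \G \to \Z^2$ that realizes a quasi-isometry between $\Cay(\G,S)$ and the pentagon model in the precise sense of the paper's definition.

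Once this is established, the conclusion is immediate. The pentagon model is extraterrestrial by \Cref{lem:pentagon_is_ET}, the property is preserved under quasi-isometry between bounded-degree graphs by \Cref{mainthm:property_ET_QI_invariant}, and therefore $\Cay(\G,S)$ is extraterrestrial. By definition this means that $\G$ is an extraterrestrial group, which is exactly what we want.

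The only genuinely delicate point is the passage through the non-discrete space $\mathcal{H}$: \Cref{mainthm:property_ET_QI_invariant} is phrased for graphs, whereas $\mathcal{H}$ is a geodesic metric space, so one must check that the composed map $\bar{g}\circ f$ really does satisfy the quasi-isometric-embedding and relative-density inequalities with respect to the graph metrics $d_S$ and the pentagon-model metric, rather than with respect to the hyperbolic metric. This is routine once one has the composition and quasi-inverse facts; I would simply record explicit (possibly unoptimized) constants for $\bar{g}\circ f$ and observe that both graphs have bounded degree, so that all hypotheses of \Cref{mainthm:property_ET_QI_invariant} are met.
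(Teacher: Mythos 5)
Your proposal is correct and follows essentially the same route as the paper: both pass through the chain $\Cay(\G,S) \sim \mathcal{H} \sim$ pentagon model, using that quasi-isometry is an equivalence relation, and then conclude via \Cref{mainthm:property_ET_QI_invariant} and \Cref{lem:pentagon_is_ET}. Your additional remarks on quasi-inverses, composition constants, and bounded degree are just a more explicit spelling-out of what the paper treats as standard.
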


\begin{proof}
    Let $S$ be a generating set for $\G$. As both the pentagon model and $\Cay(\G,S)$ are quasi-isometric to $\mathcal{H}$, it follows that the pentagon model is quasi-isometric to $\Cay(\G,S)$. By~\Cref{mainthm:property_ET_QI_invariant} and~\Cref{lem:pentagon_is_ET} we deduce that $\Cay(\G,S)$ is extraterrestrial.
\end{proof}

A Fuchsian group is a discrete subgroup of $\mathrm{PSL}(2,\R)$. These groups act faithfully on the hyperbolic plane by M\"obius transformations. In particular, when they act cocompactly, they give a natural class of groups which are finitely generated and quasi-isometric (by the Schwarz-Milnor lemma) to $\mathcal{H}$. We obtain the following corollary

\begin{cor}\label{cor:Fuchsian_ET}
    Every cocompact Fuchsian group is extraterrestrial.
\end{cor}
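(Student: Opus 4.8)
The plan is to reduce this corollary immediately to the preceding proposition, which already establishes that any finitely generated group quasi-isometric to $\mathcal{H}$ is extraterrestrial. Thus the only work is to verify that a cocompact Fuchsian group satisfies the hypothesis of that proposition, namely that it is finitely generated and quasi-isometric to $\mathcal{H}$. The natural tool for this is the Schwarz--Milnor (Milnor--\v{S}varc) lemma, which states that if a group acts properly discontinuously and cocompactly by isometries on a proper geodesic metric space, then the group is finitely generated and quasi-isometric to that space.

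First I would check each hypothesis of Schwarz--Milnor for the action of a cocompact Fuchsian group $\G \leqslant \mathrm{PSL}(2,\R)$ on $\mathcal{H}$. The action is by isometries, since M\"obius transformations preserve the hyperbolic metric on $\mathcal{H}$. Proper discontinuity of the action follows from the discreteness of $\G$ as a subgroup of $\mathrm{PSL}(2,\R)$, which acts properly on $\mathcal{H}$; cocompactness is part of the hypothesis. Finally, $\mathcal{H}$ with the hyperbolic metric is a proper geodesic metric space. Having verified these conditions, Schwarz--Milnor yields that $\G$ is finitely generated and that $\G$ is quasi-isometric to $\mathcal{H}$.

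With $\G$ now known to be finitely generated and quasi-isometric to the hyperbolic plane, the previous proposition applies directly and gives that $\G$ is extraterrestrial, completing the proof. There is essentially no obstacle here: the entire content is the citation of Schwarz--Milnor to supply quasi-isometry to $\mathcal{H}$, after which the geometric invariance of the extraterrestrial property (\Cref{mainthm:property_ET_QI_invariant}) together with \Cref{lem:pentagon_is_ET}, already packaged into the preceding proposition, does all the work. The only minor point to state cleanly is that discreteness of a Fuchsian group is exactly what guarantees the proper discontinuity needed to invoke Schwarz--Milnor.
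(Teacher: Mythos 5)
Your proposal is correct and follows essentially the same route as the paper: the paper also notes that a cocompact Fuchsian group acts cocompactly (and, being discrete, properly discontinuously) by isometries on $\mathcal{H}$, invokes the Schwarz--Milnor lemma to get finite generation and a quasi-isometry to $\mathcal{H}$, and then applies the preceding proposition. Your version merely spells out the hypotheses of Schwarz--Milnor more explicitly, which is a fine elaboration but not a different argument.
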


Another interesting class of groups which we can completely classify are fundamental groups of surfaces. In the case of surfaces which are not closed (that is, either they are not compact or they have boundary), the fundamental group is free and thus is extraterrestrial by~\Cref{prop:MultiEndedET}. For closed surfaces we may discard the case of non-orientable surfaces, as they admit an orientable 2-cover and thus their fundamental groups are commensurable to the fundamental group of an orientable surface. Therefore, the only remaining case is when the surface is closed and orientable and the only remaining parameter is the genus $g$. If $g \leq 1$ the fundamental group is amenable and thus is extraterrestrial by~\Cref{prop:AmenableET}. Finally, if $g\geq 2$ the fundamental group is given by \[ \pi_1(\Sigma_g)=\langle a_1,b_1,...,a_g,b_g\;\mid\;\;[a_1,b_1]\ldots [a_g,b_g]=1 \rangle.  \]
See for instance~\cite[Chapter 4]{massey1977algebraic}. These groups are all commensurable for $g \geq 2$ and quasi-isometric to the hyperbolic plane, thus by~\Cref{lem:pentagon_is_ET} we obtain the following corollary.

\begin{cor}\label{cor:surfaces}
    The fundamental group of any 2-manifold is extraterrestrial.
\end{cor}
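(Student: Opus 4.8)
The plan is to run through the classification of surfaces and reduce each case to one of the three sufficient conditions already at our disposal: being multi-ended (\Cref{prop:MultiEndedET}), being infinite amenable (\Cref{prop:AmenableET}), or being quasi-isometric to a graph we already know is extraterrestrial, via the quasi-isometry invariance of \Cref{mainthm:property_ET_QI_invariant} together with \Cref{lem:pentagon_is_ET}. The guiding principle is that extraterrestriality only depends on the quasi-isometry type of the group, and that commensurable groups are quasi-isometric (a finite-index subgroup is quasi-isometric to the ambient group through inclusion, and Cayley graphs of finitely generated groups have bounded degree). So it suffices, surface by surface, to pin down the quasi-isometry type of its fundamental group. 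I would split the argument into three cases: (i) surfaces that are not closed; (ii) closed non-orientable surfaces; (iii) closed orientable surfaces, organized by genus.

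First I would treat the non-closed finite-type surfaces (non-compact or with non-empty boundary, but with finitely generated fundamental group). Such a surface deformation retracts onto a finite graph, so its fundamental group is a free group of finite rank; provided this rank is at least one, the group is multi-ended (infinitely many ends when the rank is $\geq 2$, two ends when it is $\Z$), and \Cref{prop:MultiEndedET} applies. For closed non-orientable surfaces I would pass to the orientable double cover: the fundamental group of the cover sits inside as an index-$2$ subgroup, so the two groups are commensurable and hence quasi-isometric. By \Cref{mainthm:property_ET_QI_invariant} it then suffices to know that the fundamental group of the orientable cover is extraterrestrial, which is handled in case (iii).

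It remains to handle the closed orientable surfaces $\Sigma_g$. For $g = 1$ we have $\pi_1(\Sigma_1) = \Z^2$, which is infinite amenable, so \Cref{prop:AmenableET} applies. For $g \geq 2$ the surface group $\pi_1(\Sigma_g) = \langle a_1,b_1,\dots,a_g,b_g \mid [a_1,b_1]\cdots[a_g,b_g]=1\rangle$ acts cocompactly and properly by isometries on the hyperbolic plane $\mathcal{H}$, so by the Schwarz--Milnor lemma it is quasi-isometric to $\mathcal{H}$, and therefore to the pentagon model. Combining \Cref{lem:pentagon_is_ET} with \Cref{mainthm:property_ET_QI_invariant} shows $\pi_1(\Sigma_g)$ is extraterrestrial, which also completes case (ii) for the surfaces $N_k$ with $k\geq 3$ (double cover $\Sigma_{k-1}$ of genus $\geq 2$) and, through the torus, for the Klein bottle.

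The one genuinely delicate point — and where I expect the statement to require a caveat rather than an argument — is the low-complexity degenerate cases. The simply connected surfaces ($S^2$, and the plane or disk) and the projective plane $\mathbb{RP}^2$ have \emph{finite} fundamental group, and a finite group cannot be extraterrestrial: condition (1) in the definition of a UFO demands $|U|\geq m|F|$ for arbitrarily large $m$, which forces $|U|$ to grow without bound while the entire vertex set stays bounded. Likewise, infinite-type surfaces have infinitely generated fundamental group, for which the notion is not even defined. Thus the clean reading is that the corollary concerns surfaces whose fundamental group is finitely generated and infinite; under that reading every remaining surface falls into exactly one of the cases (i)--(iii), and the only remaining work is the routine bookkeeping of checking the claimed quasi-isometry type in each case.
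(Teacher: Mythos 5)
Your argument is correct and is essentially the paper's own proof: the same three-way split into non-closed surfaces (free fundamental group, hence multi-ended, \Cref{prop:MultiEndedET}), closed non-orientable surfaces (reduced to the orientable double cover via commensurability and the quasi-isometry invariance of \Cref{mainthm:property_ET_QI_invariant}), and closed orientable surfaces ($g\leq 1$ amenable via \Cref{prop:AmenableET}, $g\geq 2$ quasi-isometric to the hyperbolic plane and hence to the pentagon model by \Cref{lem:pentagon_is_ET}). Your closing caveat about the degenerate cases ($S^2$, $\mathbb{RP}^2$, the disk, infinite-type surfaces) is a fair reading that the paper tacitly adopts as well, since \Cref{prop:AmenableET} only covers infinite finitely generated groups and the paper's case analysis never addresses finite or infinitely generated fundamental groups.
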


\subsection{An amenable cut in the surface group}

We now give an alternative proof that the fundamental group of a closed orientable surface of genus $g = 2$ (from now on, the surface group) is extraterrestrial. This has the advantage of showing explicitly that this group admits an amenable cut in the sense of~\Cref{cor:amenable_disconnections}. Before giving a proof, let us outline how one can ``see this immediately'' in the previous proof. Surface groups are hyperbolic, and in a hyperbolic group $\G$, every element of infinite order will define a quasi-geodesic, i.e. $n \mapsto g^n$ is a quasi-isometric embedding of $\mathbb{Z}$ into $\G$, see~\cite[Corollary 3.10]{Bridson1999}.

Thus, if we pick any torsion-free element $g$, then $\langle g \rangle$ induces a quasi-geodesic in the pentagon model. It is easy to convince oneself that such a quasi-geodesic must roughly cut the space into two parts. Now one just has to find elements of the group such that right translation by them moves one respectively to the left and right side of the path. It is easy to convince oneself that these exist (one may also construct this explicitly using the Cayley graph depicted in~\cite{AubBarMou2019_DPsurface} and perform an analogous argument directly there).

For the precise proof, we essentially directly obtain the statement from the literature, though it takes a bit of work to translate everything to our setting. Specifically, we will use a theorem from \cite{scott1977endspairs}. Thus, we need to recall the definition of ends used in \cite{scott1977endspairs}, as it is slightly different from ours. Let $\G$ be a group and $C$ a subgroup. Let $C\backslash \G$ denote the quotient of $\G$ by the left action of $C$. Then $\G$ acts on the right on $C\backslash \G$. Define $\overline{\Z_2}(C\backslash \G)$ as the $\Z_2$-vector space of subsets of $C\backslash \G$ under symmetric difference, and $\Z_2(C\backslash \G)$ as the subspace of vectors with finite support (i.e.\ the support consists of finitely many cosets). Define now $E(C\backslash \G) = \overline{\Z_2}(C\backslash \G)/\Z_2(C\backslash \G)$ and let $C^0(\G, E(C\backslash \G)) = E(C\backslash \G)$ and $C^1(\G, E(C\backslash \G)) = \{\phi : \G \to E(C \backslash \G)\}$ be the usual cochain groups. Take the canonical map $\partial_1 : C^0(\G, E(C\backslash \G)) \to C^1(\G, E(C\backslash \G))$ where for $\phi \in E(\G,C)$ and $g \in \G$ we have \[\partial_1(\phi)(g) = g\phi \cdot \phi.\]

Set the $0$-th homology group as $H^0(\G,E(C\backslash \G))= \operatorname{ker}(\partial_1).$

\begin{definition}
$e(\G, C)$ is the dimension, as a $\Z_2$-vector space, of the group $H^0(\G, E(C\backslash \G))$.
\end{definition}

\begin{lemma}
$e(\G, C)$ is the number of ends of the Schreier graph of $\G$ with respect to $C$.
\end{lemma}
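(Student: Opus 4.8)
The plan is to give $H^0(\G, E(C\backslash\G))$ a concrete combinatorial meaning: it is the $\Z_2$-vector space of subsets of the vertex set of the Schreier graph $X=\Sch(\G,C,S)$ (for any finite symmetric generating set $S$) that have finite edge-boundary, taken modulo finite sets. Once this identification is in place, the statement reduces to the classical fact that for a connected, locally finite graph this space has dimension equal to the number of ends. First I would unwind the definition of $H^0$: since every coefficient group here is over $\Z_2$, the condition $\partial_1(\phi)(g)=g\phi\cdot\phi=0$ for all $g$ means exactly $g\phi=\phi$ in $E(C\backslash\G)$, so $H^0(\G,E(C\backslash\G))$ is the fixed-point space $E(C\backslash\G)^{\G}$. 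Lifting to $\overline{\Z_2}(C\backslash\G)$, a class $[\phi]$ is $\G$-fixed precisely when a representative subset $\phi\subseteq C\backslash\G$ satisfies $\phi\triangle(\phi\cdot g)$ finite for every $g\in\G$, i.e.\ $\phi$ is \emph{almost invariant}.

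The crucial translation step is to show that $\phi$ is almost invariant if and only if it has finite edge-boundary in $X$. For a generator $s\in S$, a vertex $Cx$ lies in the symmetric difference of $\phi$ and its $s$-translate exactly when precisely one of $Cx$ and $Cxs$ belongs to $\phi$, which is the same as saying the edge $\{Cx,Cxs\}$ of $X$ joins $\phi$ to its complement. Hence the translate differs from $\phi$ by a finite set for every $s\in S$ iff only finitely many edges of $X$ cross between $\phi$ and $\phi^{c}$; here I use that $S$ is finite and symmetric, so the two possible translation conventions agree. Almost invariance under all generators propagates to arbitrary $g=s_1\cdots s_n$ because $\phi\triangle(\phi\cdot g)$ is contained in a union of $n$ sets, each a translate of some $\phi\triangle(\phi\cdot s_i)$ and hence of the same finite cardinality. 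This realizes $H^0(\G,E(C\backslash\G))$ as $Q(X)/F(X)$, where $Q(X)$ is the space of finite-boundary subsets and $F(X)\subseteq Q(X)$ the finite subsets.

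Finally I would prove $\dim_{\Z_2}Q(X)/F(X)=e(X)$, the number of ends of $X$, which (as in the group case recalled earlier) equals $\sup_{T}$ of the number of infinite connected components of $X\setminus T$ over finite vertex sets $T$. For the lower bound, given a finite $T$ the infinite components $D_1,\dots,D_k$ of $X\setminus T$ are pairwise disjoint infinite sets with finite boundary, so no nontrivial symmetric difference among them is finite; thus their classes are linearly independent and $\dim Q(X)/F(X)\ge k$, giving $\dim\ge e(X)$ after taking the supremum. For the upper bound (when $e(X)<\infty$), fix a finite $T_0$ realizing $e(X)$ and note that the number of infinite components is monotone non-decreasing as $T$ grows; for any $A\in Q(X)$ choose $T\supseteq T_0$ containing the endpoints of the (finite) boundary of $A$, so that $A$ coincides up to a finite set with a union of infinite components of $X\setminus T$, and since the count has already stabilized at $e(X)$ each such component is cofinite in one of the $e(X)$ infinite components of $X\setminus T_0$; hence $[A]$ lies in the span of their classes and $\dim\le e(X)$.

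I expect the combinatorial identity $\dim_{\Z_2}Q(X)/F(X)=e(X)$, and in particular the stabilization argument for the upper bound together with the degenerate case $e(X)=\infty$ (where both sides are infinite), to be the main obstacle; it is a standard fact about ends of locally finite graphs, so an acceptable alternative is to cite it directly. By contrast, the passage from $H^0$ to almost-invariant sets and the $\Z_2$-duality between almost invariance and finiteness of the edge-boundary are routine once the correspondence between generator-translates and Schreier edges is set up.
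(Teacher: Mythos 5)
Your proposal is correct and takes essentially the same route as the paper: both identify $H^0(\G, E(C\backslash \G))$ with the almost-invariant (equivalently, finite-edge-boundary) subsets of the Schreier graph modulo finite sets, get the lower bound from characteristic functions of the infinite components left after deleting a finite vertex set, and get the upper bound by observing that a finite-boundary set is, up to a finite set, a union of such components. Your explicit edge-boundary dictionary and the stabilization argument at a fixed $T_0$ realizing $e(X)$ are only minor bookkeeping variants of what the paper does implicitly with the sets $D_{i,s}$ and its count of linearly independent classes.
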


\begin{proof}
The elements $\phi \in H^0(\G, E(C\backslash \G))$ are precisely the elements of $E(C\backslash \G)$ which satisfy that for all $g \in \G$ then $g\phi \cdot \phi = \phi$. That is, it is the vector space of subsets of $C\backslash \G$ (modulo finite differences) which are invariant under right translation (modulo finite differences). 

Take some finite symmetric generating set $S$ of $\G$ and consider the Schreier graph $\Gamma = \Sch(\G,C,S)$. Thus in terms of the Schreier graph, $e(\G,C)$ is the dimension of the vector space of subsets of the Schreier graph (modulo finite differences) which are invariant under right translation (modulo finite differences).

Suppose there are at least $k$ ends in $\Sch(\G,C,S)$, then we can find a finite subset of the Schreier graph $D \subset C\backslash \G$ such that $\Sch(\G,C,S)$ has at least $k$ infinite connected components after removing $D$. The characteristic functions of these components are elements of $H^0(\G, E(C\backslash \G))$ which are linearly independent, thus $e(\G,C) \geq k$. On the other hand, suppose that the dimension is at least $k$, and let $v_1, \ldots, v_k \in H^0(\G, E(C\backslash \G))$ be linearly independent. We can find a finite subset $D$ of $C\backslash \G$ outside which the $v_i$ are closed under $S$-translations (i.e, for $i\in \{1,\dots,k$ and $s \in S$ take $D_{i,s}$ as the finite set of elements where $v_i$ differs from $sv_i$ and then take $D$ as the union of the $D_{i,s}$). Thus, their supports must be formed out of connected components of $\Sch(\G,C,S)$ after removing $D$. For the dimension to be at least $k$, we need at least $k$ connected components.
\end{proof}

\begin{proposition}
\label{thm:SurfaceCut}
The surface group has an amenable cut.
\end{proposition}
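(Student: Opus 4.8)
The plan is to exhibit an explicit amenable subgroup $C\leqslant \G$ whose Schreier graph is multi-ended, and then invoke \Cref{cor:amenable_disconnections}. Recall that $\G=\pi_1(\Sigma_2)$ is torsion-free, so every nontrivial element has infinite order. I would take $C=\langle a_1\rangle\cong\Z$; being infinite cyclic it is abelian and hence amenable, so by \Cref{cor:amenable_disconnections} it suffices to prove that $\Sch(\G,C,S)$ has at least two ends for some (equivalently any) generating set $S$. The relevant feature of this choice is that $a_1$ is represented by a non-separating two-sided simple closed curve on $\Sigma_2$, and it is precisely this simplicity that will force the associated Schreier graph to be multi-ended.

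By the preceding lemma, the invariant $e(\G,C)$ equals the number of ends of $\Sch(\G,C,S)$, so it is enough to show $e(\G,C)\geq 2$. Here I would import the corresponding computation from \cite{scott1977endspairs}: for the fundamental group of a closed surface and the infinite cyclic subgroup generated by a non-peripheral two-sided simple closed curve, one has $e(\G,C)\geq 2$ (in fact $=2$). Geometrically the statement is transparent, and this is how I would sanity-check it. Since $a_1$ is simple, its axis in the hyperbolic plane $\mathcal{H}$, together with all of its $\G$-translates, is a family of pairwise disjoint geodesic lines; in particular the axis of $a_1$ separates $\mathcal{H}$ into two half-planes, each preserved by $C$. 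Passing to the quotient $C\backslash\mathcal{H}$, the compact core geodesic separates it into two unbounded pieces, giving exactly two ends. The Milnor--\v{S}varc quasi-isometry $\Cay(\G,S)\simeq\mathcal{H}$ is $\G$-equivariant, hence $C$-equivariant, and therefore descends to a quasi-isometry $\Sch(\G,C,S)\simeq C\backslash\mathcal{H}$; since the number of ends is a quasi-isometry invariant, this yields $e(\G,C)=2$.

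With $e(\G,C)\geq 2$ in hand, the Schreier graph $\Sch(\G,C,S)$ is multi-ended, and since $C$ is amenable, \Cref{cor:amenable_disconnections} shows that $\G$ admits an amenable cut, which is exactly the assertion of the proposition.

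The main obstacle I anticipate is not the geometry but the bookkeeping of matching \cite{scott1977endspairs} to the present framework: Scott works with the cohomological invariant $e(\G,C)=\dim_{\Z_2}H^0(\G,E(C\backslash\G))$, so one must verify that the hypotheses under which he computes this quantity (primitivity and two-sidedness of the curve, non-peripherality) are met by $C=\langle a_1\rangle$, and then lean on the preceding lemma to convert his value into the number of ends of $\Sch(\G,C,S)$ demanded by \Cref{cor:amenable_disconnections}. The one genuinely substantive point worth isolating is the role of simplicity: an arbitrary infinite-order generator is still cyclic and amenable, but it is the simplicity of the closed curve that produces an \emph{embedded} separating structure rather than merely a coset-almost-invariant set, and this is what lets us exhibit the cut explicitly and cite the clean statement.
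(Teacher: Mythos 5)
Your proof is correct and follows essentially the same route as the paper: both rest on Scott's Lemma 2.2 from \cite{scott1977endspairs} applied to an incompressible annulus (the paper takes $X$ to be an embedded annulus with two boundary components and $\pi_1(X)=\Z$; your $C=\langle a_1\rangle$ is exactly the fundamental group of the annulus neighborhood of the simple closed curve representing $a_1$), combined with the lemma identifying $e(\G,C)$ with the number of ends of $\Sch(\G,C,S)$, and then \Cref{cor:amenable_disconnections}. Two remarks. First, what you present as a sanity check is in fact a complete second proof: the orbit map $\Cay(\G,S)\to\mathcal{H}$ is a quasi-isometry equivariant for the left $C$-action, the Schreier metric is precisely the quotient metric of $d_S$ by that action, so the map descends to a quasi-isometry between $\Sch(\G,C,S)$ and the hyperbolic cylinder $C\backslash\mathcal{H}$, which has two ends; this is self-contained modulo routine verifications. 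Second, your closing claim that simplicity of the curve is the genuinely substantive point holds only for the citation route: Scott's lemma requires an \emph{embedded} incompressible subsurface, hence a simple closed curve. Your geometric argument uses no simplicity at all --- for any infinite-order $g$ in a cocompact Fuchsian group, $\langle g\rangle\backslash\mathcal{H}$ is a two-ended cylinder, so \emph{every} infinite cyclic subgroup yields an amenable cut, simple or not. Likewise, whether the chosen curve is separating or non-separating is irrelevant; your non-separating $a_1$ and the paper's annulus both give $e(\G,C)=2$.
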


\begin{proof}
Lemma 2.2 in \cite{scott1977endspairs} states that if $\G$ is the fundamental group of a closed surface $F$ and $C$ is the fundamental group of a compact, incompressible subsurface $X$ of $F$, then $e(\G, C)$ equals the number of boundary components of $X$. Recall that $e(\G, C)$ is just the number of ends of the Schreier graph of $C$, so it suffices to find an incompressible subsurface $X$ in the genus $2$ closed surface $F$, such that $X$ has two boundary components, and the fundamental group of $X$ is amenable. Here, \emph{incompressible} means that every boundary component does not bound a disc.

Such $X$ is shown in~\Cref{fig:cut}: there are clearly two boundary components, and $\pi_1(X) = \Z$.
\end{proof}

\begin{figure}[ht!]
    \centering
    \begin{tikzpicture}[scale = 1.5]
		\draw[thick, fill = black!10] (0,0) ellipse (1.8 and .9);
		\draw[thick, fill = black!10] (3.5,0) ellipse (1.8 and .9);
		\draw[fill = black!10, black!10] (0,-0.6) rectangle (3,0.6);
        \draw[thick] (1.3,0.6)--(2.2,0.6);
        \draw[thick] (1.3,-0.6)--(2.2,-0.6);
		\begin{scope}[scale=.8, shift={(-0.2,0)}]
		\path[rounded corners=24pt, fill = white] (-.9,0)--(0,.6)--(.9,0) (-.9,0)--(0,-.56)--(.9,0);
		\draw[rounded corners=28pt] (-1.1,.1)--(0,-.6)--(1.1,.1);
		\draw[rounded corners=24pt] (-.9,0)--(0,.6)--(.9,0);
		\end{scope}
		\begin{scope}[scale=.8, shift={(4.7,0)}]
		\path[rounded corners=24pt, fill = white] (-.9,0)--(0,.6)--(.9,0) (-.9,0)--(0,-.56)--(.9,0);
		\draw[rounded corners=28pt] (-1.1,.1)--(0,-.6)--(1.1,.1);
		\draw[rounded corners=24pt] (-.9,0)--(0,.6)--(.9,0);
		\end{scope}
        \begin{scope}[shift={(0.3,0)}]
        \draw[thick, dashed, fill = red!40, opacity = 0.2] (1.3,0.6) -- (1.8,0.6) .. controls (2,0) .. (1.8,-0.6) -- (1.3,-0.6) .. controls (1.5,0) .. cycle;
        \end{scope}
        \draw[thick, fill = red!40, opacity = 0.5] (1.3,0.6) -- (1.8,0.6) .. controls (1.6,0) .. (1.8,-0.6) -- (1.3,-0.6) .. controls (1.1,0) .. cycle;
        \node at (1.4,0) {$X$};
        \node at (-1,-0.5) {$F$};
		\end{tikzpicture}
    \caption{The cut $X$ used in~\Cref{thm:SurfaceCut}. }
    \label{fig:cut}
\end{figure}

\bibliographystyle{abbrv}
\bibliography{alexandria}

\Addresses

\end{document}